\documentclass[12pt]{article}
\usepackage[left=2.5cm,right=2.5cm,top=2.5cm,bottom=2.5cm,a4paper]{geometry}

\usepackage[a4paper]{geometry}
\usepackage{graphicx}
\usepackage{microtype}
\usepackage{siunitx}
\usepackage{booktabs}
\usepackage{cleveref}
\usepackage{graphics}
\usepackage{graphicx}
\usepackage{epsfig}
\usepackage{amsmath,amsfonts,amssymb,amsthm}
\usepackage{listings}
\usepackage{paralist}
\usepackage{sectsty}
\pagestyle{headings}

\numberwithin{equation}{section}
\date{}
\subsectionfont{\normalfont}

\newtheorem{theorem}{Theorem}[section]
\newtheorem{definition}[theorem]{Definition}
\newtheorem{lemma}[theorem]{Lemma}
\newtheorem{corollary}[theorem]{Corollary}

\newtheorem{example}[theorem]{Example}
\newtheorem{proposition}[theorem]{Proposition}

\newtheorem{open problem}[theorem]{Open problem}

\providecommand{\keywords}[1]{\textbf{\textit{Key words---}} #1}

\begin{document}

\markboth{Kyunghwan Song}
{The Frobenius problem for four numerical semigroups}

\title{The Frobenius problem for four numerical semigroups}

\author{Kyunghwan Song\\
Department of mathematics, Korea University, Seoul 02841, Republic of Korea\\
heroesof@korea.ac.kr}

\maketitle

\begin{abstract}
The greatest integer that does not belong to a numerical semigroup $S$ is called the Frobenius number of $S$ and finding the Frobenius number is called the Frobenius problem.  In this paper,  we introduce the Frobenius problem for numerical semigroups generated by Thabit number base b and Thabit number of the second kind base b which are motivated by the Frobenius problem for Thabit numerical semigroups. Also, we introduce the Frobenius problem for numerical semigroups generated by Cunningham number and Fermat number base $b$.
\end{abstract}

\keywords{Frobenius number; Frobenius problem; numerical semigroups; Thabit numerical semigroups; embedding dimension; Ap\'{e}ry set.}

{Mathematics Subject Classification 2010: 11A67, 20M30}

\section{Introduction}\label{sec_Introduction}
Let $\mathbb{N}$ be the set of nonnegative integers.
\begin{definition} \cite{Rosales2015,Rosales2009}
A {\em numerical semigroup} is a subset $S$ of $\mathbb{N}$ that is closed under addition, contains $0$ and $\mathbb{N}\texttt{\char`\\}S$ is finite. Given a nonempty subset $A$ of $\mathbb{N}$ we will denote by $\big<A\big>$ the submonoid of $(\mathbb{N},+)$ generated by $A$, that is,
\begin{displaymath}
\big<A\big> = \{\lambda_1 a_1 + \cdots + \lambda_n a_n | n \in \mathbb{N}\texttt{\char`\\}\{0\}, a_i \in A, \lambda_i \in \mathbb{N}
\end{displaymath}
\textrm{ for all } $i \in \{1,\cdots,n\}\}$.
\end{definition}
The greatest integer that does not belong to a numerical semigroup $S$ is called the Frobenius number of $S$ and is denoted by $F(S)$. In other words, the Frobenius number is the largest integer that cannot be expressed as a sum $\sum_{i=1}^n t_i a_i$, where $t_1, t_2, ..., t_n$ are nonnegative integers and $a_1, a_2, ..., a_n$ are given positive integers such that $\gcd(a_1, a_2, ..., a_n) = 1$. Finding the Frobenius number is called the Frobenius problem, the coin problem or the money changing problem. The Frobenius problem is not only interesting for pure mathematicians but is also connected with graph theory in \cite{Heap (1965), Hujter (1987)} and the theory of computer science in \cite{Raczunas1996}, as introduced in \cite{Owens2003}.
There are explicit formulas for calculating the Frobenius number when only two relatively prime numbers are present \cite{Sylvester1883}. Recently, semi-explicit formula \cite{Robles2012} and a special case \cite{Tripathi (2013)} for the Frobenius number for three relatively prime numbers are presented. And finally, completed explicit formula was presented in 2017\cite{Tripathi (2017)}.
\\
\\
F. Curtis proved in \cite{Curtis (1990)} that the Frobenius number for three or more relatively prime numbers cannot be given by a finite set of polynomials and Ram\'irez-Alfons\'in proved in \cite{Ramirez1996} that the problem is NP-hard. Currently, only algorithmic methods for determining the general formula for the Frobenius number of a set that has three or more relatively prime numbers in \cite{Beihoffer (2005), Bocker (2007)} exist. Some recent studies have reported that the running time for the fastest algorithm is $O(a_1)$, with the residue table in memory in \cite{Brauer (1962)} and $O(na_1)$ with no additional memory requirements in \cite{Bocker (2007)}.In addition, research on the limiting distribution in \cite{Shchur2009} and lower bound in \cite{Aliev (2007),Fel (2015)} of the Frobenius number were presented. From an algebraic viewpoint, rather than finding the general formula for three or more relatively prime numbers, the formulae for special cases were found such as the Frobenius number of a set of integers in a geometric sequence in \cite{Ong2008}, a Pythagorean triples in \cite{Gil (2015)} and three consecutive squares or cubes in \cite{Lepilov (2015)}. Recently, various methods for solving the Frobenius problem for numerical semigroups have been suggested in \cite{Aliev (2009),Rosales2000,Rosales2009,Rosales2004}, etc. In particular, a method for computing the Ap\'ery set and obtaining the Frobenius number using the Ap\'ery set is an efficient tool for solving the Frobenius problem of numerical semigroups as reported in \cite{Marquez (2015),Rosales2009,Ramirez2009}. Furthermore, in recent articles presenting the Frobenius problems for Fibonacci numerical semigroups in \cite{Marin (2007)}, Mersenne numerical semigroups in \cite{Rosales2017}, Thabit numerical semigroups in \cite{Rosales2015} and repunit numerical semigroups in \cite{Rosales2016}, this method is used to obtain the Frobenius number.
\\
\\
The Frobenius problem in the numerical semigroups \\
$\big<\{3\cdot 2^{n+i} - 1| i \in \{0,1,\cdots \}\}\big>$ for $n \in \{0,1,\cdots \}$ was presented in \cite{Rosales2015}. In \cite{Rosales2015}, the authors recall that a Thabit number $3 \cdot 2^n - 1$ and Thabit numerical semigroups $T(n) = \big< \{3 \cdot 2^{n+i} - 1 | i \in \{0,1,\cdots\}\}\big>$ for a nonnegative integer $n$ and they used the definition of the minimal system of generators for $T(n)$ as the smallest subset of $\big< \{3 \cdot 2^{n+i} - 1 | i \in \mathbb{N}\}\big>$ that equals $T(n)$. In \cite{Rosales2015}, the minimal system of generators for $T(n)$ is $\big< \{3 \cdot 2^{n+i} - 1 | i \in \{0,1,\cdots ,n+1\}\}\big>$, and it is unique. The embedding dimension is the cardinality of the minimal system of generators. By the minimality of the system $\big< \{3 \cdot 2^{n+i} - 1 | i \in \{0,1,\cdots ,n+1\}\}\big>$ for $T(n)$, the embedding dimension for $T(n)$ is $n+2$. For any set $S$ and $x \in S\texttt{\char`\\}\{0\}$, the Ap$\acute{e}$ry set was defined by $Ap(S,x) = \{s \in S | s - x \not\in S\}$. Let $s_i = 3 \cdot 2^{n+i} - 1$ for each nonnegative integer $i$. Then, the Ap\'{e}ry set is defined by $Ap(T(n),s_0) = \{s \in T(n) | s-s_0 \not \in T(n) \}$ for $s_0$. In \cite{Rosales2015}, $Ap(T(n),s_0)$ was described explicitly leading to a solution to the Frobenius problem.  Let $R(n)$ be the set of sequences $(t_1,\cdots,t_{n+1}) \in \{0,1,2\}^{n+1}$ that satisfy the following conditions:
\begin{enumerate}
\item $t_{n+1} \in \{0,1\}$,
\item If $t_j = 2$, then $t_i = 0$ for all $i < j \leq n$,
\item If $t_n = 2$, then $t_{n+1} = 0$,
\item If $t_n = t_{n+1} = 1$, $t_i = 0$ for all $1\leq i <n$.
\end{enumerate}
Then \cite{Rosales2015} concludes that $Ap(T(n),s_0) = \{t_1 s_1 + \cdots + t_{n+1} s_{n+1} | (t_1,\cdots, t_{n+1}) \in R(n)\}$. The Frobenius number of the numerical semigroups was presented by $F(S) = \max(Ap(S,x)) - x$ in \cite{Rosales2009} and therefore the Frobenius number of Thabit numerical semigroups is $s_n + s_{n+1} - s_0 = 9\cdot 2^{2n} - 3\cdot 2^n - 1$. Also, an extended result of \cite{Rosales2015} have been suggested in 2017 which dealt the numerical semigroups \\
$\big<\{(2^k - 1)\cdot 2^{n+i} - 1| i \in \{0,1,\cdots \}\}\big>$ for $n \in \{0,1,\cdots \}$ and $2 \leq k \leq 2^n$ \cite{Gu (2017)}. In other words, the coefficient $3$ in Thabit numerical semigroups was extended. Also, we suggested an extended result of \cite{Rosales2015} which extended the coefficients $3$ and $1$ in Thabit numerical semigroups. The form of numerical semigroup is as follows: $\big<\{(2^k + 1)\cdot 2^{n+i} - (2^k - 1)| i \in \{0,1,\cdots \}\}\big>$ for $n \in \{0,1,\cdots \}$ and $k \in \{1,2,\cdots\}$.
\\
The first principal purpose of this paper is to solve the Frobenius problem for numerical semigroups generated by Thabit number base $b$ defined by $\{(b+1)\cdot b^{n+i} - 1 | i \in \{0,1,\cdots \} \}$  for $n \in \{0,1,\cdots \}$  to generalize the result in Thabit numerical semigroups in \cite{Rosales2015}. We define the minimal system of generators and the Ap\'{e}ry set of Thabit numerical semigroups base $b$. In addition, we discuss the Frobenius number and genus in these numerical semigroups. The paper in \cite{Rosales2015} is the case of $b = 2$ in this paper. \\
\\
The second purpose of this paper is to solve the Frobenius problem for numerical semigroups generated by Thabit number of the second kind base $b$ defined by $\{(b+1)\cdot b^{n+i} + 1 | i \in \{0,1,\cdots \} \}$  for $n \in \{0,1,\cdots \}$ and $b \not\equiv 1 \pmod{3}$.
\\
\\
The third purpose of this paper is to solve the Frobenius problem for numerical semigroups generated by Cunningham number defined by $\{b^{n+i} + 1 | i \in \{0,1,\cdots \} \}$  for $n \in \{0,1,\cdots \}$ and even positive integer $b$.
\\
\\
The fourth purpose of this paper is to solve the Frobenius problem for numerical semigroups generated by Fermat number base $b$ defined by $\{(b^{b^{n+i}} + 1 | i \in \{0,1,\cdots \} \}$  for $n \in \{0,1,\cdots \}$.
\\
\\
The remainder of this paper is organized as follows: 
In Section 2, we compute the minimal system of generators and the embedding dimension for Thabit numerical semigroups base $b$.
In Section 3, a method for obtaining the Ap\'{e}ry set and Frobenius number, genus for Thabit numerical semigroups base $b$.
In Section 4, we suggest the results of minimal system of generators, embedding dimension, the Apery set, and Frobenius number of Thabit numerical semigroups of the second kind base $b$ without the proofs.
In Section 5, we suggest the results of minimal system of generators, embedding dimension, the Apery set, and Frobenius number of the Cunningham numerical semigroups without the proofs.
In Section 6, we suggest the results of minimal system of generators, embedding dimension, the Apery set, and Frobenius number of the Fermat numerical semigroups base $b$ without the proofs.
Some theorems and definitions essential to understanding this paper are provided below.
\begin{theorem}\cite{Rosales2015,Rosales2009}
$\big<A\big>$ is a numerical semigroup if and only if $\gcd(A) = 1$.
\end{theorem}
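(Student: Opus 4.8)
The plan is to prove the two implications separately. Observe first that $\big<A\big>$ automatically contains $0$ and is closed under addition, so the only substantive part of the definition of a numerical semigroup left to verify is that $\mathbb{N}\setminus\big<A\big>$ is finite. We may also assume throughout that $A\neq\{0\}$, since $\gcd(\{0\})=0$; replacing $A$ by $A\setminus\{0\}$ changes neither $\big<A\big>$ nor $\gcd(A)$, so under either hypothesis we may take $A$ to consist of positive integers.

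For the forward implication I would argue by contraposition. If $d=\gcd(A)>1$, then every nonnegative integer combination of elements of $A$ is again a multiple of $d$, so $\big<A\big>\subseteq d\mathbb{N}$. Consequently the infinitely many integers $1,\,d+1,\,2d+1,\dots$ all lie outside $\big<A\big>$, hence $\mathbb{N}\setminus\big<A\big>$ is infinite and $\big<A\big>$ is not a numerical semigroup.

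For the converse, assume $\gcd(A)=1$. The first step is a reduction to finitely many generators. The collection of gcds of nonempty finite subsets of $A$ is a nonempty set of positive integers, so it has a least element $d'$; if some $a\in A$ were not divisible by $d'$, appending $a$ to the finite subset achieving $d'$ would yield a finite subset with strictly smaller gcd, a contradiction. Hence $d'$ divides every element of $A$, so $d'=\gcd(A)=1$, and there exist $a_1,\dots,a_k\in A$ with $\gcd(a_1,\dots,a_k)=1$. By B\'ezout's identity there are integers $\lambda_i$ with $\sum_i\lambda_i a_i=1$; grouping the positive and negative coefficients writes $1=M-M'$ with $M,M'\in\big<A\big>$. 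If $M'=0$ then $1\in\big<A\big>$ and $\big<A\big>=\mathbb{N}$; otherwise $M'\geq 1$, $M=M'+1$, and $\gcd(M,M')=1$.

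The last step shows $\big<A\big>$ is cofinite using only the two elements $M$ and $M'$. Since $\gcd(M,M')=1$ (indeed $M\equiv 1\pmod{M'}$), the multiples $0,M,2M,\dots,(M'-1)M$ hit every residue class modulo $M'$; each lies in $\big<A\big>$, and $\big<A\big>$ is closed under adding $M'$. Thus for any integer $n\geq (M'-1)M$, choosing $j\in\{0,\dots,M'-1\}$ with $jM\equiv n\pmod{M'}$ makes $n-jM$ a nonnegative multiple $tM'$ of $M'$, so $n=jM+tM'\in\big<A\big>$. Hence $\mathbb{N}\setminus\big<A\big>\subseteq\{0,1,\dots,(M'-1)M-1\}$ is finite, completing the proof. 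The only place that genuinely needs care is the reduction from the possibly infinite set $A$ to a finite subset with gcd $1$; once that is done, the rest is B\'ezout together with the elementary coin-problem bound for two coprime integers (cf. \cite{Sylvester1883}).
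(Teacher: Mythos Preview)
Your proof is correct and follows the standard argument for this classical fact. Note, however, that the paper does not actually prove this theorem: it is stated as a known result, cited from \cite{Rosales2015,Rosales2009}, with no proof supplied in the text. Hence there is no in-paper proof to compare against; your approach (contraposition for the forward direction, and for the converse a reduction to a finite subset with $\gcd$ equal to $1$, followed by B\'ezout and the two-generator cofiniteness bound) is essentially the argument one finds in the cited references.
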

\begin{definition}
A positive integer $x$ is a {\em Thabit number base $b$} if $x = (b+1)\cdot b^n - 1$ for some $n, b \in \mathbb{N}$ and $b \geq 2$.
\end{definition}
\begin{definition}
A numerical semigroup $S$ is called a {\em Thabit numerical semigroup base $b$} if there exists $n, b \in \mathbb{N}$ and $b \geq 2$ such that $S = \big< \{(b+1)\cdot b^{n+i} - 1 | i \in \mathbb{N}\}\big>$. We will denote by $T_b(n)$ the Thabit numerical semigroup base $b$ $\big< \{(b+1)\cdot b^{n+i} - 1 | i \in \mathbb{N}\}\big>$.
\end{definition}
\begin{definition}
A positive integer $x$ is a {\em Thabit number of the second kind base $b$} if $x = (b+1)\cdot b^n + 1$ for some $n, b \in \mathbb{N}$ and $b \geq 2$.
\end{definition}
\begin{definition}
A numerical semigroup $S$ is called a {\em Thabit numerical semigroup of the second kind base $b$} if there exists $n, b \in \mathbb{N}$, $b \geq 2$ and $b \not\equiv 1 \pmod{3}$ such that $S = \big< \{(b+1)\cdot b^{n+i} + 1 | i \in \mathbb{N}\}\big>$. We will denote by $T_{b'}(n)$ the Thabit numerical semigroup of the second kind base $b$ $\big< \{(b+1)\cdot b^{n+i} + 1 | i \in \mathbb{N}\}\big>$.
\end{definition}
\begin{definition}
We call a positive integer $x$ be a {\em Cunningham number} if $x = b^n + 1$ for some $n, b \in \mathbb{N}$, $2 | b$.
\end{definition}
\begin{definition}
A numerical semigroup $S$ is called a {\em Cunningham numerical semigroup} if there exists $n, b \in \mathbb{N}$ and $2 | b$ such that $S = \big< \{b^{n+i} + 1 | i \in \mathbb{N}\}\big>$. We will denote by $SC^{+}(b,n)$ the Cunningham numerical semigroup $\big< \{b^{n+i} + 1 | i \in \mathbb{N}\}\big>$.
\end{definition}
\begin{definition}
We call a positive integer $x$ be a {\em Fermat number base $b$} if $x = b^{b^n} + 1$ for some $n, b \in \mathbb{N}$ and $2 | b$.
\end{definition}
\begin{definition}
A numerical semigroup $S$ is called a {\em Fermat numerical semigroup base $b$} if there exists $n, b \in \mathbb{N}$ and $2 | b$ such that $S = \big< \{b^{b^{n+i}} + 1 | i \in \mathbb{N}\}\big>$. We will denote by $SF(b,n)$ the Fermat numerical semigroup $\big< \{b^{b^{n+i}} + 1 | i \in \mathbb{N}\}\big>$.
\end{definition}
\begin{definition}\cite{Rosales2015,Rosales2009}
If $S$ is a numerical semigroup and $S = \big< A\big>$ then we say that $A$ is a {\em system of generators of} $S$. Moreover, if $S \neq \big< X\big>$ for all $X \not \subseteq A$, we say that $A$ is a {\em minimal system of generators of} $S$.
\end{definition}
\begin{theorem}\cite{Rosales2009}
Every numerical semigroup admits a unique minimal system of generators, which in addition is finite.
\end{theorem}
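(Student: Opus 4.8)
The plan is to exhibit the minimal system of generators explicitly and then check its properties one by one. Put $S^{*}=S\setminus\{0\}$, let $S^{*}+S^{*}=\{a+b\mid a,b\in S^{*}\}$, and set $A=S^{*}\setminus(S^{*}+S^{*})$. I will show, in order, that $\langle A\rangle=S$, that $A$ is a minimal system of generators of $S$, that $A$ is contained in every system of generators of $S$ (which gives uniqueness), and that $A$ is finite.

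First I would prove $\langle A\rangle=S$. The inclusion $\langle A\rangle\subseteq S$ is immediate because $A\subseteq S$ and $S$ is closed under addition. For the reverse inclusion I argue by strong induction on $x\in S^{*}$ with respect to the usual order of $\mathbb{N}$: the multiplicity $m=\min S^{*}$ cannot be written as a sum of two nonzero elements of $S$, so $m\in A\subseteq\langle A\rangle$; and if $x\in S^{*}$ is not in $A$, then $x=a+b$ for some $a,b\in S^{*}$ with $a,b<x$, so $a,b\in\langle A\rangle$ by the induction hypothesis and hence $x\in\langle A\rangle$.

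Minimality and uniqueness both rest on a single observation: if $B$ is any system of generators of $S$, then $A\subseteq B$. Indeed, let $x\in A$ and write $x=\lambda_{1}b_{1}+\cdots+\lambda_{k}b_{k}$ with $b_{i}\in B$ and the $\lambda_{i}$ not all zero; expanding each $\lambda_{i}b_{i}$ as a repeated sum presents $x$ as a sum of one or more elements of $S^{*}$. Since $x\notin S^{*}+S^{*}$, that sum cannot have two or more terms, so $x=b_{i}\in B$ for some $i$. It follows that $A$ is a minimal system of generators: deleting any $x\in A$ leaves a set not containing $x$, which therefore fails to generate $x$. Moreover, if $B$ is a minimal system of generators then $A\subseteq B$; since $A$ already generates $S$, minimality of $B$ forces $B=A$, so the minimal system of generators is unique.

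It remains to prove finiteness. With $m=\min S^{*}$, I claim that $x-m\notin S$ for every $x\in A$ with $x\neq m$: otherwise $x-m\in S$ and $x-m\neq 0$, so $x-m\in S^{*}$ and then $x=m+(x-m)\in S^{*}+S^{*}$, contradicting $x\in A$. Hence $x\mapsto x-m$ embeds $A\setminus\{m\}$ into $\mathbb{N}\setminus S$, which is finite by the definition of a numerical semigroup, so $A$ is finite. The whole argument is elementary; the only step needing a little care is ``$A$ is contained in every generating set,'' where one must make sure the decomposition of an element of $A$ collapses to a single nonzero term, and I do not expect any real obstacle there.
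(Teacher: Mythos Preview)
Your proof is correct. The paper, however, does not give its own proof of this statement at all: it is stated in the introduction as a background fact and simply cited from the book of Rosales and Garc\'{\i}a-S\'anchez, so there is nothing in the paper to compare your argument against. For what it is worth, the argument you give---taking $A=S^{*}\setminus(S^{*}+S^{*})$, proving $\langle A\rangle=S$ by strong induction, showing $A$ is contained in every generating set, and bounding $|A|$ via the injection $x\mapsto x-m$ into $\mathbb{N}\setminus S$---is exactly the standard proof that appears in that reference.
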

\begin{definition}\cite{Rosales2015,Rosales2009}
We call the cardinality of its minimal system of generators the {\em embedding dimension of} $S$, denoted by $e(S)$.
\end{definition}
\begin{definition}\cite{Rosales2015,Rosales2009}
We call the greatest integer that does not belong to $S$ the {\em Frobenius number of} $S$ denoted by $F(S)$ and the cardinality of $\mathbb{N}\texttt{\char`\\}S$ is called the {\em genus of} $S$ and denoted by $g(S)$.
\end{definition}
\section{The embedding dimension for $T_b(n)$} \label{sec_The embedding dimension for $GT(n,k)$}
If $n,b \in \mathbb{N}$ and $b \geq 2$, then $T_b(n)$ is a submonoid of $(\mathbb{N},+)$. Moreover we have $\{(b+1)\cdot b^n -1, (b+1)\cdot b^{n+1} - 1\} \subseteq T_b(n)$ and $\gcd((b+1)\cdot b^n - 1, (b+1)\cdot b^{n+1} - 1) = \gcd((b+1)\cdot b^{n+1} - b, (b+1)\cdot b^{n+1} - 1)  | b-1$. But $(b+1)\cdot b^n - 1 \equiv 1 \pmod{b-1}$ implies that if we let $(b+1)\cdot b^n - 1 = g\alpha = h(b-1) + 1$ where $g = \gcd((b+1)\cdot b^n - 1,(b+1)\cdot b^{n+1} - 1) | b-1$ and $h \in \mathbb{N}$ then $g | 1$ and hence $\gcd(T_b(n)) = 1$ and $T_b(n)$ is a numerical semigroup.
\begin{lemma}\label{Lem_2a_2m_eq}
Let $A$ be a nonempty set of positive integers, $b \in \mathbb{N}, b \geq 2$ and $M = \big<A\big>$. Then the following conditions are equivalent:
\begin{enumerate}
\item $ba+(b-1) \in M$ for all $a \in A$,
\item $bm+(b - 1) \in M$ for all $m \in M\texttt{\char`\\}\{0\}$.
\end{enumerate}
\end{lemma}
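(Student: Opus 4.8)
The plan is to treat the two implications separately. The direction (2) $\Rightarrow$ (1) is immediate: every $a \in A$ is a positive integer, hence $a \in M \setminus \{0\}$, so applying (2) with $m = a$ gives $ba + (b-1) \in M$ at once.

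For (1) $\Rightarrow$ (2) the natural strategy is the usual ``the set of good elements is a submonoid'' trick. I would set
\[
M' = \{0\} \cup \{\, m \in M \setminus \{0\} : bm + (b-1) \in M \,\},
\]
and prove that $M'$ is a submonoid of $(\mathbb{N},+)$ with $A \subseteq M'$. Granting this, $M = \langle A \rangle$ is contained in $M'$ because $M$ is the smallest submonoid containing $A$; and $M \subseteq M'$ says precisely that $bm + (b-1) \in M$ for every $m \in M \setminus \{0\}$, which is (2). By construction $0 \in M'$ and $M' \subseteq M$, and the inclusion $A \subseteq M'$ is exactly hypothesis (1), so the only point to check is that $M'$ is closed under addition.

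For closure, the one observation I need is that $bx \in M$ whenever $x \in M$ (add $x$ to itself $b$ times). Then, given nonzero $x, y \in M'$, the identity
\[
b(x+y) + (b-1) = \bigl(bx + (b-1)\bigr) + by
\]
exhibits $b(x+y) + (b-1)$ as a sum of two elements of $M$, while $x + y$ is a nonzero element of $M$; hence $x + y \in M'$. The cases in which $x$ or $y$ equals $0$ are trivial, so $M'$ is indeed a submonoid, and the argument closes.

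I do not anticipate a genuine obstacle; the entire content is the bookkeeping identity $b(x+y)+(b-1) = (bx+(b-1)) + by$, which peels the additive constant $b-1$ off a single summand while distributing the multiplier $b$ over the rest. An essentially equivalent route is induction on the number $k$ of generators appearing in a representation $m = a_{i_1} + \cdots + a_{i_k}$ of a nonzero $m \in M$: the case $k=1$ is hypothesis (1), and the passage from $k-1$ to $k$ applies the same identity with $x = a_{i_1} + \cdots + a_{i_{k-1}}$ and $y = a_{i_k}$. Presumably this lemma is then used to reduce the verification of a membership condition for $T_b(n)$ from all of $M$ to its generating set.
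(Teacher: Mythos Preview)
Your proof is correct and complete. The paper itself does not give a proof but merely refers to Lemma~1 of \cite{Rosales2015} and Lemma~2 of \cite{Rosales2016}; the standard argument in those references is exactly the submonoid/induction trick you carry out, so your approach matches what the paper intends.
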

The proof of the above lemma is similar to that of lemma 1 in \cite{Rosales2015}, and it is a special case of the lemma 2 in \cite{Rosales2016}.
\begin{proposition}
If $n,b \in \mathbb{N}$ and $b \geq 2$, then $bt+(b - 1) \in T_b(n)$ for all $t \in T_b(n)\texttt{\char`\\}\{0\}$.
\end{proposition}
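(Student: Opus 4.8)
The plan is to invoke Lemma~\ref{Lem_2a_2m_eq} with $A = \{(b+1)\cdot b^{n+i} - 1 \mid i \in \mathbb{N}\}$ and $M = T_b(n) = \big<A\big>$. Since $T_b(n)$ has already been shown to be a numerical semigroup (in particular a submonoid of $(\mathbb{N},+)$ generated by $A$), the lemma applies, and because its two conditions are equivalent it suffices to verify condition~(1): that $ba + (b-1) \in T_b(n)$ for every generator $a \in A$. This reduces an infinite family of membership checks (one for each nonzero element of $T_b(n)$) to a single uniform check on the generators.

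So I would fix $i \in \mathbb{N}$, set $a = (b+1)\cdot b^{n+i} - 1$, and compute directly:
\[
ba + (b-1) = b\big((b+1)b^{n+i} - 1\big) + (b-1) = (b+1)b^{n+i+1} - b + (b-1) = (b+1)b^{n+(i+1)} - 1 .
\]
The right-hand side is exactly the generator of $T_b(n)$ indexed by $i+1$, hence it lies in $T_b(n)$. Since $i$ was arbitrary, condition~(1) of Lemma~\ref{Lem_2a_2m_eq} holds, and the lemma then gives condition~(2), namely $bt + (b-1) \in T_b(n)$ for all $t \in T_b(n)\setminus\{0\}$, which is precisely the assertion of the proposition.

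There is essentially no obstacle here: the entire content is the telescoping identity $b\big((b+1)b^{m} - 1\big) + (b-1) = (b+1)b^{m+1} - 1$, which shifts the exponent up by one and lands back inside the generating set; the structural work of passing from generators to all of $M$ has been packaged into Lemma~\ref{Lem_2a_2m_eq}. If anything, the only point requiring a word of care is confirming that the hypotheses of that lemma are met (that $T_b(n)$ is genuinely $\big<A\big>$ with $A$ a set of positive integers and $b\ge 2$), which is immediate from the setup preceding the statement.
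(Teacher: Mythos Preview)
Your proof is correct and follows essentially the same approach the paper intends: the paper states that the proof is similar to that of Proposition~2 in \cite{Rosales2015}, which is precisely the argument of applying the (analogue of) Lemma~\ref{Lem_2a_2m_eq} to the generating set and using the telescoping identity $b\big((b+1)b^{n+i}-1\big)+(b-1)=(b+1)b^{n+i+1}-1$ to verify condition~(1).
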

The proof of the above proposition is similar to that of proposition 2 in \cite{Rosales2015}.\\
We need some preliminary results to find out the minimal system of generators of $T_b(n)$.
\begin{lemma}\label{Lem_2s}
Let $n,b \in \mathbb{N}$ and $b \geq 2$ and
$S = \big<\{(b+1)\cdot b^{n+i} - 1| i \in \{0,1,\cdots,n+1\}\}\big>$. Then $bt + (b-1) \in S$ for all $t \in S\texttt{\char`\\}\{0\}$.
\end{lemma}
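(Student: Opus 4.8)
The plan is to deduce the statement directly from Lemma~\ref{Lem_2a_2m_eq}, applied with $M = S$ and $A = \{a_0, a_1, \dots, a_{n+1}\}$, where I write $a_i = (b+1)b^{n+i} - 1$. By that lemma it suffices to verify its condition~(1), namely that $b a_i + (b-1) \in S$ for every generator $a_i$. A one-line computation gives $b a_i + (b-1) = (b+1)b^{n+i+1} - 1 = a_{i+1}$ for all $i \ge 0$. Hence for $i \in \{0,1,\dots,n\}$ the element $b a_i + (b-1) = a_{i+1}$ is itself one of the generators of $S$, so it lies in $S$ trivially. The only case that requires real work is $i = n+1$, where $b a_{n+1} + (b-1) = a_{n+2} = (b+1)b^{2n+2} - 1$, which is \emph{not} among the chosen generators.

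So the crux is to show $a_{n+2} \in S$, and the approach is to write down an explicit $\mathbb{N}$-linear representation. I would use
\[
a_{n+2} = \big(b^{\,n+2} - b^{\,n} - 1\big)\,a_0 + a_1 + a_n .
\]
The coefficient $b^{\,n+2} - b^{\,n} - 1$ is a positive integer for all $b \ge 2$, $n \ge 0$, and every index appearing ($0$, $1$, and $n$) lies in $\{0,1,\dots,n+1\}$; in the small cases $n = 0$ and $n = 1$ the terms $a_0,a_1,a_n$ partially coincide, but the right-hand side is still a genuine nonnegative combination of $a_0,\dots,a_{n+1}$, so $a_{n+2} \in S$ in every case. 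Checking the displayed identity is a routine expansion after substituting $a_i = (b+1)b^{n+i}-1$: the top-degree contributions sum to $(b+1)b^{2n+2}$, while the lower-order terms cancel down to $-1$.

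With $a_{n+2}\in S$ established, condition~(1) of Lemma~\ref{Lem_2a_2m_eq} holds for $A$, hence so does condition~(2), i.e.\ $bt + (b-1) \in S$ for every $t \in S\setminus\{0\}$, which is exactly the assertion of the lemma.

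The main obstacle is producing (or guessing) the combination witnessing $a_{n+2}\in S$; once that identity is in hand, the rest is bookkeeping together with a black-box appeal to Lemma~\ref{Lem_2a_2m_eq}. To find such a combination systematically one can set $a_{n+2} = \sum_{i=0}^{n+1} c_i a_i$ and reduce this equation modulo $a_0 + 1 = (b+1)b^n$ and modulo $b-1$; these two congruences pin down $\sum_{i} c_i$ (equivalently, they force the relevant ``scale'' of the combination to be $b-1$), after which a short search over small coefficients yields the representation above.
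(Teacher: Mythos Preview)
Your proof is correct and follows essentially the same route as the paper's: both reduce via Lemma~\ref{Lem_2a_2m_eq} to the single nontrivial check that $a_{n+2}=(b+1)b^{2n+2}-1\in S$, and both establish this via the identity $a_{n+2}=(b^{n+2}-b^n-1)a_0+a_1+a_n$ (the paper writes the coefficient as $(b-1)(b+1)b^n-1$, which is the same number). Your additional remarks on the small cases $n\in\{0,1\}$ and on how one could discover the combination are welcome extras but do not alter the argument.
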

\begin{proof}
The proof of the above lemma is similar to that of lemma 3 in \cite{Rosales2015} and the result can be obtained easily by the equation as follows: 
\begin{align*}
& b\{(b+1)\cdot b^{2n+1} - 1\} + (b-1) \\
= & (b+1)\cdot b^{2n+2} - 1 \\
= & ((b-1)(b+1)\cdot b^n - 1)((b+1)\cdot b^n - 1) + (b+1)\cdot b^{n+1} - 1 + (b+1)\cdot b^{2n} - 1.
\end{align*}
\end{proof}
A system of generators of $GT(n,k)$ will be given in the next lemma; note that it is not a minimal system of generators in the general case.
\begin{lemma}\label{Lem_rev_minimal_1}
If $n,b \in \mathbb{N}$ and $b \geq 2$, then $T_b(n) \\
=  \big<\{(b+1)\cdot b^{n+i} - 1| i \in \{0,1,\cdots,n+1\}\}\big>$.
\end{lemma}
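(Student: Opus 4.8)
The plan is to prove the two inclusions separately. The inclusion $\langle\{(b+1)\cdot b^{n+i}-1 \mid i\in\{0,1,\dots,n+1\}\}\rangle \subseteq T_b(n)$ is immediate, since every listed element $(b+1)\cdot b^{n+i}-1$ with $0\le i\le n+1$ is one of the defining generators of $T_b(n)$. All the work lies in the reverse inclusion $T_b(n)\subseteq S$, where I write $S=\langle\{(b+1)\cdot b^{n+i}-1 \mid 0\le i\le n+1\}\rangle$; for this it suffices to show that each remaining defining generator $(b+1)\cdot b^{n+i}-1$ with $i\ge n+2$ already belongs to $S$.

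To do this I would combine Lemma~\ref{Lem_2s}, which gives $bt+(b-1)\in S$ for every $t\in S\setminus\{0\}$, with the elementary identity
\[
b\bigl((b+1)\cdot b^{n+i}-1\bigr)+(b-1)=(b+1)\cdot b^{n+i+1}-1 .
\]
Since $(b+1)\cdot b^{n+(n+1)}-1=(b+1)\cdot b^{2n+1}-1$ is one of the generators of $S$, hence an element of $S\setminus\{0\}$, an induction on $i\ge n+1$ yields $(b+1)\cdot b^{n+i}-1\in S$ for all $i\ge n+1$: the inductive step is exactly one application of Lemma~\ref{Lem_2s} to $t=(b+1)\cdot b^{n+i}-1$, read off from the displayed identity. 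Together with the trivial fact that $(b+1)\cdot b^{n+i}-1\in S$ for $0\le i\le n+1$, this shows that every defining generator of $T_b(n)$ lies in $S$, so $T_b(n)\subseteq S$, and equality follows.

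I do not expect a genuine obstacle here: once Lemma~\ref{Lem_2s} is available, the argument is a clean bootstrapping induction, entirely parallel to the corresponding step in \cite{Rosales2015} for the case $b=2$. The only points deserving a moment's care are verifying the algebraic identity above and anchoring the induction at the correct index $i=n+1$, so that the generators produced by iterating $t\mapsto bt+(b-1)$ begin precisely where the finite generating set ends; both are routine. Note finally that this lemma only asserts that the listed $n+2$ elements \emph{generate} $T_b(n)$ — whether they form the \emph{minimal} system of generators, and hence whether $e(T_b(n))=n+2$, is a separate matter to be addressed afterwards.
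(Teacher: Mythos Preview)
Your proposal is correct and follows essentially the same approach as the paper, which simply refers to the analogous Lemma~4 in \cite{Rosales2015}: one inclusion is trivial, and the other is obtained by applying Lemma~\ref{Lem_2s} together with the identity $b\bigl((b+1)b^{n+i}-1\bigr)+(b-1)=(b+1)b^{n+i+1}-1$ to inductively absorb all remaining generators into $S$. There is nothing to add.
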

\begin{proof}
This proof is similar to the proof of Lemma 4 in \cite{Rosales2015}.
\end{proof}
In addition, we suggest a conclusion for a minimal system of generators of $T_b(n)$ in the following theorem.
\begin{theorem}\label{Thm_minimal}
If $n,b \in \mathbb{N}$ and $b \geq 2$, then $\big<\{(b+1)\cdot b^{n+i} - 1| i \in \{0,1,\cdots,n+1\}\}\big>$ is a minimal system of generators.
\end{theorem}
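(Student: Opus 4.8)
The plan is the following. By Lemma~\ref{Lem_rev_minimal_1} the set $G=\{\,s_i:=(b+1)b^{n+i}-1\mid 0\le i\le n+1\,\}$ generates $T_b(n)$, so by the standard characterization of the (unique) minimal system of generators of a numerical semigroup --- a generating set is minimal exactly when none of its elements lies in the submonoid generated by the remaining ones (see \cite{Rosales2009}) --- it suffices to prove that $s_j\notin\langle G\setminus\{s_j\}\rangle$ for each $j\in\{0,1,\dots,n+1\}$. The case $j=0$ is trivial: since $0<s_0<s_1<\cdots<s_{n+1}$, every nonzero element of $\langle s_1,\dots,s_{n+1}\rangle$ is at least $s_1>s_0$.

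For $j\ge 1$ I would argue by contradiction, assuming $s_j=\sum_{i\ne j}t_i s_i$ with $t_i\in\mathbb{N}$. A one-line size estimate rules out the use of any generator larger than $s_j$ (a single copy of $s_{j+1}$ already overshoots $s_j$), so in fact $s_j=\sum_{i=0}^{j-1}t_i s_i$. Substituting $s_i=(b+1)b^{n+i}-1$ and writing $T=\sum_{i=0}^{j-1}t_i$ and $P=\sum_{i=0}^{j-1}t_i b^i$, this relation collapses to the single identity $(b+1)b^{n}\bigl(b^{j}-P\bigr)=1-T$.

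The endgame is a short case split on $T$. If $T=1$ then the identity forces $P=b^{j}$, which is impossible because $T=1$ means $P=b^{i}$ for some $i\le j-1$. If $T\ge 2$ the right-hand side is negative, hence $P\ge b^{j}+1$, and reinserting this into the identity yields $T\ge (b+1)b^{n}+1$; but then $s_j=\sum_{i=0}^{j-1}t_i s_i\ge T s_0\ge\bigl((b+1)b^{n}+1\bigr)\bigl((b+1)b^{n}-1\bigr)=(b+1)^2 b^{2n}-1$, while $j\le n+1$ gives $s_j\le s_{n+1}=(b+1)b^{2n+1}-1$. Comparing the two bounds forces $(b+1)\le b$, a contradiction, so no such relation exists.

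I expect this last comparison to be the crux: it is the only step that genuinely uses the hypothesis $j\le n+1$, and it must be, since the larger generators $s_{n+2},s_{n+3},\dots$ really are redundant (that is precisely the content of Lemma~\ref{Lem_rev_minimal_1}), so any correct argument has to break down beyond index $n+1$. The other ingredients --- the monotonicity estimates and the reformulation via the identity $(b+1)b^{n}(b^{j}-P)=1-T$ --- are routine; the one thing to set up carefully is that reformulation, which works precisely because it cleanly separates the number of summands $T$ from their base-$b$ weight $P$ and thereby makes the final inequality chase transparent.
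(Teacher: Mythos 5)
Your argument is correct and is essentially the paper's own: assume a representation of a generator by the others, reduce modulo $(b+1)\cdot b^{n}$ (your identity $(b+1)b^{n}(b^{j}-P)=1-T$ is exactly this step) to force the coefficient sum to be at least $1+(b+1)\cdot b^{n}$, and then get a size contradiction because such a sum already exceeds $s_{n+1}$. The only difference is one of completeness: the paper's printed proof runs this computation only for $s_{n+1}$ against $\langle s_0,\dots,s_n\rangle$, whereas you carry out the same estimate uniformly for every index $j\le n+1$ (after the easy reduction to smaller-index generators), which is the fuller verification of minimality.
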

\begin{proof}
Let us suppose conversely, that $(b+1)\cdot b^{2n+1} - 1 \in \big<\{(b+1)\cdot b^{n+i} - 1 | i \in \{0,1,\cdots, n\}\}\big>$. Then there exist $a_0, \cdots, a_{n+k-1} \in \mathbb{N}$ such that
\begin{align*}
(b+1)\cdot b^{2n+1} - 1 & = \sum_{j=0}^{n} a_j \{(b+1)\cdot b^{n+j} - 1\} \\ 
& = \sum_{j=0}^{n} (b + 1) a_j b^{n+j} - \sum_{j=0}^{n} a_j
\end{align*}
and consequently, $\sum_{j=0}^{n} a_j \equiv 1 \mod{(b+1)\cdot b^{n}}$. Hence $\sum_{j=0}^{n} a_j = 1 + t \cdot (b+1) \cdot b^n$ for some $t \in \mathbb{N}$. In addition, it is clear that $t \neq 0$ and thus $\sum_{j=0}^{n} a_j \geq 1 + (b+1)\cdot b^n$. Combining these results, we obtain the inequality
\begin{gather*}
\sum_{j=0}^{n} a_j \{(b+1)\cdot b^{n+j} - 1\} > \{(b + 1)\cdot b^{2n+1} - 1 \}.
\end{gather*}
This completes the proof.
\end{proof}
By Theorem \ref{Thm_minimal}, we can identify the embedding dimension of $T_b(n)$ for all  $n,b \in \mathbb{N}$ and $b \geq 2$.
\begin{corollary}
Let $n,b \in \mathbb{N}$ and $b \geq 2$ and let $T_b(n)$ be a Thabit numerical semigroup base $b$ associated with $n$ and $b$. Then $e(T_b(n)) = n + 2$.
\end{corollary}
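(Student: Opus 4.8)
The plan is to read off the embedding dimension directly from the description of the minimal system of generators established above, since $e(S)$ is \emph{defined} to be the cardinality of the (unique) minimal system of generators of $S$. So the only real task is to identify that system and count its elements.

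First I would invoke Lemma~\ref{Lem_rev_minimal_1}, which gives $T_b(n) = \big<\{(b+1)\cdot b^{n+i} - 1 \mid i \in \{0,1,\dots,n+1\}\}\big>$, so that the displayed set is at least a system of generators of $T_b(n)$. Then Theorem~\ref{Thm_minimal} shows this system is minimal, and by the uniqueness theorem for minimal systems of generators it is \emph{the} minimal system of generators of $T_b(n)$.

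Next I would verify that the $n+2$ listed generators are pairwise distinct, so that the set genuinely has $n+2$ elements: the map $i \mapsto (b+1)\cdot b^{n+i} - 1$ is strictly increasing in $i$ for $b \geq 2$, hence the values obtained for $i = 0, 1, \dots, n+1$ are all different. Therefore the minimal generating set has exactly $n+2$ elements, and $e(T_b(n)) = n+2$.

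I do not expect any genuine obstacle here: all the substantive content is already contained in Theorem~\ref{Thm_minimal}, and this corollary is a bookkeeping consequence of it together with the definition of embedding dimension. The single point deserving a moment's attention is the element count—that no two indices in $\{0,\dots,n+1\}$ yield the same generator—which is immediate from the strict monotonicity in $i$.
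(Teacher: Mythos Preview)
Your proposal is correct and matches the paper's approach: the corollary is stated immediately after Theorem~\ref{Thm_minimal} as a direct consequence of it together with the definition of embedding dimension, with no separate proof given. Your extra remark that the $n+2$ generators are pairwise distinct (by strict monotonicity in $i$) is a harmless clarification the paper leaves implicit.
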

Gathering all this information we obtain that for each integer $t$ greater than or equal to two, there exists a Thabit numerical semigroup base $b$ $T_b(n)$ with embedding dimension $t$.
\begin{example}
$T_b(0) = \big<\{(b+1) \cdot b^0 - 1, (b + 1) \cdot b^1 - 1 \}\big> = \big<\{b, b^2 + b - 1\}\big>$ is a Thabit numerical semigroup base $b$ with embedding dimension $0 + 2 = 2$.
\end{example}

\begin{example}
$T_b(3)= \big<\{(b+1) \cdot b^3 - 1, (b + 1) \cdot b^4 - 1 \}, (b+1) \cdot b^5 - 1, (b + 1) \cdot b^6 - 1, (b+1) \cdot b^7 - 1 \} \big> = \big<\{b^4 + b^3 - 1, b^5 + b^4 - 1, b^6 + b^5 - 1, b^7 + b^6 - 1, b^8 + b^7 - 1\}\big>$ is a Thabit numerical semigroup base $b$ with embedding dimension $3 + 2 = 5$.
\end{example}
\section{The Ap\'{e}ry set for $T_b(n)$}\label{sec_The Apery set for $GT(n,k)$}
Let $S$ be a numerical semigroup and let $x \in S \texttt{\char`\\}\{0\}$. The Ap\'{e}ry set of $x$ in $S$ is defined as $Ap(S,x) = \{s \in S | s-x \not \in S \}$ in \cite{Rosales2009}. From \cite{Rosales2009}, we easily deduce the following Lemma.
\begin{lemma} \cite{Rosales2009}
Let $S$ be a numerical semigroup and let $x \in S \texttt{\char`\\}\{0\}$. Then $Ap(S,x)$ has cardinality equal to $x$. Moreover $Ap(S,x) = \{w(0),w(1),\cdots, w(x-1)\}$ where $w(i)$ is the least element of $S$ congruent with $i$ modulo $x$ for all $i \in \{0,\cdots, x-1\}$.
\end{lemma}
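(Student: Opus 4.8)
The plan is to establish that $i\mapsto w(i)$ is a well-defined map from $\{0,1,\cdots,x-1\}$ into $S$ whose image is exactly $Ap(S,x)$ and which is injective; the cardinality claim then follows at once. First I would note that each $w(i)$ makes sense: since $\mathbb{N}\setminus S$ is finite, the arithmetic progression $i+x\mathbb{N}$ meets $S$ in cofinitely many points, so it has a least element, which we name $w(i)$.

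Next I would check the inclusion $\{w(0),\cdots,w(x-1)\}\subseteq Ap(S,x)$. Fix $i$ and consider $w(i)-x$. If $w(i)<x$, this quantity is negative, hence not in $S$, so $w(i)\in Ap(S,x)$ directly from the definition. If $w(i)\ge x$, then $w(i)-x$ is a nonnegative integer lying in the class $i+x\mathbb{N}$ and strictly below $w(i)$, so minimality of $w(i)$ forces $w(i)-x\notin S$, and again $w(i)\in Ap(S,x)$. For the reverse inclusion, take $s\in Ap(S,x)$ and let $i$ be the residue of $s$ modulo $x$ with $0\le i<x$. Then $w(i)\le s$ and $s-w(i)=kx$ for some $k\in\mathbb{N}$; if $k\ge 1$, then since $S$ is a submonoid containing $x$ we have $(k-1)x\in S$, whence $s-x=w(i)+(k-1)x\in S$, contradicting $s\in Ap(S,x)$. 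Hence $k=0$ and $s=w(i)$.

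It remains to observe that $w(0),\cdots,w(x-1)$ are pairwise distinct, which is immediate because they lie in pairwise distinct residue classes modulo $x$. Therefore $Ap(S,x)=\{w(0),\cdots,w(x-1)\}$ has exactly $x$ elements, as claimed. The argument is wholly elementary and has no serious obstacle; the only points that need a little care are the case split $w(i)<x$ versus $w(i)\ge x$, so that a negative value is handled correctly, and the use of closure under addition in the step $w(i)+(k-1)x\in S$, which is where the monoid structure of $S$ — rather than merely the cofiniteness of its complement — actually enters.
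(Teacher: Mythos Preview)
Your argument is correct and is the standard proof of this well-known fact. Note that the paper does not actually prove this lemma: it is quoted from \cite{Rosales2009} with the remark that one ``easily deduce[s]'' it, so there is no proof in the paper to compare against. Your write-up supplies exactly the elementary verification one expects, handling the minor subtleties (the sign of $w(i)-x$ and the use of closure under addition) carefully.
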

\begin{example}
Let $S = \left<\{7,11,13\}\right>$. Then\\
$S = \{ 0,7,11,13,14,18,20,21,22, 24, 25, 26, 27,28,29,31,\rightarrow\}$
where the symbol $\rightarrow$ means that every integer greater than $31$ belongs to the set. \\
Hence $Ap(S,7) = \{0,11,13,22,24,26,37\}$.
\end{example}
The relation among the Frobenius number, genus and Ap\'{e}ry set of a numerical semigroup is provided in the following lemma.
\begin{lemma} \cite{Rosales2009, Selmer1977} \label{lem_F_g}
Let $S$ be a numerical semigroup and let $x \in S\texttt{\char`\\}\{0\}$. Then,
\begin{enumerate}
\item $F(S) = \max(Ap(S,x)) - x$
\item $g(S) = \frac{1}{x} (\sum_{w \in Ap(S,x)} w) - \frac{x-1}{2}$
\end{enumerate}
\end{lemma}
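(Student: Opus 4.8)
The plan is to derive both identities from the description of $Ap(S,x)$ furnished by the preceding lemma, namely $Ap(S,x)=\{w(0),w(1),\dots,w(x-1)\}$, where $w(i)$ is the least element of $S$ congruent to $i$ modulo $x$. The first step I would carry out is the membership criterion: for a nonnegative integer $z$ with $z\equiv i\pmod{x}$, one has $z\in S$ if and only if $z\ge w(i)$. The forward implication is immediate from the minimality defining $w(i)$. For the converse, if $z\ge w(i)$ then $z=w(i)+kx$ for some $k\in\mathbb{N}$, and since $w(i)\in S$, $x\in S$, and $S$ is closed under addition, we get $z\in S$. This criterion is the only place where any real work occurs; everything afterwards is bookkeeping over the $x$ residue classes modulo $x$, using that $\mathbb{N}\setminus S$ is finite (so all maxima below are well defined).

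For part (1), I would fix a residue class $i\in\{0,\dots,x-1\}$ and note, via the criterion, that the integers in that class lying outside $S$ are exactly the nonnegative integers congruent to $i$ modulo $x$ that are strictly below $w(i)$; the largest such integer is $w(i)-x$ (for $i=0$ this is negative, consistent with $0\in S$). Taking the maximum over $i$ and pulling the constant $-x$ out of the maximum gives
\begin{equation*}
F(S)=\max_{0\le i\le x-1}\bigl(w(i)-x\bigr)=\max_{0\le i\le x-1}w(i)-x=\max\bigl(Ap(S,x)\bigr)-x .
\end{equation*}

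For part (2), I would write $w(i)=i+k_i x$ with $k_i\in\mathbb{N}$. By the same criterion, the nonnegative integers in residue class $i$ that are not in $S$ are precisely $i,\,i+x,\,\dots,\,i+(k_i-1)x$, hence there are $k_i$ of them, so $g(S)=\sum_{i=0}^{x-1}k_i$. On the other hand,
\begin{equation*}
\sum_{w\in Ap(S,x)}w=\sum_{i=0}^{x-1}(i+k_i x)=\frac{x(x-1)}{2}+x\sum_{i=0}^{x-1}k_i=\frac{x(x-1)}{2}+x\,g(S),
\end{equation*}
and solving for $g(S)$ yields the stated formula. I do not anticipate a genuine obstacle: the only subtle point is to state and apply the membership criterion of the first step correctly, after which both identities follow immediately from partitioning $\mathbb{N}$ into residue classes modulo $x$.
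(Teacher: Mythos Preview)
Your proof is correct and is essentially the standard argument for this classical result (due to Selmer). Note, however, that the paper does not give its own proof of this lemma: it is stated with a citation to \cite{Rosales2009,Selmer1977} and used as a black box. So there is no ``paper's proof'' to compare against; your argument simply supplies what the references contain.
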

Henceforth, we will denote by $s_i$ the elements $(b+1)\cdot b^{n+i} - 1$ for each $i \in \{0,1,\cdots,n+1 \}$. Thus, with this notation, $\{s_0,s_1,\cdots,s_{n+1}\}$ is the minimal system of generators of $T_b(n)$.
\begin{lemma}\label{Lem_push_coefficients_1}
Let $n,b \in \mathbb{N}$ and $b \geq 2$. Then:
\begin{enumerate}
\item If $0<i\leq j<n+1 $ then $s_i + bs_j = bs_{i-1} + s_{j+1}$.
\item If $0<i\leq n+1 $ then
\end{enumerate}
\begin{displaymath}
s_i + bs_{n+1} = bs_{i-1} + (b-1)s_0^2 + (b-2)s_0 + s_1 + s_n.
\end{displaymath}
\end{lemma}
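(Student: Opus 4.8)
The plan is to read both formulas as identities between integers, using only the closed form $s_i = (b+1)b^{n+i}-1$; the inequalities on the indices are needed solely to guarantee that $s_{i-1}$ and $s_{j+1}$ (resp.\ $s_{i-1}$) actually belong to the generating set $\{s_0,\dots,s_{n+1}\}$, so that each equation rewrites its left-hand side as an honest combination of generators of $T_b(n)$.

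For part~1 I would expand both sides directly:
\[
s_i + b s_j = (b+1)b^{n+i} - 1 + (b+1)b^{n+j+1} - b = (b+1)\left(b^{n+i}+b^{n+j+1}\right) - (b+1),
\]
\[
b s_{i-1} + s_{j+1} = (b+1)b^{n+i} - b + (b+1)b^{n+j+1} - 1 = (b+1)\left(b^{n+i}+b^{n+j+1}\right) - (b+1),
\]
and observe that the two right-hand sides agree. Here $0<i$ makes $s_{i-1}$ meaningful and $j<n+1$ forces $s_{j+1}\in\{s_0,\dots,s_{n+1}\}$.

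For part~2 I would isolate the two sub-identities that carry the content. The first is the base-$b$ Thabit recursion $s_i = b s_{i-1} + (b-1)$, valid for every $i\ge 1$ and immediate from the closed form. The second is the claim
\[
(b-1)s_0^2 + (b-2)s_0 + s_1 + s_n = b s_{n+1} + (b-1),
\]
which is exactly the identity already verified inside the proof of Lemma~\ref{Lem_2s}: writing $(b+1)b^n = s_0+1$ gives $\left((b-1)(b+1)b^n - 1\right)\left((b+1)b^n - 1\right) = \left((b-1)s_0 + (b-2)\right)s_0 = (b-1)s_0^2 + (b-2)s_0$, while $(b+1)b^{n+1}-1 = s_1$ and $(b+1)b^{2n}-1 = s_n$, and the left member of that display equals $b s_{n+1} + (b-1)$ since $s_{n+1} = (b+1)b^{2n+1}-1$. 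Granting these, part~2 collapses to a single chain:
\[
b s_{i-1} + (b-1)s_0^2 + (b-2)s_0 + s_1 + s_n = b s_{i-1} + \left(b s_{n+1} + (b-1)\right) = \left(b s_{i-1} + (b-1)\right) + b s_{n+1} = s_i + b s_{n+1}.
\]

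I do not expect a genuine obstacle: the whole lemma is polynomial arithmetic in $b$. The only place demanding attention is the bookkeeping in the second sub-identity of part~2 — in particular that the coefficient of $b^{2n}$ assembles as $(b-1)(b+1)^2 + (b+1) = (b+1)b^2$ and that the terms linear in $b^n$ cancel against $s_1$ — but this is precisely the computation displayed in the proof of Lemma~\ref{Lem_2s}, so it can be cited rather than redone.
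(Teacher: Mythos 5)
Your proposal is correct and follows essentially the same route as the paper: both parts are verified by direct computation with the closed form $s_i=(b+1)b^{n+i}-1$, and part 2 rests on exactly the polynomial identity $(b+1)b^{2n+2}-1=\bigl((b-1)(b+1)b^n-1\bigr)\bigl((b+1)b^n-1\bigr)+s_1+s_n$ that the paper cites (the same display used in Lemma~\ref{Lem_2s}), which you merely reorganize via the recursion $s_i=bs_{i-1}+(b-1)$.
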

\begin{proof} (1) The proof is similar to that of (1) of Lemma 9 in \cite{Rosales2015}.\\
(2) It can be derived directly by the equation as follows:
\begin{align*}
& (b+1)\cdot b^{2n+2} - 1 \\
= & ((b-1)(b+1)\cdot b^n - 1)((b+1)\cdot b^n - 1) + (b+1)\cdot b^{n+1} - 1 + (b+1)\cdot b^{2n} - 1.
\end{align*}
\end{proof}
By lemma \ref{Lem_push_coefficients_1}, we can consider the set of coefficients $(t_1,\cdots, t_{n+1})$ such that the expressions $\sum_{j=1}^{n+1} t_j s_j$ represent all elements in $Ap(T_b(n),s_0)$. We will follow a step by step approach to establish the set of coefficients $(t_1,\cdots,t_{n+1})$. First, we obtain the set of coefficients $(t_1,\cdots , t_{n+1})$ such that $\sum_{j=1}^{n+1} t_j s_j$ contains all elements that are in $Ap(T_b(n),s_0)$, but that might not be equal.  We can obtain the set by the following lemma.
\begin{lemma} \label{Lem_A}
Let $A_b(n)$ be the set of $(t_1, \cdots, t_{n+1}) \in \{0,1,\cdots,b\}^{n+1}$ such that $t_{n+1} \in \{0,1,\cdots,b-1\}$ and if $t_j = b$, then $t_i = 0$ for all $i < j$. Then $Ap(T_b(n),s_0) \subseteq \{\sum_{j=1}^{n+1} t_j s_j | (t_1,\cdots,t_{n+1}) \in A_b(n)\}$.
\end{lemma}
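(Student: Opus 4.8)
The plan is to show that every element $w \in Ap(T_b(n),s_0)$ admits an expression $w = \sum_{j=1}^{n+1} t_j s_j$ with $(t_1,\dots,t_{n+1}) \in A_b(n)$. By Lemma~\ref{Lem_rev_minimal_1} every element of $T_b(n)$, in particular every $w \in Ap(T_b(n),s_0)$, can be written as $\sum_{j=0}^{n+1} a_j s_j$ with $a_j \in \mathbb{N}$; since $w - s_0 \notin T_b(n)$, such an expression must use $a_0 = 0$, so $w = \sum_{j=1}^{n+1} a_j s_j$. The idea is then to run a rewriting/normalization procedure on the coefficient vector $(a_1,\dots,a_{n+1})$ using the identities of Lemma~\ref{Lem_push_coefficients_1}, driving any vector into the ``normal form'' dictated by $A_b(n)$ without ever leaving the class modulo $s_0$ and without increasing the represented value in a way that produces $a_0 > 0$.

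First I would set up the reduction rules. Rule (1) of Lemma~\ref{Lem_push_coefficients_1}, $s_i + b s_j = b s_{i-1} + s_{j+1}$ for $0 < i \le j < n+1$, lets me ``carry'' a block of $b$ copies of $s_j$ upward to one $s_{j+1}$ at the cost of decrementing a lower index $s_i$ and paying $b$ copies of $s_{i-1}$; taking $i = j$ this is exactly the statement $(b+1)s_j = b s_{j-1} + s_{j+1}$, a clean base-$b$-type carry. Rule (2) handles the overflow at the top, $s_i + b s_{n+1} = b s_{i-1} + (b-1)s_0^2 + (b-2)s_0 + s_1 + s_n$, which is where the ``$t_{n+1} \le b-1$'' constraint of $A_b(n)$ comes from. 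Using these, I would argue by a descent on a suitable monovariant — for instance the lexicographically-ordered vector $(\text{number of indices } j \text{ with } t_j \ge b \text{ or a violation}, \sum_j t_j, \dots)$, or simply induction on $\sum_{j=1}^{n+1} a_j$ — that any admissible representation can be transformed into one satisfying: $t_{n+1} \le b-1$, and whenever some $t_j = b$ all lower coefficients vanish. Concretely: if $t_{n+1} \ge b$, apply rule (2) (using any nonzero lower coefficient, or first creating one) to reduce $t_{n+1}$; if some $t_j = b+\ell$ with $\ell \ge 0$ for $j \le n$, apply rule (1) to push the excess upward; if $t_j = b$ but some $t_i \ne 0$ with $i < j$, again apply rule (1) with that $i$ to clear the obstruction, shifting weight upward. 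Each step must be shown to strictly decrease the chosen monovariant, and to preserve membership in $Ap(T_b(n),s_0)$ — the latter because the rewriting never introduces an $s_0$ term that cannot be absorbed, since $w - s_0 \notin T_b(n)$ forces the final $s_0$-coefficient to be $0$, so any $s_0$ produced by rule (2) would contradict the Apéry property and hence cannot actually occur for a genuine Apéry element; alternatively one shows directly that applying (2) to an Apéry element yields a representation still avoiding $s_0$.

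The main obstacle I anticipate is controlling the interaction of rules (1) and (2) so that the procedure genuinely terminates: rule (1) with $i = j$ is weight-decreasing (it replaces $b+1$ generators by $b+1$ generators but raises indices, so it must be measured by an index-weighted sum), while rule (2) injects the $s_0^2$ term $(b-1)s_0^2 = (b-1)(s_0)(s_0)$ whose expansion in the generators could in principle re-inflate lower coefficients. Handling this cleanly requires a carefully chosen well-founded order — likely reverse-lexicographic on $(t_{n+1}, t_n, \dots, t_1)$ combined with total weight as a tiebreaker — together with the observation that for an \emph{Apéry} element the $s_0^2$ term can never be generated, so rule (2) is only ever applied ``virtually'' to certify the final bound $t_{n+1} \le b-1$ rather than actually executed with its full right-hand side. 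Once the well-foundedness of the descent is established and the Apéry constraint is used to suppress spurious $s_0$ contributions, the containment $Ap(T_b(n),s_0) \subseteq \{\sum_{j=1}^{n+1} t_j s_j \mid (t_1,\dots,t_{n+1}) \in A_b(n)\}$ follows, and the analogy with Lemma~5 of~\cite{Rosales2015} (the $b=2$ case) guides every step.
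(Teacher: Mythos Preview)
Your approach is the same as the paper's (which simply defers to Lemma~10 of \cite{Rosales2015} and singles out one leftover case), and your diagnosis of the roles of the two identities in Lemma~\ref{Lem_push_coefficients_1} is essentially right: rule~(1) with $i\ge 2$ is the genuine rewriting move, while rule~(1) with $i=1$ and rule~(2) are not rewriting steps at all but contradiction-producers, since each manufactures a positive $s_0$--summand and hence exhibits $w-s_0\in T_b(n)$. Your parenthetical ``alternatively one shows directly that applying~(2) to an Ap\'ery element yields a representation still avoiding $s_0$'' should be dropped: rule~(2) \emph{always} produces $s_0$--terms, so there is no such alternative.

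There is, however, a real gap. When $t_{n+1}=b$ and $t_1=\cdots=t_n=0$, i.e.\ $w=bs_{n+1}$, neither rule applies: rule~(1) needs some $t_j\ge b$ with $j\le n$, and rule~(2) needs either some $t_i\ge 1$ with $i\le n$ or $t_{n+1}\ge b+1$. Your phrase ``or first creating one'' does not rescue this---no identity among $s_1,\dots,s_{n+1}$ lets you conjure a nonzero lower coefficient out of $bs_{n+1}$ alone without already introducing $s_0$. The paper isolates exactly this case and defers it to Corollary~\ref{cor_n1_k2}, where $bs_{n+1}\notin Ap(T_b(n),s_0)$ is proved by a residue/maximality argument (using $bs_{n+1}\equiv b^n-1\pmod{s_0}$ together with Lemma~\ref{Lem_maxAp_1}). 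You need either to invoke that corollary or to supply an independent argument for this single element.

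For the termination worry you raise: a clean monovariant for rule~(1) applied as $s_i+bs_j\mapsto bs_{i-1}+s_{j+1}$ (with $2\le i\le j\le n$) is $\Phi(t)=\sum_{j=1}^{n+1} j\,t_j$, which drops by $(b-1)(j-i+1)\ge b-1>0$ at each step. If such a step ever pushes $t_{n+1}$ up to $b$, you are back in the situation handled by rule~(2) (or by the $bs_{n+1}$ case), giving a contradiction with the Ap\'ery hypothesis; so the descent terminates in a tuple in $A_b(n)$.
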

\begin{proof}
The overall proof is the same as that of the lemma 10 in \cite{Rosales2015} and I will show that $bs_{n+1} \not \in Ap(T_b(n),s_0)$ in corollary \ref{cor_n1_k2}.
\end{proof}
\begin{lemma}\label{Lem_x-1}
Let $n,b \in \mathbb{N}$ and $b \geq 2$. If $x \in T_b(n)$ and $x \not\equiv0 \mod{s_0}$, then $x - (b-1) \in T_b(n)$.
\end{lemma}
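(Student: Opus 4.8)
The plan is to exploit the algebraic identity $s_j-(b-1)=bs_{j-1}$, valid for every $j\in\{1,\dots,n+1\}$, since
\[
s_j-(b-1)=(b+1)\cdot b^{n+j}-1-(b-1)=(b+1)\cdot b^{n+j}-b=b\bigl((b+1)\cdot b^{n+j-1}-1\bigr)=bs_{j-1}.
\]
In particular $s_j-(b-1)=bs_{j-1}\in T_b(n)$ whenever $j\ge 1$, because $s_{j-1}$ is one of the generators (here $j-1\ge 0$).

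First I would fix an expression $x=\sum_{j=0}^{n+1}t_js_j$ with $t_0,\dots,t_{n+1}\in\mathbb{N}$, which exists since $x\in T_b(n)$ and $\{s_0,\dots,s_{n+1}\}$ generates $T_b(n)$ by Lemma~\ref{Lem_rev_minimal_1}. The hypothesis $x\not\equiv 0\pmod{s_0}$ forces at least one of $t_1,\dots,t_{n+1}$ to be positive: otherwise $x=t_0s_0$ would be a multiple of $s_0$. Fix such an index $j\in\{1,\dots,n+1\}$ with $t_j\ge 1$.

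Then I would split off one copy of $s_j$ and apply the identity:
\[
x-(b-1)=\bigl(x-s_j\bigr)+\bigl(s_j-(b-1)\bigr)=\Bigl(\textstyle\sum_{i\ne j}t_is_i+(t_j-1)s_j\Bigr)+bs_{j-1}.
\]
The first summand is a nonnegative-integer combination of the generators of $T_b(n)$, hence lies in $T_b(n)$; the second summand $bs_{j-1}$ lies in $T_b(n)$ as noted above. Therefore $x-(b-1)\in T_b(n)$, which is the claim. (Nonnegativity of $x-(b-1)$ is automatic from this decomposition, but in any case $x\ne 0$ gives $x\ge s_0=(b+1)b^n-1>b-1$.)

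I do not expect a serious obstacle here; the only point needing care is recognizing that the congruence hypothesis $x\not\equiv 0\pmod{s_0}$ is precisely what guarantees that some generator other than $s_0$ appears in the chosen representation of $x$, which is exactly the input required to run the identity $s_j-(b-1)=bs_{j-1}$.
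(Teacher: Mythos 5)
Your proposal is correct and follows essentially the same route as the paper: both take a representation $x=\sum_j t_js_j$, use the hypothesis $x\not\equiv 0\pmod{s_0}$ to find some $j\ge 1$ with $t_j\ge 1$, and apply the identity $s_j-(b-1)=bs_{j-1}$ to rewrite $x-(b-1)$ as a nonnegative combination of generators. No gaps.
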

\begin{proof}
If $x \in T_b(n)$, then there exist $a_0,\cdots,a_{n+1} \in \mathbb{N}$ such that $x = \sum_{j=0}^{n+1} a_j s_j$. On the other hand, if $x \not\equiv 0 \mod{s_0}$ then there exists $i \in \{1,\cdots, n+1\}$ such that $a_i \neq 0$. Note that $s_i = (b+1)\cdot b^{n+i} - 1$ as defined in Section \ref{sec_The embedding dimension for $GT(n,k)$}. Hence
\begin{align*}
x - (b-1) & = \sum_{j=0, j\neq i}^{n+1} a_j s_j + (a_i - 1)s_i + (b+1)\cdot b^{n+i} - b\\
& = \sum_{j=0, j\neq i}^{n+1} a_j s_j + (a_i - 1)s_i + b\{(b + 1)\cdot b^{n+i-1} - 1\}\\
& = \sum_{j=0, j\neq i-1, i}^{n+1} a_j s_j + (a_{i-1} + b) s_{i-1} + (a_i - 1)s_i \in T_b(n).
\end{align*}
\end{proof}
Note that contrary to this lemma, $x \in T(n)$ implies that $x - 1 \in T(n)$ in \cite{Rosales2015}.
Lemma \ref{Lem_x-1} is very important since $\gcd(b-1, s_0) = 1$ by $s_0 \equiv 1(\mod{b-1})$, for any $x > (b-1)\{(b + 1)\cdot b^n - 1\} = (b - 1)s_0$, the set $\{x - i(b - 1)| i \in \{0,1,\cdots,(b + 1)\cdot b^n - 2\}\}$ is a complete system of residues modulo $s_0$ and hence we obtain the following lemma.
\begin{lemma}\label{Lem_maxAp_1}
If $n,b \in \mathbb{N}$ and $b \geq 2$ then
\begin{gather*}
w(s_0 - (b - 1)) = \max(Ap(T_b(n)),s_0)).
\end{gather*}
\end{lemma}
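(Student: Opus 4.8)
The plan is to exploit Lemma~\ref{Lem_x-1} cyclically, using the coprimality $\gcd(b-1,s_0)=1$ noted just before the statement. Recall that $Ap(T_b(n),s_0)=\{w(0),w(1),\dots,w(s_0-1)\}$, where $w(r)$ denotes the least element of $T_b(n)$ congruent to $r$ modulo $s_0$; in particular $w(0)=0$. The first step is to prove that for every $r\in\{1,\dots,s_0-1\}$ one has $w(r)>w(r')$, where $r'\in\{0,\dots,s_0-1\}$ is the residue of $r-(b-1)$ modulo $s_0$. Indeed, $w(r)\in T_b(n)$ and $w(r)\not\equiv 0\pmod{s_0}$, so Lemma~\ref{Lem_x-1} gives $w(r)-(b-1)\in T_b(n)$; this element lies in the residue class $r'$, so minimality of $w(r')$ forces $w(r')\le w(r)-(b-1)<w(r)$.

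The second step is to read off the maximum from the cyclic structure of the map $\sigma\colon r\mapsto(r-(b-1))\bmod s_0$ on $\mathbb{Z}/s_0\mathbb{Z}$. Since $\gcd(b-1,s_0)=1$ (because $s_0\equiv 1\pmod{b-1}$, as already observed) and $1\le s_0-(b-1)<s_0$ for $b\ge 2$, the permutation $\sigma$ is a single $s_0$-cycle and $\sigma(0)=s_0-(b-1)\neq 0$. Writing the $\sigma$-orbit of $0$ as $r_0=0,\ r_1=\sigma(r_0),\ r_2=\sigma(r_1),\dots,r_{s_0}=r_0$, the inequality from the first step applies at each index $k\in\{1,\dots,s_0-1\}$ (where $r_k\neq 0$), giving
\[
w(r_1)>w(r_2)>\cdots>w(r_{s_0})=w(0)=0 .
\]
Since $\{r_1,\dots,r_{s_0-1},r_0\}$ exhausts all residue classes modulo $s_0$, this forces $w(r_1)=\max\{w(r):0\le r\le s_0-1\}=\max(Ap(T_b(n),s_0))$, and $r_1=s_0-(b-1)$, which is exactly the assertion.

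I do not anticipate a real obstacle: the only points needing care are the verification $w(r)-(b-1)\in T_b(n)$ through Lemma~\ref{Lem_x-1} (which only requires $r\neq 0$) and the trivial inequality $b-1<s_0$, which guarantees that $s_0-(b-1)$ is a bona fide nonzero residue and that the descending chain in the second step has positive length. The essential idea is simply that subtracting $b-1$ keeps an element inside $T_b(n)$ as long as it avoids the $0$-class and cycles through all classes, so the largest Ap\'ery element must occupy the class reached immediately after $0$; this then feeds directly into Lemma~\ref{lem_F_g} to compute $F(T_b(n))$ and $g(T_b(n))$.
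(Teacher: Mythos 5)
Your proof is correct and follows essentially the same route the paper indicates: it combines Lemma~\ref{Lem_x-1} with the coprimality $\gcd(b-1,s_0)=1$, so that subtracting $b-1$ cycles through all residue classes modulo $s_0$, forcing the class $s_0-(b-1)$ (the one reached immediately after $0$) to carry the maximal Ap\'ery element. Your write-up via the descending chain along the orbit of $r\mapsto r-(b-1)\bmod s_0$ is simply a careful formalization of the sketch the paper gives just before the lemma.
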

From this lemma and the fact that $s_i \equiv b^i - 1 \mod{s_0}$, we can prove the following corollary previously announced in lemma \ref{Lem_A}.
\begin{corollary}\label{cor_n1_k2}
If $n,b \in \mathbb{N}$ and $b \geq 2$ then $bs_{n+1} \not \in Ap(T_b(n),s_0)$.
\end{corollary}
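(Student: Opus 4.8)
The plan is to prove the equivalent statement $bs_{n+1} - s_0 \in T_b(n)$, since by the definition of the Ap\'ery set $bs_{n+1} \in Ap(T_b(n),s_0)$ holds exactly when $bs_{n+1} - s_0 \notin T_b(n)$. Equivalently, using the description of $Ap(T_b(n),s_0)$ as the collection of least elements of $T_b(n)$ in the $s_0$ residue classes modulo $s_0$, it suffices to exhibit an element of $T_b(n)$ that is congruent to $bs_{n+1}$ modulo $s_0$ but strictly smaller than it.

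First I would reduce $bs_{n+1}$ modulo $s_0$. From $s_i \equiv b^i - 1 \pmod{s_0}$ we get $bs_{n+1} \equiv b(b^{n+1} - 1) = b^{n+2} - b \pmod{s_0}$. Since $(b-1)s_0 = (b-1)\big((b+1)b^n - 1\big) = b^{n+2} - b^n - b + 1$, this yields $b^{n+2} - b \equiv b^n - 1 \equiv s_n \pmod{s_0}$, so $bs_{n+1} \equiv s_n \pmod{s_0}$.

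Now $s_n \in T_b(n)$, and because $b \geq 2$ we have $bs_{n+1} \geq 2 s_{n+1} > s_n$. Hence the least element of $T_b(n)$ in the residue class of $s_n$ modulo $s_0$ is at most $s_n$, therefore strictly less than $bs_{n+1}$; consequently $bs_{n+1}$ is not that least element, so $bs_{n+1} \notin Ap(T_b(n),s_0)$. For a fully explicit version, one verifies the identity $bs_{n+1} = s_n + (b-1)s_0^2 + 2(b-1)s_0$ (both sides equal $(b+1)b^{2n+2} - b$ after expanding $s_0^2$), from which $bs_{n+1} - s_0 = s_n + (b-1)s_0^2 + (2b-3)s_0 \in T_b(n)$ because $2b - 3 \geq 1$ and $s_0,\, s_n,\, s_0^2$ all lie in $T_b(n)$.

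I do not expect a genuine obstacle here: this corollary is precisely the postponed ingredient of Lemma \ref{Lem_A}, namely that a coefficient $b$ on the last generator $s_{n+1}$ never survives in an Ap\'ery expansion. The only delicate point will be the bookkeeping in the congruence reduction — correctly recognizing $b^{n+2} - b^n - b + 1$ as $(b-1)s_0$ — after which the minimality description of the Ap\'ery set (and Lemma \ref{Lem_maxAp_1}, which rests on the same congruences $s_i \equiv b^i - 1 \pmod{s_0}$) closes the argument immediately.
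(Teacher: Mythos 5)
Your proof is correct, but it takes a genuinely different (and more self-contained) route than the paper's. The paper argues via maximality: it first recalls from Lemma \ref{Lem_push_coefficients_1} that $s_i + bs_{n+1} \notin Ap(T_b(n),s_0)$ for all $i$, concludes that if $bs_{n+1}$ were in the Ap\'ery set it would have to be its maximum, and then contradicts Lemma \ref{Lem_maxAp_1}: the maximum must be the element congruent to $s_0-(b-1)$ modulo $s_0$, whereas $bs_{n+1} \equiv b^n-1 \pmod{s_0}$, a different residue. You reduce $bs_{n+1}$ modulo $s_0$ in exactly the same way, but then recognize the residue $b^n-1$ as that of $s_n$ itself and invoke the characterization of the Ap\'ery set as the least elements of $T_b(n)$ in each residue class: since $s_n \in T_b(n)$ lies in the same class and $s_n < bs_{n+1}$, the element $bs_{n+1}$ cannot be a least element, hence is not in $Ap(T_b(n),s_0)$. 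Your explicit identity $bs_{n+1} = s_n + (b-1)s_0^2 + 2(b-1)s_0$ checks out (it is equivalent to the identity behind part (2) of Lemma \ref{Lem_push_coefficients_1}, using $s_1 = bs_0 + (b-1)$), and it gives the sharpest form of the argument, namely $bs_{n+1}-s_0 = s_n + (b-1)s_0^2 + (2b-3)s_0 \in T_b(n)$ with $2b-3 \geq 1$ for $b \geq 2$. What your route buys is independence from Lemma \ref{Lem_maxAp_1} and from the paper's somewhat delicate step that "not in the Ap\'ery set after adding any generator" forces $bs_{n+1}$ to be \emph{the} maximum; what the paper's route buys is economy, since it reuses machinery (Lemma \ref{Lem_maxAp_1}) that is needed anyway for Corollary \ref{cor_maxAp_2}. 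Both proofs ultimately rest on the same congruence $s_i \equiv b^i - 1 \pmod{s_0}$, and there is no circularity in yours.
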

\begin{proof}
We already know from the proof of lemma \ref{Lem_push_coefficients_1} that \\
$s_i + bs_{n+1} \not \in Ap(T_b(n),s_0)$ for all $i \in \mathbb{N}$. Therefore, if $bs_{n+1} \in Ap(T_b(n),s_0)$, it can be expected to be a maximal element. Since $s_i \equiv b^i - 1(\mod{s_0})$, $bs_{n+1} \equiv b(b^{n+1} - 1) \equiv b^n - 1(\mod{s_0})$ but since $s_0 - (b-1) = (b+1)\cdot b^n - b \neq b^n - 1$, $bs_{n+1} \not\in Ap(T_b(n),s_0)$.
\end{proof}
By combining lemma \ref{Lem_maxAp_1} and corollary \ref{cor_n1_k2}, we obtain the following corollary.
\begin{corollary}\label{cor_maxAp_2}
\begin{gather*}
\max(Ap(T_b(n)),s_0)) = (b-1)s_n + (b-1)s_{n+1}.
\end{gather*}
\end{corollary}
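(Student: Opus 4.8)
The approach is to combine Lemma~\ref{Lem_maxAp_1} with a short congruence computation and one non-membership check. By Lemma~\ref{Lem_maxAp_1}, $\max(Ap(T_b(n),s_0))=w(s_0-(b-1))$, the least element of $T_b(n)$ in the residue class of $s_0-(b-1)\equiv -(b-1)$ modulo $s_0$. Since $Ap(T_b(n),s_0)$ contains exactly one element in each residue class modulo $s_0$, it suffices to prove (i) that $(b-1)s_n+(b-1)s_{n+1}\equiv -(b-1)\pmod{s_0}$ and (ii) that $(b-1)s_n+(b-1)s_{n+1}-s_0\notin T_b(n)$: then $(b-1)s_n+(b-1)s_{n+1}$ is the Ap\'{e}ry representative of its residue class and hence equals $w(s_0-(b-1))$. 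We assume $n\ge 1$ throughout (the coefficient bounds used below rely on this).

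Step (i) is an immediate computation from $s_i\equiv b^i-1\pmod{s_0}$ (noted just before Corollary~\ref{cor_n1_k2}) together with $(b+1)b^n=s_0+1\equiv 1\pmod{s_0}$:
\[
(b-1)s_n+(b-1)s_{n+1}\equiv (b-1)\bigl((b^n-1)+(b^{n+1}-1)\bigr)=(b-1)\bigl((b+1)b^n-2\bigr)\equiv -(b-1)\pmod{s_0}.
\]
In particular this already yields $\max(Ap(T_b(n),s_0))\le(b-1)s_n+(b-1)s_{n+1}$, and the only remaining point, step (ii), is the main obstacle.

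For (ii) I would argue by contradiction. Suppose $(b-1)s_n+(b-1)s_{n+1}-s_0\in T_b(n)$. Writing it over the generators $s_0,\dots,s_{n+1}$ (Lemma~\ref{Lem_rev_minimal_1}) and applying the same normalisation that proves Lemma~\ref{Lem_A} (repeatedly using part~(1) of Lemma~\ref{Lem_push_coefficients_1} to push coefficients toward $s_0$, and Corollary~\ref{cor_n1_k2} to keep the coefficient of $s_{n+1}$ at most $b-1$), one may rewrite it as $(a_0+1)s_0+\sum_{j=1}^{n+1}t_js_j$ with $a_0\ge 0$ and $(t_1,\dots,t_{n+1})\in A_b(n)$. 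Hence $(b-1)s_n+(b-1)s_{n+1}=(a_0+1)s_0+\sum_{j=1}^{n+1}t_js_j$. Reducing modulo $s_0$ and using $s_i\equiv b^i-1$ forces $\sum_{j=1}^{n+1}t_j(b^j-1)\equiv -(b-1)\pmod{s_0}$, and comparing magnitudes forces $\sum_{j=1}^{n+1}t_js_j\le (b-1)s_n+(b-1)s_{n+1}-s_0<(b-1)s_n+(b-1)s_{n+1}$.

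It then remains to show that $(0,\dots,0,b-1,b-1)$ is the \emph{only} tuple in $A_b(n)$ whose value $\sum_{j}t_js_j$ lies in the residue class of $-(b-1)$ modulo $s_0$ and is at most $(b-1)s_n+(b-1)s_{n+1}$; this contradicts the strict inequality just obtained and finishes the proof. A convenient way to do the bookkeeping is to put $N=\sum_{j=1}^{n+1}t_jb^j$ and $S=\sum_{j=1}^{n+1}t_j$, so that $\sum_{j}t_js_j=(b+1)b^nN-S=s_0N+(N-S)$ and the residue condition becomes $N-S+(b-1)\equiv 0\pmod{s_0}$. The constraints defining $A_b(n)$ (each $t_j\le b$, $t_{n+1}\le b-1$, and at most one index equal to $b$ with all smaller indices zero) give $0\le S<s_0$ and $0\le N\le b^{n+2}$, so $N$ is determined modulo $s_0$ by $S$ and ranges over fewer than $b$ multiples of $s_0$; a short inspection of these finitely many candidates shows that, whenever $N-S+(b-1)\equiv 0\pmod{s_0}$, the quantity $s_0N+(N-S)$ is at least $(b-1)s_n+(b-1)s_{n+1}$, with equality exactly for $(0,\dots,0,b-1,b-1)$. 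This gives (ii), and with (i) it proves the corollary.
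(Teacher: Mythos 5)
Your overall strategy is legitimate and differs from the paper's: instead of combining Lemma \ref{Lem_maxAp_1} with Lemma \ref{Lem_A} and identifying the representative of the class of $-(b-1)$ among the forms indexed by $A_b(n)$, you try to show directly that $(b-1)s_n+(b-1)s_{n+1}$ belongs to $Ap(T_b(n),s_0)$ by proving $(b-1)s_n+(b-1)s_{n+1}-s_0\notin T_b(n)$. Step (i) is fine. But step (ii), which you yourself identify as the main obstacle, is not actually proved: the assertion that $(0,\dots,0,b-1,b-1)$ is the \emph{only} tuple of $A_b(n)$ whose value is congruent to $-(b-1)$ modulo $s_0$ and is at most $(b-1)s_n+(b-1)s_{n+1}$ is, given Lemmas \ref{Lem_maxAp_1} and \ref{Lem_A}, essentially equivalent to the corollary itself, and you dispose of it with ``a short inspection of these finitely many candidates shows\dots''. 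The inspection is not of finitely many candidates in any useful sense. With your notation, the residue condition says $N-S=ms_0-(b-1)$ for some integer $m\geq 1$; for every $m\leq b-2$ one must prove that \emph{no} tuple of $A_b(n)$ satisfies this equation (any such tuple would have value $s_0N+(N-S)$ strictly below the target), and for $m=b-1$ one must prove that every solution has $S\geq 2(b-1)$, i.e. $N\geq (b^2-1)b^n$, with equality only for $(0,\dots,0,b-1,b-1)$. Neither argument appears, and this is the genuine gap. (It can be filled: $N-S=\sum_j t_j(b^j-1)$ is divisible by $b-1$, while $ms_0-(b-1)\equiv m\pmod{b-1}$ since $s_0\equiv 1\pmod{b-1}$; moreover $ms_0=N-S+(b-1)\leq b^{n+2}+b-1<bs_0$ for $n\geq 1$, so $m\leq b-1$ and hence $m=b-1$; the case $m=b-1$ then still needs a separate size/digit analysis using $t_{n+1}\leq b-1$. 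None of this is in your text.)

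A secondary flaw is the normalisation step: Corollary \ref{cor_n1_k2} is a non-membership statement about the Ap\'ery set, not a rewriting rule, so it does not let you recast an arbitrary element $X-s_0\in T_b(n)$ as $(a_0+1)s_0+\sum_j t_js_j$ with $(t_1,\dots,t_{n+1})\in A_b(n)$ (in particular it gives no way to remove a residual term $bs_{n+1}$ when all other coefficients vanish). This part is easily repaired without new ideas: since $X-s_0\in T_b(n)$ lies in the class of $-(b-1)$, it is at least $w(s_0-(b-1))=\max(Ap(T_b(n),s_0))$, which by Lemma \ref{Lem_A} is itself of the form $\sum_j t_js_j$ with $(t_j)\in A_b(n)$, and this yields the same strict inequality you want. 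After that repair your whole proof rests on the uniqueness/minimality claim discussed above, which is precisely what remains unproved; by contrast, the paper's (admittedly terse) proof obtains the corollary from Lemma \ref{Lem_maxAp_1}, Lemma \ref{Lem_A} and the same congruence computation.
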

\begin{proof}
By corollary \ref{cor_n1_k2}, we observe that the condition $t_{n+1} \leq b-1$ should be satisfied to $\sum_{j=1}^{n+1} t_j s_j \in Ap(T_b(n))$ where $(t_1,\cdots,t_{n+1}) \in A_b(n)$. Since $(b-1)s_n + (b-1)s_{n+1} \equiv -(b-1) \pmod{s_0}$ and it is the maximal number of the form $\sum_{j=1}^{n+1} t_j s_j$, it completes the proof.
\end{proof}
By combining corollary \ref{cor_maxAp_2} and lemma \ref{lem_F_g}, we obtain the Frobenius number of numerical semigroups generated by Thabit number base $b$.
\begin{theorem}
\begin{gather*}
F(T_b(n)) = (b^{3} + b^{2} - b - 1)\cdot b^{2n} - (b+1)\cdot b^{n} - 2b + 3.
\end{gather*}
\end{theorem}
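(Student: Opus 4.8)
The plan is to combine the two tools already assembled in Section~3: part~(1) of Lemma~\ref{lem_F_g}, which gives $F(S) = \max(Ap(S,x)) - x$ for any $x \in S\setminus\{0\}$, and Corollary~\ref{cor_maxAp_2}, which identifies the maximal element of the Ap\'ery set of $T_b(n)$ relative to $s_0$. Taking $x = s_0 = (b+1)\cdot b^n - 1$, these combine to give
\begin{gather*}
F(T_b(n)) = (b-1)s_n + (b-1)s_{n+1} - s_0.
\end{gather*}
So the entire statement reduces to substituting the closed forms $s_i = (b+1)\cdot b^{n+i} - 1$ and simplifying.

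Concretely, I would first record $s_n = (b+1)\cdot b^{2n} - 1$ and $s_{n+1} = (b+1)\cdot b^{2n+1} - 1$, so that
\begin{gather*}
(b-1)s_n + (b-1)s_{n+1} = (b-1)(b+1)\bigl(b^{2n} + b^{2n+1}\bigr) - 2(b-1) = (b^2-1)(1+b)\,b^{2n} - 2b + 2.
\end{gather*}
Since $(b^2-1)(b+1) = (b-1)(b+1)^2 = b^3 + b^2 - b - 1$, this is $(b^3+b^2-b-1)\cdot b^{2n} - 2b + 2$. Subtracting $s_0 = (b+1)\cdot b^n - 1$ then yields
\begin{gather*}
F(T_b(n)) = (b^3 + b^2 - b - 1)\cdot b^{2n} - (b+1)\cdot b^{n} - 2b + 3,
\end{gather*}
which is exactly the claimed formula. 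As a sanity check, setting $b = 2$ gives $(8+4-2-1)\cdot 4^n - 3\cdot 2^n - 1 = 9\cdot 2^{2n} - 3\cdot 2^n - 1$, recovering the Thabit value from \cite{Rosales2015} quoted in the introduction; one could also verify the $n=0$ case directly against $T_b(0) = \langle b, b^2+b-1\rangle$ via Sylvester's formula $F = b(b^2+b-1) - b - (b^2+b-1) = b^3 - 2b + 1$, which matches $(b^3+b^2-b-1) - (b+1) - 2b + 3$.

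There is essentially no analytic obstacle here: the content of the theorem lives entirely in Corollary~\ref{cor_maxAp_2} (and behind it Lemma~\ref{Lem_maxAp_1}, Corollary~\ref{cor_n1_k2}, and the reduction Lemma~\ref{Lem_x-1}), which already pin down the largest Ap\'ery element. The only thing to be careful about is the bookkeeping in the polynomial expansion of $(b-1)(b+1)^2$ and the handling of the three constant terms $-(b-1) -(b-1) +1 = -2b+3$; I would lay the computation out in a single \texttt{align*} block to make each step transparent.
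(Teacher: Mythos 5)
Your main argument is exactly the paper's: the theorem is obtained by combining part (1) of Lemma~\ref{lem_F_g} (with $x=s_0$) with Corollary~\ref{cor_maxAp_2}, and your expansion $(b-1)s_n+(b-1)s_{n+1}-s_0=(b^3+b^2-b-1)\cdot b^{2n}-(b+1)\cdot b^n-2b+3$ is correct, as is the $b=2$ specialization.

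One aside in your write-up is wrong, and it hides a genuine restriction on the statement. For $T_b(0)=\langle b,\,b^2+b-1\rangle$, Sylvester's formula gives $b(b^2+b-1)-b-(b^2+b-1)=b^3-3b+1$, not $b^3-2b+1$; and the theorem's formula at $n=0$ evaluates to $b^3+b^2-4b+1$, so the two do \emph{not} match (they differ by $b^2-b$; your apparent agreement only occurs at $b=2$ because of the arithmetic slip, and even there the true value $F(\langle 2,5\rangle)=3$ differs from the formula's $5$). The reason is that Corollary~\ref{cor_maxAp_2} fails when $n=0$: there the candidate maximum $(b-1)s_n+(b-1)s_{n+1}=(b-1)s_0+(b-1)s_1$ contains $s_0$ with a positive coefficient, so subtracting $s_0$ leaves an element of the semigroup and the element is not in $Ap(T_b(0),s_0)$; the actual maximum is $(b-1)s_1$, giving $F=b^3-3b+1$. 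So the formula, like its Thabit antecedent in \cite{Rosales2015} (where $T(0)=\langle 2,5\rangle$ has $F=3$, not $9-3-1=5$), should be read with the implicit hypothesis $n\geq 1$. Delete or correct that sanity check; the rest of your proof stands and coincides with the paper's.
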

\begin{example}
Let $b = 2$. Then we obtain
\begin{gather*}
F(T_2(n)) = 9\cdot 2^{2n} - 3\cdot 2^{n} - 1.
\end{gather*}
It is the Frobenius number of Thabit numerical semigroups suggested in \cite{Rosales2015}. 
\end{example}
Let $R_b(n)$ be the set of the sequences $(t_1,\cdots,t_{n+1}) \in A_b(n)$ that if $t_{n+1} = b-1$, it satisfies the following conditions:
\begin{enumerate}
\item $t_n \leq b-1$.
\item If $t_{n} = b-1, t_1 = \cdots = t_{n-1} = 0$.
\end{enumerate}
Then, we obtain the following lemma:
\begin{lemma}\label{Lem_Ap_k<n}
\begin{gather*}
Ap(T_b(n), s_0) = \{\sum_{j=1}^{n+1} t_{j} s_{j} | (t_1,\cdots,t_{n+1}) \in R (n)\}.
\end{gather*}
\end{lemma}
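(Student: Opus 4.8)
\emph{Proof proposal.} The plan is to establish the inclusion $Ap(T_b(n),s_0)\subseteq \{\sum_{j=1}^{n+1}t_js_j\mid (t_1,\dots,t_{n+1})\in R_b(n)\}$ directly, and then obtain the reverse inclusion from a cardinality count, following the pattern used for $b=2$ in \cite{Rosales2015}. Throughout I will use that $|Ap(T_b(n),s_0)|=s_0=(b+1)\cdot b^n-1$ and that $\max(Ap(T_b(n),s_0))=(b-1)s_n+(b-1)s_{n+1}$ by Corollary \ref{cor_maxAp_2}.

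For the inclusion, take $w\in Ap(T_b(n),s_0)$. By Lemma \ref{Lem_A} we may write $w=\sum_{j=1}^{n+1}t_js_j$ with $(t_1,\dots,t_{n+1})\in A_b(n)$. Suppose this tuple is not in $R_b(n)$; since $R_b(n)$ refines $A_b(n)$ only when $t_{n+1}=b-1$, this means $t_{n+1}=b-1$ together with either (a) $t_n=b$, in which case the defining condition of $A_b(n)$ forces $t_1=\cdots=t_{n-1}=0$ and $w=bs_n+(b-1)s_{n+1}=(b-1)s_n+(b-1)s_{n+1}+s_n$, or (b) $t_n=b-1$ and $t_i\neq0$ for some $i\in\{1,\dots,n-1\}$, in which case $w\ge s_i+(b-1)s_n+(b-1)s_{n+1}$. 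In either case $w>(b-1)s_n+(b-1)s_{n+1}=\max(Ap(T_b(n),s_0))$, which is impossible. Hence $(t_1,\dots,t_{n+1})\in R_b(n)$, and the inclusion follows. (For $n=0$ and $n=1$ the conditions defining $R_b(n)$ are essentially vacuous and this step is immediate.)

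Next I would count $R_b(n)$ by splitting on the value of $t_{n+1}$. For each of the $b-1$ values $t_{n+1}\in\{0,\dots,b-2\}$ the coordinates $(t_1,\dots,t_n)$ range freely over the tuples in $\{0,\dots,b\}^n$ obeying ``$t_j=b\Rightarrow t_i=0$ for $i<j$''; separating those with all entries $\le b-1$ from those containing a (necessarily unique) entry equal to $b$ gives $b^n+\frac{b^n-1}{b-1}$ of them. For $t_{n+1}=b-1$, conditions (i) and (ii) defining $R_b(n)$ leave exactly $b^n$ possibilities for $(t_1,\dots,t_n)$: one with $t_n=b-1$ (which forces $t_1=\cdots=t_{n-1}=0$), and $b^n-1$ with $t_n\le b-2$. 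Summing, $|R_b(n)|=(b-1)\bigl(b^n+\tfrac{b^n-1}{b-1}\bigr)+b^n=(b+1)b^n-1=s_0$.

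Finally, the right-hand side of the lemma is the image of $R_b(n)$ under the map $(t_1,\dots,t_{n+1})\mapsto\sum_{j=1}^{n+1}t_js_j$, so it has at most $s_0$ elements; since it contains $Ap(T_b(n),s_0)$, which has exactly $s_0$ elements, the two sets must coincide. I expect the principal obstacle to be the bookkeeping in the count of $R_b(n)$ — correctly organising the sub-cases produced by the rule ``$t_j=b\Rightarrow t_i=0$ for $i<j$'' and checking the degenerate small-$n$ instances — since both inclusions otherwise reduce cleanly to Lemma \ref{Lem_A} and to the already-established value of $\max(Ap(T_b(n),s_0))$.
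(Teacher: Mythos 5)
Your proposal is correct and takes essentially the same route as the paper: the inclusion $Ap(T_b(n),s_0)\subseteq\{\sum_{j=1}^{n+1}t_js_j\mid(t_1,\cdots,t_{n+1})\in R_b(n)\}$ combined with the count $\#R_b(n)=s_0=(b+1)\cdot b^n-1$, obtained by the same case split on $t_{n+1}$ and on whether some coordinate equals $b$. The only difference is that you justify the inclusion explicitly by comparing a putative violating tuple against $\max(Ap(T_b(n),s_0))=(b-1)s_n+(b-1)s_{n+1}$ from Corollary \ref{cor_maxAp_2}, a step the paper merely asserts via Lemma \ref{Lem_A}.
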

\begin{proof}
Note that $Ap(T_b(n), s_0) \subseteq \{\sum_{j=1}^{n+1} t_{j} s_{j} | (t_1,\cdots,t_{n+1}) \in R_b(n)\}$ and hence \\
$\#\{\sum_{j=1}^{n+1} t_{j} s_{j} | (t_1,\cdots,t_{n+1}) \in R_b(n)\} \leq \# R_b(n)$. Then, $\# R_b(n)$ having the cardinality $s_0 = (b + 1) \cdot b^n - 1$ suffices  for the proof. We classify the cases to obtain the cardinality, as follows:
\begin{enumerate}
\item If $t_{n+1} \leq b-2$, then it can be again classified into two cases, as follows: \label{2<k<n_case_1}
\begin{enumerate}
\item Let $b \not \in \{t_1, \cdots, t_{n}\}$. Then
$\#\{(t_1,\cdots,t_{n+1}) \in R_b(n)\} = b^{n}(b-1)$ since $t_i \in \{0,1,\cdots,b-1\}$ for all $1 \leq i \leq n$ for each $t_{n+1} \in \{0,1,\cdots,b-2\}$.
\item Let $b \in \{t_1, \cdots, t_{n}\}$ \label{2<k<n_case_1_2}. If $t_i = b$ for some $i \in \{1,\cdots,n\}$, then $t_j = 0$ for all $j < i$ and $t_j \in \{0,1,\cdots,b-1\}$ for all $i < j \leq n$. Hence, $\#R_b(n)$ in this case is $b^{n - i}$. Thus, we use summation to determine $\#R_b(n)$ for (\ref{2<k<n_case_1_2}): $(b-1)\sum_{i=1}^{n} b^{n - i} = b^n - 1$.
\end{enumerate}
Hence, $\#R_b(n) = b^{n+1} - 1$ in case \ref{2<k<n_case_1}.
\item If $t_{n+1} = b-1$, then it can be again classified into two cases as follows:
\begin{enumerate}
\item Let $t_{n} = b-1$. Then the only possible case is $(t_1,\cdots,t_{n-1})=(0,\cdots,0)$.
\item Let $t_{n} \leq b-2$. Then $\#\{(t_1,\cdots,t_{n+1}) \in R_b(n)\} = b^{n} - 1$ by substituting $n-1$ instead $n$ in the case \ref{2<k<n_case_1}.
\end{enumerate}
\end{enumerate}
Hence, we can conclude that $\#R_b(n) = (b^{n+1} - 1) + 1 + (b^n -1) = s_0$.
\end{proof}
To obtain the genus of the numerical semigroups generated by Thabit number base $b$, we have to check the number of elements in $R_b(n)$ when one element $t_i$ is fixed.
\begin{lemma}\label{lem_Lem27}
Let $i \in \{1,2,\cdots,n+1\}$ where $n \geq 2$ be an integer. Then,
\begin{gather*}
\#\{(t_1,\cdots,t_{n+1}) \in R_b(n) | t_i = b\} = 
\left\{ \begin{array}{ll}
(b^2 - 1)\cdot b^{n-i-1} & \text{if} ~~ i \in \{1,\cdots,n-1\},\\
b-1 & \text{if} ~~ i = n,\\
0 & \text{if} ~~ i = n+1.
\end{array} \right.
\end{gather*}
\end{lemma}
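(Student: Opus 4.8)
The plan is to fix the index $i$ and enumerate directly, organising the count around the three ranges of $i$ appearing in the statement and, inside the main range, splitting further on the value of the last coordinate $t_{n+1}$. Throughout I will use only the defining constraints: membership in $A_b(n)$ (Lemma~\ref{Lem_A}) forces $t_{n+1}\le b-1$ and, whenever some $t_j=b$, forces $t_\ell=0$ for every $\ell<j$; membership in $R_b(n)$ adds, in the case $t_{n+1}=b-1$, the two tail conditions $t_n\le b-1$ and ``$t_n=b-1\Rightarrow t_1=\cdots=t_{n-1}=0$''. The hypothesis $n\ge 2$ is what makes the three ranges $\{1,\dots,n-1\}$, $\{n\}$, $\{n+1\}$ nonempty and pairwise disjoint.

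First I would dispose of $i=n+1$: since every tuple of $A_b(n)\supseteq R_b(n)$ has $t_{n+1}\le b-1$, the event $t_{n+1}=b$ is impossible and the count is $0$. Next, for $i=n$: from $t_n=b$ the ordering condition of $A_b(n)$ forces $t_1=\cdots=t_{n-1}=0$, so the only remaining degree of freedom is $t_{n+1}$. If $t_{n+1}$ were equal to $b-1$, the first $R_b(n)$ tail condition would demand $t_n\le b-1$, contradicting $t_n=b$; hence $t_{n+1}\in\{0,1,\dots,b-2\}$ and there are exactly $b-1$ such tuples.

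For the main case $1\le i\le n-1$ with $t_i=b$, the prefix $t_1=\cdots=t_{i-1}=0$ is again forced, and no coordinate with index larger than $i$ can equal $b$ (otherwise $t_i$ would be forced to $0$), so $t_{i+1},\dots,t_{n+1}$ all lie in $\{0,1,\dots,b-1\}$, subject only to the $R_b(n)$ tail. I would then split on $t_{n+1}$. If $t_{n+1}\le b-2$, the coordinates $t_{i+1},\dots,t_n$ are completely unconstrained, giving $b^{\,n-i}(b-1)$ tuples. If $t_{n+1}=b-1$, the first tail condition is automatic, while the second would force $t_1=\cdots=t_{n-1}=0$ whenever $t_n=b-1$; this cannot happen here because $t_i=b\neq 0$ with $i\le n-1$, so $t_n\le b-2$ while $t_{i+1},\dots,t_{n-1}$ stay free, giving $b^{\,n-1-i}(b-1)$ tuples. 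Adding, $b^{\,n-i}(b-1)+b^{\,n-1-i}(b-1)=(b-1)(b+1)b^{\,n-i-1}=(b^2-1)b^{\,n-i-1}$, which is the claimed value.

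The step I expect to require the most care --- and the only genuine obstacle --- is the bookkeeping in the case $1\le i\le n-1$: one must notice that setting $t_i=b$ makes the second $R_b(n)$ tail condition vacuous, so that the ``$t_{n+1}=b-1$'' subcase collapses to the clean count $b^{\,n-1-i}(b-1)$ rather than something requiring a further subdivision. Everything else is direct counting against the definitions of $A_b(n)$ and $R_b(n)$.
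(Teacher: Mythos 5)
Your proposal is correct and follows essentially the same route as the paper: direct enumeration from the definitions, with the prefix forced to zero by $t_i=b$, the case $i=n+1$ empty since $t_{n+1}\le b-1$, the case $i=n$ reduced to $t_{n+1}\le b-2$, and the main case $i\le n-1$ amounting to excluding $t_n=t_{n+1}=b-1$. The only cosmetic difference is that you split on the value of $t_{n+1}$ while the paper counts the main case in one step via the condition ``either $t_n\le b-2$ or $t_{n+1}\le b-2$''; both yield $(b^2-1)b^{n-i-1}$.
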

\begin{proof}
If $i \in \{1,\cdots,n-1\}, (t_1,\cdots,t_{n+1}) \in R_b(n)$ and $t_i = b$, then $t_1 = \cdots = t_{i-1} = 0, t_{i+1}, \cdots, t_{n+1} \in \{0,1,\cdots,b-1\}$ and furthermore either $t_n \leq b-2$ or $t_{n+1} \leq b-2$. Hence $\#\{(t_1,\cdots,t_{n+1}) \in R_b(n) | t_i = b\} = (b^2 - 1) b^{n-i-1}$. And $\{(t_1,\cdots,t_{n+1}) \in R_b(n) | t_n = b\} = \{(0,\cdots,0,b,0), \cdots, (0,\cdots,0,b,b-2)\}$ and $\{(t_1,\cdots,t_{n+1}) \in R_b(n) | t_{n+1} = b\} = \emptyset$.
\end{proof}
\begin{lemma}\label{lem_Lem28}
Let $i \in \{1,2,\cdots,n-1\}$ where $n \geq 2$ be an integer. Then,
\begin{gather*}
\#\{(t_1,\cdots,t_{n+1}) \in R_b(n) | t_i = k\} = (b + 1)(b^{n-1} - b^{n-i-1})
\end{gather*}
for each $k \in \{1,\cdots,b-1\}$.
\end{lemma}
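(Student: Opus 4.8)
The plan is to fix $i \in \{1,\dots,n-1\}$ and $k \in \{1,\dots,b-1\}$ and to count the tuples $(t_1,\dots,t_{n+1}) \in R_b(n)$ with $t_i = k$ by decomposing each such tuple into a prefix $(t_1,\dots,t_{i-1})$, the fixed entry $t_i = k$, and a tail $(t_{i+1},\dots,t_{n+1})$, after first pinning down exactly which of the defining conditions of $A_b(n)$ (Lemma~\ref{Lem_A}) and of $R_b(n)$ remain active. The key preliminary observation is that $t_i = k \ge 1$ together with $i \le n-1$ forces two simplifications. Since $t_i \ne 0$, the condition ``$t_j = b \Rightarrow t_\ell = 0$ for $\ell < j$'' rules out $t_j = b$ for every $j > i$, so in particular $t_n \ne b$, $t_{n+1} \ne b$, and each of $t_{i+1},\dots,t_{n+1}$ lies in $\{0,\dots,b-1\}$; and the extra $R_b(n)$ clauses collapse, because ``$t_n = t_{n+1} = b-1 \Rightarrow t_1 = \cdots = t_{n-1} = 0$'' is incompatible with $t_i = k \ne 0$, so it merely forbids the pattern $t_n = t_{n+1} = b-1$, while the clause $t_n \le b-1$ is now automatic. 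Consequently a valid tail is any element of $\{0,\dots,b-1\}^{n+1-i}$ other than those with $t_n = t_{n+1} = b-1$, and there are exactly $b^{n+1-i} - b^{n-i-1}$ of them.

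Next I would split according to whether some prefix entry equals $b$. If none of $t_1,\dots,t_{i-1}$ equals $b$, the prefix ranges freely over $\{0,\dots,b-1\}^{i-1}$, contributing $b^{i-1}\bigl(b^{n+1-i} - b^{n-i-1}\bigr) = b^{n-2}(b^2-1)$. If some prefix entry equals $b$, it must be unique (two entries equal to $b$ violate the $A_b(n)$ condition), say $t_j = b$ with $j \in \{1,\dots,i-1\}$; then $t_1 = \cdots = t_{j-1} = 0$, the block $t_{j+1},\dots,t_{i-1}$ ranges over $\{0,\dots,b-1\}^{i-1-j}$, and summing the geometric series $\sum_{j=1}^{i-1} b^{i-1-j} = \frac{b^{i-1}-1}{b-1}$ gives a contribution of $\frac{b^{i-1}-1}{b-1}\bigl(b^{n+1-i}-b^{n-i-1}\bigr) = (b+1)\bigl(b^{n-2} - b^{n-i-1}\bigr)$. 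Adding the two contributions, $b^{n-2}(b^2-1) + (b+1)(b^{n-2} - b^{n-i-1}) = (b+1)b^{n-1} - (b+1)b^{n-i-1} = (b+1)(b^{n-1} - b^{n-i-1})$, which is the asserted value; note that it does not depend on $k$, so the same count holds for every $k \in \{1,\dots,b-1\}$.

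I do not anticipate a genuine difficulty here: the argument is pure bookkeeping, and the only place that requires real care is the reduction step — verifying that $t_i = k \ne 0$ kills every occurrence of $b$ to the right of position $i$ and trims the $R_b(n)$-specific restrictions down to the single forbidden tail pattern $t_n = t_{n+1} = b-1$. Once that reduction is secured, the two-case split and the geometric-series simplification are entirely routine, and one should double-check the boundary case $i = 1$ (empty prefix, empty second sum) to confirm the formula specializes correctly.
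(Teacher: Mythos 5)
Your proof is correct and follows essentially the same route as the paper's: both split on whether some entry among $t_1,\dots,t_{i-1}$ equals $b$ (noting that $t_i=k\neq 0$ forbids any $b$ to the right of position $i$ and reduces the $R_b(n)$ constraints to excluding $t_n=t_{n+1}=b-1$, i.e.\ the paper's ``either $t_n\le b-2$ or $t_{n+1}\le b-2$''), and then sum the same geometric series. Your prefix--tail factorization is only a cosmetic repackaging of the paper's count, and your arithmetic, including the $i=1$ boundary check, is sound.
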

\begin{proof}
Let $i \in \{1,\cdots,n-1\}$. Then it can be classified into two cases as follows
\begin{enumerate}
\item If $b \not\in \{t_1,\cdots, t_{i-1}\}$, then $t_1,\cdots, t_{i-1},t_{i+1},\cdots,t_{n+1} \in \{0,1,\cdots,b-1\}$ and either $t_n \leq b-2$ or $t_{n+1} \leq b-2$. Therefore $\#\{(t_1,\cdots,t_{n+1}) \in R_b(n) | t_i = k \textrm{ and } b \not\in\{t_1,\cdots,t_{i-1}\}\} = (b^2 - 1) b^{n-2}$ for each $k$.
\item If $b \in \{t_1,\cdots,t_{i-1}\}$, then $t_j = b$ for some $j \in \{1,\cdots,i-1\}$. Thus $t_1 = \cdots = t_{j-1} = 0, t_{j+1}, \cdots, t_{n+1} \in \{0,1,\cdots,b-1\}, t_i = k$ and either $t_n \leq b-2$ or $t_{n+1} \leq b-2$. Hence $\#\{(t_1,\cdots,t_{n+1}) \in R_b(n) | t_i = k \textrm{ and } t_j = b\} = (b^2 - 1)b^{n-j-2}$ for each $k$.
\end{enumerate}
Hence, $\#\{(t_1,\cdots,t_{n+1}) \in R_b(n) | t_i = k\} = (b^2 - 1) b^{n-2} + \sum_{j=1}^{i-1} (b^2 - 1)b^{n-j-2} = (b^2 - 1)b^{n-2} + (b + 1)(b^{n-2} - b^{n-i-1}) = (b+1)(b^{n-1} - b^{n-i-1}) $ for each $k \in \{1,\cdots,b-1\}$.
\end{proof}
\begin{lemma}\label{lem_Lem28_2}
Let $n \geq 2$ be an integer. Then,
\begin{gather*}
\#\{(t_1,\cdots,t_{n+1}) \in R_b(n) | t_n = k\} = \frac{b^{n+1} - b}{b-1}
\end{gather*}
for each $k \in \{1,\cdots,b-2\}$.
\end{lemma}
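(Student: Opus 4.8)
The plan is to reduce the count to one inside $A_b(n)$ and then enumerate by conditioning on whether one of the coordinates $t_1,\cdots,t_{n-1}$ equals $b$. First I would observe that for $k\in\{1,\cdots,b-2\}$ the conditions distinguishing $R_b(n)$ from $A_b(n)$ are vacuous once we impose $t_n=k$: those conditions are triggered only when $t_{n+1}=b-1$, and in that case they require $t_n\leq b-1$, which holds since $k\leq b-2$, while the clause ``if $t_n=b-1$ then $t_1=\cdots=t_{n-1}=0$'' never applies because $t_n=k\neq b-1$. Hence
\[
\#\{(t_1,\cdots,t_{n+1})\in R_b(n)\mid t_n=k\}=\#\{(t_1,\cdots,t_{n+1})\in A_b(n)\mid t_n=k\}.
\]

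Next I would fix $t_n=k$ and count the admissible choices of the remaining coordinates $t_1,\cdots,t_{n-1}$ and $t_{n+1}$. The coordinate $t_{n+1}$ is subject only to $t_{n+1}\in\{0,1,\cdots,b-1\}$ — the clause ``if $t_j=b$ then $t_i=0$ for $i<j$'' never involves $j=n+1$ since $t_{n+1}\leq b-1<b$ — so it contributes an independent factor of $b$. For $(t_1,\cdots,t_{n-1})$ I would split into two cases. If none of $t_1,\cdots,t_{n-1}$ equals $b$, then each lies in $\{0,1,\cdots,b-1\}$, giving $b^{n-1}$ tuples. Otherwise there is a unique index $j\in\{1,\cdots,n-1\}$ with $t_j=b$; then $t_1=\cdots=t_{j-1}=0$ while $t_{j+1},\cdots,t_{n-1}\in\{0,1,\cdots,b-1\}$ (a second coordinate equal to $b$ at an index greater than $j$ would force $t_j=0$), which yields $b^{n-1-j}$ tuples for each such $j$. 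Note that fixing $t_n=k$ imposes no restriction in this case, since $t_j=b$ with $j<n$ only constrains coordinates of index smaller than $j$.

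Summing the geometric series and multiplying by the factor $b$ coming from $t_{n+1}$ gives
\[
\#\{(t_1,\cdots,t_{n+1})\in R_b(n)\mid t_n=k\}=b\Bigl(b^{n-1}+\sum_{j=1}^{n-1}b^{n-1-j}\Bigr)=b^{n}+\frac{b^{n}-b}{b-1}=\frac{b^{n+1}-b}{b-1},
\]
which is the asserted value. This argument runs parallel to the proofs of Lemma \ref{lem_Lem27} and Lemma \ref{lem_Lem28}; the only delicate point — the ``main obstacle'', such as it is — is the bookkeeping of which membership conditions in $A_b(n)$ and $R_b(n)$ genuinely constrain the fixed coordinate $t_n$ (and the free coordinate $t_{n+1}$) and which do not, so that the counts of $(t_1,\cdots,t_{n-1})$ and of $t_{n+1}$ truly decouple.
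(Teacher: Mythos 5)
Your proof is correct and takes essentially the same approach as the paper's: both split on whether some $t_j=b$ occurs among $t_1,\cdots,t_{n-1}$, obtaining $b^n$ tuples when it does not and $b^{n-j}$ when $t_j=b$, and then sum the geometric series to get $\frac{b^{n+1}-b}{b-1}$. Your preliminary observation that the extra conditions defining $R_b(n)$ are vacuous once $t_n=k\leq b-2$ is fixed simply makes explicit what the paper leaves implicit.
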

\begin{proof}
Let $i = n$. Then it can be classified into two cases as follows
\begin{enumerate}
\item If $b \not\in \{t_1,\cdots,t_{n-1}\}$, then $t_1,\cdots,t_{n-1},t_{n+1} \in \{0,1,\cdots,b-1\}$. Hence, $\#\{(t_1,\cdots,t_{n+1}) \in R_b(n) | t_n = k \textrm{ and } b \not\in \{t_1,\cdots,t_{n-1}\}\} = b^n$ for each $k$.
\item If $b \in \{t_1,\cdots,t_{n-1}\}$, then $t_j = b$ for some $j \in \{1,\cdots,n-1\}$. Then $t_1 = \cdots = t_{j-1} = 0$, $t_{j+1}, \cdots, t_{n+1} \in \{0,1,\cdots,b-1\}$. Whence $\#\{(t_1,\cdots,t_{n+1}) \in R_b(n) | t_n = k \textrm{ and } t_j = b\} = b^{n-j}$ for each $k$.
\end{enumerate}
Hence, $\#\{(t_1,\cdots,t_{n+1}) \in R_b(n) | t_n = k\} = b^{n} + \sum_{j=1}^{n-1} b^{n-j} = b^{n} + b^{n-1} + \cdots + b = \frac{b^{n+1} - b}{b-1}$ for each $k \in \{1,\cdots,b-2\}$.
\end{proof}

\begin{lemma}\label{lem_Lem28_3}
Let $n \geq 2$ be an integer. Then,
\begin{gather*}
\#\{(t_1,\cdots,t_{n+1}) \in R_b(n) | t_n = b - 1\} = b^n.
\end{gather*}
\end{lemma}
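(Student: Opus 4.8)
The plan is to split the count according to the value of $t_{n+1}$, exactly in the spirit of the proof of Lemma \ref{lem_Lem28_2}, since the conditions defining $R_b(n)$ beyond those already imposed by $A_b(n)$ only become active when $t_{n+1}=b-1$.

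First I would handle the case $t_{n+1}\le b-2$. Here the two extra conditions in the definition of $R_b(n)$ are vacuous, so a sequence $(t_1,\dots,t_{n+1})$ with $t_n=b-1$ and $t_{n+1}\le b-2$ lies in $R_b(n)$ if and only if the prefix $(t_1,\dots,t_{n-1})\in\{0,1,\dots,b\}^{n-1}$ obeys the rule ``if $t_j=b$ then $t_i=0$ for all $i<j$''. Counting such prefixes is the same computation that underlies Lemmas \ref{lem_Lem28} and \ref{lem_Lem28_2}: either none of $t_1,\dots,t_{n-1}$ equals $b$, giving $b^{n-1}$ choices, or $t_j=b$ for a necessarily unique index $j\in\{1,\dots,n-1\}$, which forces $t_1=\dots=t_{j-1}=0$ and leaves $t_{j+1},\dots,t_{n-1}\in\{0,1,\dots,b-1\}$ free, i.e. $b^{n-1-j}$ choices. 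Summing the resulting geometric series, the number of admissible prefixes is $b^{n-1}+\sum_{j=1}^{n-1}b^{n-1-j}=\frac{b^{n}-1}{b-1}$, and multiplying by the $b-1$ admissible values of $t_{n+1}$ gives $(b-1)\cdot\frac{b^{n}-1}{b-1}=b^{n}-1$ sequences in this case.

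Next I would treat the case $t_{n+1}=b-1$. Then $t_n=b-1$ triggers the second condition in the definition of $R_b(n)$, which forces $t_1=\dots=t_{n-1}=0$; hence there is exactly one such sequence, namely $(0,\dots,0,b-1,b-1)$. Adding the two cases yields $\#\{(t_1,\dots,t_{n+1})\in R_b(n)\mid t_n=b-1\}=(b^{n}-1)+1=b^{n}$, as claimed.

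The only point that needs a little care is the observation, used implicitly above, that at most one coordinate among $t_1,\dots,t_{n-1}$ can equal $b$ — otherwise the rule ``if $t_j=b$ then $t_i=0$ for $i<j$'' applied to the larger such index would be violated — which is what makes the prefix count collapse to a clean geometric sum; the boundary case $n=2$ is consistent, since then the prefix is the single entry $t_1\in\{0,1,\dots,b\}$ and $\frac{b^{2}-1}{b-1}=b+1$ counts it correctly. I do not expect any genuine obstacle here; the argument is a direct combinatorial count paralleling the preceding lemmas.
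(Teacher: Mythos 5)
Your count is correct and is essentially the paper's own argument: the same ingredients appear (the single forced sequence $(0,\dots,0,b-1,b-1)$ when $t_{n+1}=b-1$, the observation that at most one coordinate can equal $b$, and the geometric sum $\sum_{j} b^{n-1-j}$ over the position of a possible $b$ in the prefix), only with the case split organized by the value of $t_{n+1}$ first rather than, as in the paper, by whether $b$ occurs among $t_1,\dots,t_{n-1}$. Both organizations yield $(b^n-1)+1=b^n$, so no further changes are needed.
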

\begin{proof}
Let $i = n$. Then it can be classified into two cases as follows
\begin{enumerate}
\item If $b \not\in \{t_1,\cdots,t_{n-1}\}$, then $t_1,\cdots,t_{n-1},t_{n+1} \in \{0,1,\cdots,b-1\}$. Besides if $t_{n+1} = b-1$, then $t_1 = \cdots = t_{n-1} = 0$. Hence, $\#\{(t_1,\cdots,t_{n+1}) \in R_b(n) | t_n = b-1 \textrm{ and } b \not\in \{t_1,\cdots,t_{n-1}\}\} = (b-1)b^{n-1} + 1$.
\item If $b \in \{t_1,\cdots,t_{n-1}\}$, then $t_j = b$ for some $j \in \{1,\cdots,n-1\}$. Then $t_1 = \cdots = t_{j-1} = 0$, $t_{j+1}, \cdots, t_{n+1} \in \{0,1,\cdots,b-1\}$ and $t_{n+1} \leq b-2$. Whence $\#\{(t_1,\cdots,t_{n+1}) \in R_b(n) | t_n = b-1 \textrm{ and } t_j = b\} = (b-1)b^{n-j-1}$.
\end{enumerate}
Hence, $\#\{(t_1,\cdots,t_{n+1}) \in R_b(n) | t_n = b-1\} = (b-1)b^{n-1} + 1 + (b-1)\sum_{j=1}^{n-1} b^{n-j-1} = (b-1)b^{n-1} + 1 + b^{n-1} - 1 = b^n$.
\end{proof}

\begin{lemma}\label{lem_Lem28_4}
Let $n \geq 2$ be an integer. Then,
\begin{gather*}
\#\{(t_1,\cdots,t_{n+1}) \in R_b(n) | t_{n+1} = k\} = \frac{b^{n+1} - 1}{b-1}
\end{gather*}
for each $k \in \{1,\cdots,b-2\}$.
\end{lemma}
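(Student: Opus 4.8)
The plan is to copy the bookkeeping of Lemmas~\ref{lem_Lem28_2} and~\ref{lem_Lem28_3}, using the fact that the two extra conditions that cut $R_b(n)$ out of $A_b(n)$ are activated only when $t_{n+1}=b-1$. Since here $k\le b-2$, pinning $t_{n+1}=k$ makes both conditions vacuous, so that
\[
\#\{(t_1,\dots,t_{n+1})\in R_b(n)\mid t_{n+1}=k\}
=\#\{(t_1,\dots,t_n)\in\{0,1,\dots,b\}^n\mid t_j=b\Rightarrow t_i=0\text{ for }i<j\},
\]
i.e.\ the number of length-$n$ ``staircase'' prefixes allowed by the description of $A_b(n)$ in Lemma~\ref{Lem_A}.

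First I would record the elementary observation that at most one of the coordinates $t_1,\dots,t_n$ can equal $b$: if $t_j=t_{j'}=b$ with $j<j'$, then the staircase rule at $j'$ forces $t_j=0$, a contradiction. This justifies the usual two-case split. If $b\notin\{t_1,\dots,t_n\}$, every $t_i$ ranges over $\{0,1,\dots,b-1\}$ independently, giving $b^n$ tuples. If $t_j=b$ for the unique index $j\in\{1,\dots,n\}$, then $t_1=\dots=t_{j-1}=0$ is forced while $t_{j+1},\dots,t_n$ range over $\{0,1,\dots,b-1\}$ (none of them can equal $b$, by the previous remark), contributing $b^{\,n-j}$ tuples.

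Summing the two cases then gives
\[
b^n+\sum_{j=1}^{n}b^{\,n-j}=\sum_{m=0}^{n}b^{m}=\frac{b^{n+1}-1}{b-1},
\]
which is the claimed count.

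I do not expect a genuine obstacle here: this is a routine finite geometric sum, structurally identical to the earlier lemmas of this section. The one point worth flagging is \emph{why} the range of the distinguished index $j$ now extends all the way to $n$ (rather than stopping at $n-1$ as in Lemma~\ref{lem_Lem28_2}): it is $t_{n+1}$, not $t_n$, that is held fixed, so the coordinate $t_n$ is itself free to take the value $b$, and the extra term $b^{\,n-n}=1$ this produces is exactly what turns $\tfrac{b^{n+1}-b}{b-1}$ into $\tfrac{b^{n+1}-1}{b-1}$. Checking that conditions (1)--(2) in the definition of $R_b(n)$ really are inert once $t_{n+1}=k\le b-2$ is the only place any care is needed.
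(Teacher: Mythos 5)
Your proposal is correct and follows essentially the same argument as the paper: split according to whether some $t_j=b$ occurs among $t_1,\dots,t_n$, count $b^n$ tuples in the no-$b$ case and $b^{n-j}$ when $t_j=b$, and sum the geometric series to get $\frac{b^{n+1}-1}{b-1}$. The only cosmetic difference is that the paper treats $j=n$ (contributing the extra term $1$) as a separate sub-case, while you fold it into the sum over $j\in\{1,\dots,n\}$.
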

\begin{proof}
Let $i = n+1$. Then it can be classified into two cases as follows
\begin{enumerate}
\item If $b \not\in \{t_1,\cdots,t_{n}\}$, then $t_1,\cdots,t_{n} \in \{0,1,\cdots,b-1\}$. Hence, $\#\{(t_1,\cdots,t_{n+1}) \in R_b(n) | t_{n+1} = k \textrm{ and } b \not\in \{t_1,\cdots,t_{n}\}\} = b^n$ for each $k$.
\item If $b \in \{t_1,\cdots,t_{n}\}$, it can be again classified into two cases as follows
\begin{enumerate}
\item If $t_j = b$ for some $j \in \{1,\cdots,n-1\}$, $t_1 = \cdots = t_{j-1} = 0$, $t_{j+1},\cdots, t_{n} \in \{0,1,\cdots,b-1\}$. Whence $\#\{(t_1,\cdots,t_{n+1}) \in R_b(n) | t_{n+1} = k \textrm{ and } t_j = b\} = b^{n-j}$ for each $k$.
\item If $t_{n} = b$, $t_1 = \cdots = t_{n-1} = 0$. Hence $\#\{(t_1,\cdots,t_{n+1}) \in R_b(n) | t_{n+1} = k \} = 1$ for each $k$.
\end{enumerate}  
\end{enumerate}
Hence, $\#\{(t_1,\cdots,t_{n+1}) \in R_b(n) | t_{n+1} = k\} = b^{n} + \sum_{j=1}^{n-1} b^{n-j} + 1 = b^n + b^{n-1} + \cdots + 1 = \frac{b^{n+1} - 1}{b-1}$ for each $k \in \{1,\cdots,b-2\}$.
\end{proof}

\begin{lemma}\label{lem_Lem28_5}
Let $n \geq 2$ be an integer. Then,
\begin{gather*}
\#\{(t_1,\cdots,t_{n+1}) \in R_b(n) | t_{n+1} = b - 1\} = b^{n}.
\end{gather*}
\end{lemma}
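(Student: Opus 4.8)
The plan is to fix the last coordinate at $t_{n+1}=b-1$ and count the admissible prefixes $(t_1,\dots,t_n)$ directly, following the same pattern used for Lemmas~\ref{lem_Lem28_3} and~\ref{lem_Lem28_4}. Since $t_{n+1}=b-1$, the defining conditions of $R_b(n)$ force $t_n\le b-1$, and together with the $A_b(n)$ condition (if a coordinate equals $b$, all earlier ones vanish) this means that the value $b$ can occur only among $t_1,\dots,t_{n-1}$, and in at most one of those positions. So I would split into the two cases $b\notin\{t_1,\dots,t_{n-1}\}$ and $b\in\{t_1,\dots,t_{n-1}\}$, exactly as in the preceding lemmas.

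In the first case $t_1,\dots,t_{n-1}\in\{0,1,\dots,b-1\}$ and $t_n\in\{0,1,\dots,b-1\}$, but the second $R_b(n)$ condition says that $t_n=b-1$ forces $t_1=\cdots=t_{n-1}=0$; hence there are $(b-1)b^{n-1}$ such sequences with $t_n\le b-2$ together with the single sequence having $t_n=b-1$, for a total of $(b-1)b^{n-1}+1$. In the second case let $t_j=b$ for the (necessarily unique) index $j\in\{1,\dots,n-1\}$; then $t_1=\cdots=t_{j-1}=0$, the coordinates $t_{j+1},\dots,t_{n-1}$ range freely over $\{0,1,\dots,b-1\}$, and $t_n$ cannot equal $b-1$ (that would again force $t_j=0$), so $t_n\in\{0,1,\dots,b-2\}$. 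This gives $(b-1)b^{n-1-j}$ sequences for each such $j$, and summing the geometric series yields $\sum_{j=1}^{n-1}(b-1)b^{n-1-j}=b^{n-1}-1$. Adding the two contributions gives $(b-1)b^{n-1}+1+b^{n-1}-1=b^{n}$, as claimed.

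I expect the only delicate point to be the bookkeeping of the interaction between the two ``prefix-zero'' constraints — the one from $A_b(n)$ (a coordinate equal to $b$ annihilates everything before it) and the one from $R_b(n)$ (when $t_{n+1}=b-1$, a coordinate $t_n=b-1$ annihilates $t_1,\dots,t_{n-1}$) — so as to neither double count the boundary sequences nor omit them, in particular the single all‑but‑two‑zero sequence $(0,\dots,0,b-1,b-1)$. Everything else reduces to the same geometric summation $1+b+\cdots+b^{n-1}=\frac{b^n-1}{b-1}$ already used repeatedly above, so no new computational difficulty is anticipated.
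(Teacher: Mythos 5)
Your proposal is correct and follows essentially the same argument as the paper: split on whether some coordinate among $t_1,\dots,t_{n-1}$ equals $b$, count $(b-1)b^{n-1}+1$ sequences in the first case (handling the special sequence with $t_n=b-1$ separately) and $(b-1)b^{n-1-j}$ for each position $j$ of the value $b$ in the second, summing to $b^n$. The bookkeeping of the two prefix-zero constraints matches the paper's treatment exactly, so there is nothing further to add.
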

\begin{proof}
Let $i = n+1$. Then it can be classified into two cases as follows
\begin{enumerate}
\item If $b \not\in \{t_1,\cdots,t_{n}\}$, then $t_1,\cdots,t_{n} \in \{0,1,\cdots,b-1\}$. Besides if $t_{n} = b-1$, then $t_1 = \cdots = t_{n-1} = 0$. Hence, $\#\{(t_1,\cdots,t_{n+1}) \in R_b(n) | t_n = b-1 \textrm{ and } b \not\in \{t_1,\cdots,t_{n-1}\}\} = (b-1)b^{n-1} + 1$.
\item If $b \in \{t_1,\cdots,t_{n}\}$, then $t_j = b$ for some $j \in \{1,\cdots,n-1\}$. Then $t_1 = \cdots = t_{j-1} = 0$, $t_{j+1}, \cdots, t_{n-1} \in \{0,1,\cdots,b-1\}$, $t_n \leq b-2$ and $t_{n+1} = b - 1$. Hence $\#\{(t_1,\cdots,t_{n+1}) \in R_b(n) | t_{n+1} = b-1 \textrm{ and } t_j = b\} = (b-1)b^{n-j-1}$.
\end{enumerate}
Hence, $\#\{(t_1,\cdots,t_{n+1}) \in R_b(n) | t_{n+1} = b-1\} = (b-1)b^{n-1} + 1 + (b-1)\sum_{j=1}^{n-1} b^{n-j-1} = (b-1)b^{n-1} + 1 + b^{n-1} - 1 = b^{n}$.
\end{proof}
By combining the above lemmas, we obtain the genus of $T_b(n)$.
\begin{theorem}
Let $n,b \in \mathbb{N}$ and $b \geq 2$ and $T_b(n)$ be the Thabit numerical semigroup base $b$ associated to $n$. Then,
\begin{gather*}
g(T_b(n)) = \frac{(b^3 + b^2 - b - 1)b^{2n} + \{(n-1)(b^2 - 1) - 2\}b^n - 2b + 4}{2}
\end{gather*}
\end{theorem}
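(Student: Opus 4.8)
The plan is to feed the explicit description of the Ap\'ery set into the genus formula. Apply Lemma~\ref{lem_F_g}(2) with $x = s_0 = (b+1)\cdot b^n - 1$, so that
\[
g(T_b(n)) \;=\; \frac{1}{s_0}\sum_{w\in Ap(T_b(n),s_0)} w \;-\; \frac{s_0-1}{2}.
\]
By Lemma~\ref{Lem_Ap_k<n} the map $(t_1,\dots,t_{n+1})\mapsto \sum_{j=1}^{n+1} t_j s_j$ is a bijection from $R_b(n)$ onto $Ap(T_b(n),s_0)$, so interchanging the order of summation gives $\sum_{w} w = \sum_{j=1}^{n+1} s_j\,\sigma_j$, where $\sigma_j := \sum_{(t_1,\dots,t_{n+1})\in R_b(n)} t_j = \sum_k k\cdot\#\{(t_1,\dots,t_{n+1})\in R_b(n)\mid t_j=k\}$, the sum over $k$ running through $\{1,\dots,b\}$ for $j\le n$ and through $\{1,\dots,b-1\}$ for $j=n+1$. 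This reduces everything to the counting lemmas already proved.

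Next I would evaluate $\sigma_j$ in the three regimes $1\le j\le n-1$, $j=n$, $j=n+1$. For $1\le j\le n-1$, Lemma~\ref{lem_Lem28} gives $\#\{t_j=k\}=(b+1)(b^{n-1}-b^{n-j-1})$ for each $k\in\{1,\dots,b-1\}$ and Lemma~\ref{lem_Lem27} gives $\#\{t_j=b\}=(b^2-1)b^{n-j-1}$, so $\sigma_j = (b+1)(b^{n-1}-b^{n-j-1})\frac{(b-1)b}{2} + b(b^2-1)b^{n-j-1}$. For $j=n$, Lemmas~\ref{lem_Lem28_2}, \ref{lem_Lem28_3} and \ref{lem_Lem27} give $\sigma_n = \frac{b^{n+1}-b}{b-1}\cdot\frac{(b-2)(b-1)}{2} + (b-1)b^n + b(b-1)$, and for $j=n+1$, Lemmas~\ref{lem_Lem28_4} and \ref{lem_Lem28_5} give $\sigma_{n+1} = \frac{b^{n+1}-1}{b-1}\cdot\frac{(b-2)(b-1)}{2} + (b-1)b^n$. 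Now substitute $s_j = (b+1)b^{n+j}-1$ and split $\sum_{j=1}^{n+1} s_j\sigma_j$ into the blocks $j\le n-1$, $j=n$, $j=n+1$; the first block is a combination of a few geometric series in $b^j$, each of which telescopes to a closed form, and the $b^{n-1}$-terms are the ones that contribute the factor linear in $n$ (since $\sum_{j=1}^{n-1} 1 = n-1$). Collecting all terms yields $\sum_w w$ as an explicit expression in $b$ and $b^n$.

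Finally I would divide by $s_0=(b+1)b^n-1$ and subtract $\frac{s_0-1}{2}=\frac{(b+1)b^n-2}{2}$. The cleanest way to perform the division is to treat $\sum_w w$ as a polynomial in the single variable $X=b^n$ (with coefficients in $\mathbb{Z}[b]$) and carry out the division by $(b+1)X-1$; the remainder must vanish because $g(T_b(n))$ is an integer, which can also be checked directly via $(b+1)b^n\equiv 1\pmod{s_0}$. Simplifying the quotient and subtracting $\frac{s_0-1}{2}$ gives the claimed value $\frac{(b^3+b^2-b-1)b^{2n}+\{(n-1)(b^2-1)-2\}b^n-2b+4}{2}$. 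Since Lemmas~\ref{lem_Lem27}--\ref{lem_Lem28_5} require $n\ge 2$, the cases $n=0$ and $n=1$ are disposed of separately: for $n=0$, $T_b(0)=\big<\{b,\,b^2+b-1\}\big>$ has genus $\frac{(b-1)(b^2+b-2)}{2}$ by the two-generator formula, which matches the expression at $n=0$; for $n=1$ one computes $\sum_{w\in Ap(T_b(1),s_0)} w$ directly from the list $R_b(1)$. The main obstacle is the bulk algebra in the second and third steps --- keeping the several geometric sums correctly aligned and, above all, confirming the exact divisibility by $s_0$ so that the overall factor $\frac12$ and the compact closed form drop out cleanly; the combinatorial input is entirely supplied by the lemmas already established.
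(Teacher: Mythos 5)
Your proposal follows essentially the same route as the paper's proof: apply Lemma~\ref{lem_F_g}(2) with $x=s_0$, expand $\sum_{w\in Ap(T_b(n),s_0)}w$ coordinatewise over $R_b(n)$ using Lemmas~\ref{lem_Lem27}--\ref{lem_Lem28_5} (your $\sigma_j$ are exactly the weighted counts the paper sums), and then simplify and divide by $s_0$. The only difference is cosmetic organization plus your explicit treatment of $n=0,1$ (which the paper's proof, assuming $n\ge 2$, leaves implicit), and that check is consistent with the stated formula.
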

\begin{proof}
Suppose that $n \geq 2$ and from lemma \ref{lem_F_g}, we know the formula for genus of $T_b(n)$ as follows
\begin{gather*}
g(T_b(n)) = \frac{1}{s_0} (\sum_{(t_1,\cdots,t_{n+1} \in R_b(n)} t_1 s_1 + \cdots + t_{n+1}s_{n+1}) - \frac{s_0-1}{2}.
\end{gather*}
We start with
\begin{align*}
& \sum_{(t_1,\cdots,t_{n+1}) \in R_b(n)} (t_1 s_1 + \cdots + t_{n+1} s_{n+1}) \\
= & \sum_{k=1}^{b-1} \sum_{(t_1,\cdots,t_{n+1}) \in R_b(n), t_1 = k} ks_1 +  \cdots +  \sum_{k=1}^{b-1} \sum_{(t_1,\cdots,t_{n+1}) \in R_b(n), t_{n-1} = k} ks_{n-1} \\
& + \quad \sum_{k=1}^{b-2} \sum_{(t_1,\cdots,t_{n+1}) \in R_b(n), t_{n} = k} ks_n +  \sum_{(t_1,\cdots,t_{n+1}) \in R_b(n), t_{n} = b-1} (b-1)s_n \\
& + \sum_{k=1}^{b-2} \sum_{(t_1,\cdots,t_{n+1}) \in R_b(n), t_{n+1} = k} ks_{n+1} +  \sum_{(t_1,\cdots,t_{n+1}) \in R_b(n), t_{n+1} = b-1} (b-1)s_{n+1} \\
& + \quad  \sum_{(t_1,\cdots,t_{n+1}) \in R_b(n), t_1 = b} bs_1 + \cdots + \sum_{(t_1,\cdots,t_{n+1}) \in R_b(n), t_{n} = b} bs_n.
\end{align*}
By lemmas \ref{lem_Lem28}, \ref{lem_Lem28_2}, \ref{lem_Lem28_3}, \ref{lem_Lem28_4}, \ref{lem_Lem28_5}, we obtain that 
\begin{align*}
& \sum_{(t_1,\cdots,t_{n+1}) \in R_b(n)} (t_1 s_1 + \cdots + t_{n+1} s_{n+1}) \\
= & \sum_{i=1}^{n-1} \frac{(b-1)b}{2} \cdot (b + 1)(b^{n-1} - b^{n-i-1}) \cdot \{(b+1)\cdot b^{n+i} - 1\} \\
& \quad + \frac{(b-2)(b-1)}{2} \cdot [ \frac{b^{n+1} - b}{b-1} \cdot \{(b+1)\cdot b^{2n} - 1\} + \frac{b^{n+1} - 1}{b-1} \cdot \{(b+1)\cdot b^{2n+1} - 1\} ]\\
& \quad + b^n (b-1) \cdot \{(b+1)\cdot (b^{2n} + b^{2n+1}) - 2\} \\
& \quad + \sum_{i=1}^{n-1} (b^2 - 1)b^{n-i} \{(b+1)\cdot b^{n+i} - 1\} + (b-1)b\{(b+1)\cdot b^{2n} - 1\} \\
= & \frac{(b-1)b(b+1)}{2} \{ (b+1)\cdot \frac{b^{3n-1} - b^{2n}}{b-1} - (n-1)b^{n-1} - (n-1)(b+1)b^{2n-1} + \frac{b^{n-1} - 1}{b-1}\} \\
& \quad + \frac{b-2}{2} \cdot [(b^{n+1} - b) \cdot \{(b+1)\cdot b^{2n} - 1\} + (b^{n+1} - 1) \cdot \{(b+1)\cdot b^{2n+1} - 1\}] \\
& \quad + b^n (b-1) \cdot \{(b+1)\cdot (b^{2n} + b^{2n+1}) - 2\} \\
& \quad + (b^2 - 1)(b+1)(n-1)b^{2n} - (b+1)(b^n - b) + (b-1)b\{(b+1)\cdot b^{2n} - 1\} \\
= & \frac{b^2 + 2b + 1}{2} (b^{3n} - b^{2n+1}) - \frac{(b^2 - 1)(n-1)}{2}b^{n} - \frac{(b^3 + b^2 - b - 1)(n-1)}{2}b^{2n} \\
& \quad + \frac{(b+1)(b^{n} - b)}{2} + \frac{(b-2)(b+1)}{2} b^{3n+1} - \frac{(b-2)(b+1)}{2}b^{2n+1} - \frac{b-2}{2}b^{n+1} \\
& \quad + \frac{(b-2)b}{2} + \frac{(b-2)(b+1)}{2} b^{3n+2} - \frac{(b-2)(b+1)}{2}b^{2n+1} - \frac{b-2}{2} b^{n+1} + \frac{b-2}{2} \\
& \quad + b^n (b-1)(b+1) b^{2n} + b^n (b-1)(b+1) b^{2n+1} - 2b^n (b-1) \\
& \quad + (b^3 + b^2 - b - 1)(n-1)b^{2n} - (b+1)b^n + 2b + (b^2-1)b^{2n+1} \\
= & \frac{b^4 + 2b^3 - 2b - 1}{2} b^{3n} + \frac{-b^3 + b + (n-1)(b^3 + b^2 - b - 1)}{2}b^{2n} \\
& \quad + \frac{-2b^2 - b + 3 - (b^2 - 1)(n-1)}{2}b^n + b-1 \\
= & \{(b+1)\cdot b^n - 1\} \{ \frac{b^3 + b^2 - b - 1}{2} b^{2n} + \frac{b-1 + (n-1)(b^2 - 1)}{2}b^n  -b + 1\} \\
= & \frac{1}{2} s_0 [(b^3 + b^2 - b - 1)b^{2n} + \{b-1 + (n-1)(b^2 - 1)\}b^n - 2b + 2].
\end{align*}
Hence,
\begin{align*}
g(T_b(n)) & = \frac{1}{2} [(b^3 + b^2 - b - 1)b^{2n} + \{b-1 + (n-1)(b^2 - 1)\}b^n - 2b + 2] - \frac{(b+1)\cdot b^n - 2}{2} \\
& = \frac{(b^3 + b^2 - b - 1)b^{2n} + \{(n-1)(b^2 - 1) - 2\}b^n - 2b + 4}{2}.
\end{align*}
\end{proof}
We summarize all of our results by suggesting an example.
\begin{example}
In the case of $n=1$, $T_b(1) = \big< s_0,s_1,s_2 \big> = \big< b^2 + b - 1, b^3 + b^2 - 1, b^4 + b^3 - 1\big>$ and we obtain
\begin{align*}
& Ap(T_b(1), s_0) \\
= & \{0,s_1, \cdots, bs_1, \\
& \quad s_2, s_1 + s_2 , \cdots, bs_1 + s_2, \\
& \quad 2s_2, s_1 + 2s_2, \cdots, bs_1 + 2s_2, \\
& \quad \cdots \\
& \quad (b-1)s_2, s_1 + (b-1)s_2 , \cdots, (b-1)s_1 + (b-1)s_2\}.
\end{align*}
Note that $\# Ap(T_b (1), s_0) = (b+1) + (b-1)(b+1) - 1 = b^2 + b - 1 = s_0$, $\max(Ap(T_b(1)),s_0) = (b-1)s_1 + (b-1)s_2$ and $F(T_b(1)) = (b-1)s_1 + (b-1)s_2 - s_0 = (b-1)(b^4 + 2b^3 + b^2 - 2) - (b^2 + b - 1) = (b^3 + b^2 - b - 1)b^2 - (b+1)b^1 - 2b + 3$ and $g(T_b(1)) = \frac{b^5 + b^4 - b^3 - b^2 - 2b + 4}{2}$. Let $b = 3$. Then we obtain the more detailed example as follows:
\begin{enumerate}
\item $T_3(1) = \big< 11, 35, 107\big>$. Note that $e(T_3(1)) = 3 = 1 + 2$.
\item $Ap(T_3(1), s_0) = \{0,s_1,2s_1,3s_1,s_2,s_1 + s_2, 2s_1 + s_2, 3s_1 + s_2,2s_2,s_1+2s_2,2s_1+2s_2\}$. Note that $\# Ap(T_3 (1), s_0) = 11 = s_0$.
\item $\max(Ap(T_3(1)),s_0) = 2 \cdot 35 + 2 \cdot 107 = 284$ and hence $F(T_3(1)) = \max(Ap(T_3(1)),s_0) - s_0 = 284 - 11 = 273$.
\item $g(T_3(1)) = \frac{3^5 + 3^4 - 3^3 - 3^2 - 2\cdot 3 + 4}{2} = 143$.
\end{enumerate}
\end{example}
\section{The Results of the semigroups $\big< \{(b+1)\cdot b^{n+i} + 1 | i \in \mathbb{N}\big>$ for $b \not\equiv 1\pmod{3}$}
\begin{theorem}\label{Thm_minimal_2}
If $n,b \in \mathbb{N}$, $b \geq 2$ and $b \not\equiv 1 \pmod{3}$, then $\big<\{(b+1)\cdot b^{n+i} + 1| i \in \{0,1,\cdots,n+1\}\}\big>$ is a minimal system of generators.
\end{theorem}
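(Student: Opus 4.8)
The plan is to follow the template of the proof of Theorem~\ref{Thm_minimal}, because replacing the constant term $-1$ by $+1$ in the generators does not interfere with the reduction argument used there. Write $s_j' = (b+1)\cdot b^{n+j}+1$ for $j \in \{0,1,\dots,n+1\}$. By arguments parallel to Lemmas~\ref{Lem_2a_2m_eq}--\ref{Lem_rev_minimal_1} (with $+1$ everywhere in place of $-1$) one first checks that $\{s_0',s_1',\dots,s_{n+1}'\}$ is a system of generators of $T_{b'}(n)$, so it remains only to prove minimality. Since $s_0' < s_1' < \cdots < s_{n+1}'$, the generator $s_i'$ can be discarded only if $s_i' \in \langle s_0',\dots,s_{i-1}'\rangle$, because any summand $s_j'$ with $j>i$ would already overshoot $s_i'$. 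Hence it suffices to show, for every $i \in \{1,\dots,n+1\}$, that $s_i' \notin \langle s_0',\dots,s_{i-1}'\rangle$.

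First I would assume the contrary: $s_i' = \sum_{j=0}^{i-1} a_j s_j'$ for some $a_j \in \mathbb{N}$ not all zero. The main step is a congruence modulo $(b+1)\cdot b^n$: since $b^{n+j}$ is a multiple of $b^n$ for every $j \geq 0$, each of $s_0',\dots,s_{i-1}'$ and $s_i'$ is congruent to $1$ modulo $(b+1)\cdot b^n$, so reducing the displayed identity yields $\sum_{j=0}^{i-1} a_j \equiv 1 \pmod{(b+1)\cdot b^n}$. As $\sum_{j=0}^{i-1} a_j$ is a positive integer, it must equal $1$ or be at least $1+(b+1)\cdot b^n$.

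Then I would rule out both alternatives. If $\sum_{j=0}^{i-1} a_j = 1$, exactly one coefficient is $1$ and the rest are $0$, so $s_i' = s_j'$ for some $j<i$, contradicting $s_j' < s_i'$. If $\sum_{j=0}^{i-1} a_j \geq 1+(b+1)\cdot b^n$, then bounding each $s_j'$ from below by $s_0' = 1+(b+1)\cdot b^n$ gives
\[
\sum_{j=0}^{i-1} a_j s_j' \;\geq\; s_0'\bigl(1+(b+1)\cdot b^n\bigr) \;=\; \bigl(1+(b+1)\cdot b^n\bigr)^2 ,
\]
and a comparison of the dominant terms shows $\bigl(1+(b+1)\cdot b^n\bigr)^2 > (b+1)\cdot b^{n+i}+1 = s_i'$ for every $i \leq n+1$, since already $(b+1)^2\cdot b^{2n} > (b+1)\cdot b^{n+i}$ in that range. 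In either case the assumed identity is impossible, so no $s_i'$ is redundant and the generating set is minimal.

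I expect the only delicate point to be the final inequality; it is precisely what fixes the exponent bound $i \leq n+1$, that is, the reason the minimal system of generators ends at $s_{n+1}'$. I would also note that the hypothesis $b \not\equiv 1 \pmod 3$ is not used anywhere in this minimality argument: it is needed only to guarantee $\gcd(\{s_i'\}) = 1$ and hence that $T_{b'}(n)$ is a numerical semigroup in the first place, since each $s_i' \equiv 3 \pmod{b-1}$ and so $3$ would be a common divisor whenever $3 \mid b-1$.
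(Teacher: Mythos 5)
Your minimality argument is correct and is exactly the analogue of the paper's proof of Theorem~\ref{Thm_minimal} (the only proof available for comparison, since Section~4 states its results without proofs); in fact it is more complete, because you rule out redundancy of every $s_i'$ rather than only the top generator, and the congruence modulo $(b+1)\cdot b^n$ together with the estimate $\bigl(1+(b+1)b^n\bigr)^2 > (b+1)b^{n+i}+1$ for $i \le n+1$ is sound. Your closing remark about the role of $b \not\equiv 1 \pmod{3}$ is also accurate.

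The genuine gap is the step you delegate: that $\{s_0',\dots,s_{n+1}'\}$ generates $T_{b'}(n)$ ``by arguments parallel to Lemmas~\ref{Lem_2a_2m_eq}--\ref{Lem_rev_minimal_1} with $+1$ in place of $-1$.'' This is not a sign flip. For the first kind one has $s_{i+1}=bs_i+(b-1)$, so Lemma~\ref{Lem_2s} concerns $bt+(b-1)$ and the key identity expresses $s_{n+2}$ as $\bigl((b-1)s_0+b-2\bigr)s_0+s_1+s_n$; for the second kind $s_{i+1}'=bs_i'-(b-1)$, the analogous lemma must concern $bt-(b-1)$, and the decomposition that actually works has the different shape $s_{n+2}' = \bigl\{(b^2-1)b^n-(b-1)\bigr\}s_0' + b\,s_{n-1}'$, which uses $s_{n-1}'$ and hence exists only for $n\ge 1$. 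This is not cosmetic: for $n=0$ the generation claim is false. For example, with $b=2$, $T_{2'}(0)=\langle 4,7,13,25,\dots\rangle$ and $13=s_2'\notin\langle 4,7\rangle$; more generally, modulo $s_0'=b+2$ one has $s_1'\equiv 3$ and $s_2'\equiv -3$, so any representation of $s_2'$ in $\langle s_0',s_1'\rangle$ forces the coefficient of $s_1'$ to be at least $b+1$ (since $\gcd(3,b+2)=1$ under the hypothesis $b\not\equiv 1\pmod 3$), and $(b+1)s_1'=b^3+2b^2+2b+1>b^3+b^2+1=s_2'$. Hence $\{s_0',s_1'\}$ is not a system of generators of $T_{b'}(0)$, and the statement as written (which allows $n=0$) cannot be proved; your argument goes through only once the generation half is established with the corrected identities, which requires $n\ge 1$ (the case $n=0$ must be excluded or treated with a larger generating set).
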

\begin{corollary}
Let $n,b \in \mathbb{N}$, $b \geq 2$ and $b \not\equiv 1 \pmod{3}$ and let $T_{b'}(n)$ be a Thabit numerical semigroup of the second kind base $b$ associated with $n$ and $b$. Then $e(T_b(n)) = n + 2$.
\end{corollary}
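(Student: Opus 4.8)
The plan is to obtain the corollary immediately from Theorem~\ref{Thm_minimal_2} together with the definition of the embedding dimension. Write $g_i := (b+1)\cdot b^{n+i}+1$ for $i\in\mathbb{N}$, so that $T_{b'}(n)=\langle\{g_i\mid i\in\mathbb{N}\}\rangle$. Recall that $e(S)$ is the cardinality of the \emph{unique} minimal system of generators of $S$; hence it is enough to pin down that system and count it.

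First I would record the $+1$-analogue of Lemma~\ref{Lem_rev_minimal_1}, namely $T_{b'}(n)=\langle\{g_0,g_1,\dots,g_{n+1}\}\rangle$, i.e. the generators $g_i$ with $i\ge n+2$ are redundant. The mechanism mirrors the $T_b(n)$ case: from the identity $b g_j-(b-1)=g_{j+1}$ one gets the closure property that $bt-(b-1)\in T_{b'}(n)$ for every $t\in T_{b'}(n)\setminus\{0\}$ (write $t=g_j+t'$ with $t'\in T_{b'}(n)$; then $bt-(b-1)=g_{j+1}+bt'$, and both summands lie in $T_{b'}(n)$). Granting an explicit non-negative-coefficient identity that already places $g_{n+2}=(b+1)\cdot b^{2n+2}+1$ inside $\langle g_0,\dots,g_{n+1}\rangle$, this closure property applied inside $\langle g_0,\dots,g_{n+1}\rangle$ yields $g_{n+3},g_{n+4},\dots\in\langle g_0,\dots,g_{n+1}\rangle$ by induction, which gives the asserted equality. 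Combining this with Theorem~\ref{Thm_minimal_2} (minimality of $\{g_0,\dots,g_{n+1}\}$) and the uniqueness of the minimal system of generators shows that $\{g_0,\dots,g_{n+1}\}$ is precisely the minimal system of generators of $T_{b'}(n)$.

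It then remains only to count this set. Since $b\ge2$, the map $i\mapsto(b+1)\cdot b^{n+i}+1$ is strictly increasing, hence injective on $\{0,1,\dots,n+1\}$, so $\{g_0,\dots,g_{n+1}\}$ has exactly $n+2$ elements, and therefore $e(T_{b'}(n))=n+2$. As a sanity check, $n=0$ gives $T_{b'}(0)=\langle\, b+2,\; b^2+b+1\,\rangle$, a $2$-generated numerical semigroup, which also illustrates why $b\not\equiv1\pmod 3$ is imposed, since $\gcd(b+2,b^2+b+1)=\gcd(3,b-1)$.

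The counting step is trivial; all the real content is imported. The one genuine obstacle lives in the background material — the $+1$-version of Lemma~\ref{Lem_rev_minimal_1} and Theorem~\ref{Thm_minimal_2} — and it is the explicit identity expressing $(b+1)\cdot b^{2n+2}+1$ as a non-negative combination of $g_0,\dots,g_{n+1}$: the naive recursion $(b+1)\cdot b^{2n+2}+1=b^2 g_n-(b^2-1)$ carries a subtraction, so one expects the correct identity to involve a quadratic term such as $g_0^2$ together with a linear correction whose availability is exactly what forces the hypothesis $b\not\equiv1\pmod 3$ (equivalently $\gcd(3,b-1)=1$, equivalently $\gcd$ of the generators equal to $1$, so that $T_{b'}(n)$ is a numerical semigroup at all).
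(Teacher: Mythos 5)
Your argument is correct and matches the paper's (implicit) route: the corollary is an immediate consequence of Theorem~\ref{Thm_minimal_2} together with the uniqueness of the minimal system of generators, and counting the $n+2$ distinct generators $(b+1)\cdot b^{n+i}+1$, $i\in\{0,\dots,n+1\}$, gives $e(T_{b'}(n))=n+2$. Your additional sketch of why $\{g_0,\dots,g_{n+1}\}$ generates $T_{b'}(n)$ (with the admittedly ``granted'' identity for $g_{n+2}$) concerns the theorem rather than the corollary, so it does not affect the correctness of this deduction.
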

\begin{definition}
Let $b \geq 2$ and $b \not\equiv 1\pmod{3}$. Then $R_{b'}(n) = \{(t_1,t_2,\cdots,t_{n+1}) | t_i \in \{0,1,\cdots,b\}\}$ is defined by
\begin{enumerate}
\item If $t_i = b$, $t_j = 0$ for all $1 \leq j < i$.
\item $t_{n+1} \leq b-1$.
\item If $t_{n+1} = b-1$, then $t_n \leq b-1$ and if $(t_n, t_{n+1}) = (b-1,b-1)$, $t_1 \leq 2$ and all $t_i = 0$ for $i \neq 1,n,n+1$.
\end{enumerate}
\end{definition}
Then we obtain the following theorem:
\begin{theorem} \label{thm_Ap_explicit}
Let  $b \geq 2$ and $b \not\equiv 1\pmod{3}$. Then we obtain
\begin{gather*}
Ap(T_{b'}(n),s_0) = \{\sum_{i=1}^{n+1} t_i s_i | (t_1,\cdots,t_{n+1}) \in R_{b'}(n)\}.
\end{gather*}
\end{theorem}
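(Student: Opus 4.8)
\emph{Proof plan.} The plan is to carry the argument used for $T_b(n)$ in Sections 2 and 3 over to the generators $s_i:=(b+1)b^{n+i}+1$: first show that every Ap\'ery element has a representation $\sum_{i=1}^{n+1}t_is_i$ whose coefficient vector lies in a coarse set $A_{b'}(n)$, cut out by the obvious carry-free conditions, then cut $A_{b'}(n)$ down to $R_{b'}(n)$, and finally close the argument by the counting identity $\#R_{b'}(n)=s_0$. All congruences are taken modulo $s_0=(b+1)b^n+1$, for which $s_i\equiv 1-b^i\pmod{s_0}$; since $s_0\equiv 3\pmod{b-1}$, the hypothesis $b\not\equiv1\pmod 3$ is exactly what yields $\gcd(b-1,s_0)=1$, which the descent and maximality arguments below require. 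It is convenient to assume $s_1,s_n,s_{n+1}$ pairwise distinct, i.e.\ $n\ge 2$; the remaining cases are finite computations.

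\emph{Carry identities.} First I would record the $(+1)$-versions of Lemma~\ref{Lem_push_coefficients_1}: for $1\le i\le j\le n$,
\[
s_i+bs_j=bs_{i-1}+s_{j+1},\qquad bs_j=s_{j+1}+(b-1),
\]
together with the boundary identity for $s_i+bs_{n+1}$, $1\le i\le n+1$ (the analogue of part (2) of that lemma), which one obtains from $bs_{n+1}=b^2s_n-b(b-1)$, $(b-1)=bs_0-s_1$ and $(b^2-1)=b^2s_0-s_2$. These are polynomial identities verified by expanding. From $bs_j=s_{j+1}+(b-1)$ one also gets the weak descent fact that $x+(b-1)\in T_{b'}(n)$ whenever $x\in T_{b'}(n)$ is not a multiple of $s_0$; unlike the Thabit case, there is no clean step of size $b-1$ downward, and the obstruction to it is exactly the origin of the extra clause in $R_{b'}(n)$. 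One also needs a lemma of the type of Lemma~\ref{Lem_2s}, guaranteeing that the overflow terms produced by these identities (such as $(b+1)b^{n}(b^{n+2}-1)$) actually lie in $T_{b'}(n)$.

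\emph{From the Ap\'ery set to $R_{b'}(n)$.} Given $x\in Ap(T_{b'}(n),s_0)$, every expression $x=\sum_{j=0}^{n+1}a_js_j$ has $a_0=0$, since otherwise $x-s_0\in T_{b'}(n)$. Applying the carry identities while tracking a strictly decreasing nonnegative integer quantity, as in the proof of Lemma~\ref{Lem_A}, rewrites $x$ in the form $\sum_{i=1}^{n+1}t_is_i$ with every $t_i\le b$, at most one $t_i$ equal to $b$ (and then $t_{i'}=0$ for all $i'<i$), and $t_{n+1}\le b-1$ — the last assertion being the analogue of Corollary~\ref{cor_n1_k2}, proved by combining $s_i+bs_{n+1}\notin Ap(T_{b'}(n),s_0)$ (for every $i$) with a residue count to exclude $bs_{n+1}\in Ap(T_{b'}(n),s_0)$. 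This gives $Ap(T_{b'}(n),s_0)\subseteq\{\sum_{i=1}^{n+1}t_is_i:(t_1,\dots,t_{n+1})\in A_{b'}(n)\}$. The hard part will be the refinement from $A_{b'}(n)$ to $R_{b'}(n)$: one must show that $t_{n+1}=b-1$ forces $t_n\le b-1$, and that $(t_n,t_{n+1})=(b-1,b-1)$ forces $t_1\le 2$ and $t_i=0$ for $i\notin\{1,n,n+1\}$. The decisive identity, special to the $+1$ case, is
\[
3s_1+(b-1)s_n+(b-1)s_{n+1}\equiv 0\pmod{s_0},
\]
which exhibits a positive multiple of $s_0$, hence a non-Ap\'ery element; the remaining cases are dispatched by applying the carry identities to $s_i+(b-1)s_n+(b-1)s_{n+1}$ and to $bs_n+(b-1)s_{n+1}$ and checking that subtracting $s_0$ leaves one inside $T_{b'}(n)$. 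This step has no counterpart in the Thabit-number case and is where the main difficulty lies.

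\emph{Counting and conclusion.} Finally I would count $\#R_{b'}(n)$ by the case split of Lemma~\ref{Lem_Ap_k<n}: the vectors with $t_{n+1}\le b-2$ contribute $b^{n+1}-1$; those with $t_{n+1}=b-1$ and $t_n\le b-2$ contribute $b^n-1$; and those with $t_n=t_{n+1}=b-1$ contribute $3$ (namely $t_1\in\{0,1,2\}$). Hence $\#R_{b'}(n)=b^{n+1}+b^n+1=(b+1)b^n+1=s_0$. Since $Ap(T_{b'}(n),s_0)$ has $s_0$ elements and is contained in the set on the right, while
\[
\#\Big\{\textstyle\sum_{i=1}^{n+1}t_is_i:(t_1,\dots,t_{n+1})\in R_{b'}(n)\Big\}\le\#R_{b'}(n)=s_0,
\]
the displayed inclusion must be an equality, which is the assertion of the theorem.
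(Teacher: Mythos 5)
The paper never proves Theorem~\ref{thm_Ap_explicit}: Section~4 is explicitly a list of statements ``without the proofs,'' so there is nothing of the author's to compare you against, and your plan has to be judged on its own. Its skeleton is the right one, namely the scheme of Lemma~\ref{Lem_Ap_k<n} for $T_b(n)$: prove the containment $Ap(T_{b'}(n),s_0)\subseteq\{\sum_{i=1}^{n+1}t_is_i\mid t\in R_{b'}(n)\}$ and then conclude by comparing cardinalities. The computations you actually carry out are correct: $s_i\equiv 1-b^i\pmod{s_0}$, $\gcd(b-1,s_0)=\gcd(3,b-1)=1$ precisely when $b\not\equiv1\pmod 3$, the relation $3s_1+(b-1)s_n+(b-1)s_{n+1}\equiv0\pmod{s_0}$ (so that element, being a positive multiple of $s_0$ in the semigroup, is not an Ap\'ery element), and the count $(b^{n+1}-1)+(b^n-1)+3=(b+1)b^n+1=s_0$. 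One caution: your restriction to $n\ge2$ is not merely convenient but necessary --- for $n=0$ and $n=1$ the paper's own corollaries exhibit Ap\'ery sets (with $t_1=b+1$, resp.\ containing $bs_2$) that violate conditions 2--3 of $R_{b'}(n)$, so those cases are not ``finite computations'' confirming the same formula but genuine exceptions to the statement as written.

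The genuine gap is that the containment --- the entire content of the theorem --- is only gestured at, and exactly at the points where the $+1$ case fails to mirror the $-1$ case. First, your exclusion of $bs_{n+1}$ and your control of which residues can carry large Ap\'ery elements rested, in the Thabit case, on Lemma~\ref{Lem_x-1} ($x-(b-1)\in T_b(n)$) and Lemma~\ref{Lem_maxAp_1}; here you only have the ascent $x+(b-1)\in T_{b'}(n)$, which yields $w(i+(b-1))\le w(i)+(b-1)$ but does not by itself locate $\max Ap$ or rule out $bs_{n+1}$ --- the ``residue count'' you invoke is not the one in Corollary~\ref{cor_n1_k2} and is never supplied. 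Second, the boundary identity for $s_i+bs_{n+1}$ (the analogue of part (2) of Lemma~\ref{Lem_push_coefficients_1}) must exhibit at least one summand $s_0$ on the right-hand side so that subtracting $s_0$ stays in the semigroup; you assert it exists ``by expanding'' but never write it, and it is the engine of the reduction into $A_{b'}(n)$ (your analogue of Lemma~\ref{Lem_A}). Third, condition 3 of $R_{b'}(n)$ requires an argument for every vector of $A_{b'}(n)\setminus R_{b'}(n)$ --- $t_n=b$ with $t_{n+1}=b-1$; $(t_n,t_{n+1})=(b-1,b-1)$ with some $t_i>0$ for $2\le i\le n-1$; $(t_n,t_{n+1})=(b-1,b-1)$ with $t_1\ge3$ --- and in each case one must actually produce $\sum t_is_i-s_0\in T_{b'}(n)$ or a rewriting into $R_{b'}(n)$; the single congruence you verify handles only the vector $(3,0,\dots,0,b-1,b-1)$, and the rest is deferred with ``dispatched by applying the carry identities.'' So what you have is a plausible road map whose checkable numerics are right, but the decisive lemmas --- precisely the ones with no counterpart in \cite{Rosales2015} --- are missing.
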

\begin{corollary}
In a similar way, we obtain the explicit form of the Apery set of Thabit numerical semigroups of the second kind base $b$ for $n = 0$ and $n = 1$ and we obtain the genus of Thabit numerical semigroups of the second kind base $b$ for $n = 0$ and $n = 1$.
\begin{enumerate}
\item If $n = 0$ and $b \not\equiv 1\pmod{3}$, $Ap(T_{b'}(0),s_0) = \{t_1 s_1 | t_1 \in \{0,1,\cdots,b+1\}\} = \{b+2, b^2 + b + 1, \cdots, b^3 + 2b^2 + 2b + 1\}$ since $s_1 \equiv -(b-1) \pmod{b+2}$ and $\{0,-(b-1),\cdots,-(b+1)(b-1)\}$ is a complete system of residues modulo $b+2$. Hence, we obtain 
\begin{align*}
\sum_{(t_1) \in R_{b'}(0) } t_1 s_1 & = \sum_{k=1}^{b+1} ks_1 \\
& = (b+2)\{\frac{1}{2}(b+1)(b^2 + b + 1)\}
\end{align*}
and $g(T_{b'}(0)) = \frac{1}{2}(b+1)(b^2 + b + 1) - \frac{b+1}{2} = \frac{b^3 + 2b^2 + b}{2}$
\item If $n = 1$ and $b \not\equiv 1\pmod{3}$, $Ap(T_{b'}(1),s_0) = \{t_1 s_1 + t_2 s_2 | t_1 \in \{0,1,\cdots,b\}, t_2 \in \{0,1,\cdots,b-1\}\} \bigcup \{bs_2\} = \{0,s_1,\cdots, bs_1, s_2, s_1+s_2, \cdots, bs_1 + s_2, \cdots, (b-1)s_2, s_1 + (b-1)s_2, \cdots, bs_1 + (b-1)s_2, bs_2\}$ since $s_1 \equiv -(b-1) \pmod{b^2 + b + 1}$ and $\{0,-(b-1),\cdots,-(b^2+b)(b-1)\}$ is a complete system of residues modulo $b^2 + b + 1$. Hence, we obtain 
\begin{align*}
\sum_{(t_1,t_2) \in R_{b'}(1) } (t_1 s_1 + t_2 s_2) & = b\sum_{k=1}^{b} ks_1 + (b+1)\sum_{k=1}^{b-1} ks_2 + bs_2 \\
& = \frac{1}{2} (b^2 + b + 1)(b^5 + b^4 + b^3 + b)
\end{align*}
and $g(T_{b'}(1)) = \frac{1}{2}(b^5 + b^4 + b^3 + b) - \frac{b^2 + b}{2} = \frac{b^5 + b^4 + b^3 - b^2}{2}$
\end{enumerate}
\end{corollary}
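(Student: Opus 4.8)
\emph{Strategy.} I would treat $n=0$ and $n=1$ separately, because $T_{b'}(0)$ is minimally generated by two elements and $T_{b'}(1)$ by three; in each case the argument runs parallel to the one for $T_b(n)$ in Sections~2--3, and for $n=1$ one may alternatively specialise Theorem~\ref{thm_Ap_explicit}. Throughout write $s_i=(b+1)b^{\,n+i}+1$, and recall Lemma~\ref{lem_F_g}(2), which computes the genus from the Ap\'ery set.

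\emph{The case $n=0$.} By Theorem~\ref{Thm_minimal_2}, $T_{b'}(0)=\langle s_0,s_1\rangle$ with $s_0=b+2$ and $s_1=b^2+b+1$. Since $s_1=(b-1)s_0+3$ we get $\gcd(s_0,s_1)=\gcd(b+2,3)$, which is $1$ precisely because $b\not\equiv1\pmod3$ --- the only use of the congruence hypothesis here. For a numerical semigroup minimally generated by two coprime integers $a<c$ the classical description of the Ap\'ery set gives $Ap(\langle a,c\rangle,a)=\{0,c,2c,\dots,(a-1)c\}$, because the $a$ multiples $kc$ ($0\le k\le a-1$) lie in distinct residue classes modulo $a$, each $kc-a\notin\langle a,c\rangle$, and $\#Ap(\langle a,c\rangle,a)=a$. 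With $a=s_0=b+2$ this yields $Ap(T_{b'}(0),s_0)=\{t_1s_1:0\le t_1\le b+1\}$; since $s_1\equiv-(b-1)\pmod{b+2}$ and $\gcd(b-1,b+2)=\gcd(b-1,3)=1$, the residues $-(b-1)k$ for $0\le k\le b+1$ form a complete system mod $b+2$, which is the set displayed in the statement. Then $\sum_{w\in Ap(T_{b'}(0),s_0)}w=\sum_{k=1}^{b+1}ks_1=\tfrac{(b+1)(b+2)}{2}s_1$, so by Lemma~\ref{lem_F_g}(2), $g(T_{b'}(0))=\tfrac{1}{b+2}\cdot\tfrac{(b+1)(b+2)}{2}s_1-\tfrac{b+1}{2}=\tfrac{(b+1)(s_1-1)}{2}=\tfrac{b^3+2b^2+b}{2}$.

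\emph{The case $n=1$.} Now $T_{b'}(1)=\langle s_0,s_1,s_2\rangle$ with $s_0=b^2+b+1$, $s_1=b^3+b^2+1$, $s_2=b^4+b^3+1$, minimally by Theorem~\ref{Thm_minimal_2}. First I would record the identities $bs_0=s_1+(b-1)$ and $bs_1=s_2+(b-1)$, which give modulo $s_0$ the congruences $s_1\equiv-(b-1)$, $s_2\equiv-(b^2-1)\equiv(b+1)s_1$, and $bs_2\equiv b-1$. Put $P=\{t_1s_1+t_2s_2:0\le t_1\le b,\ 0\le t_2\le b-1\}\cup\{bs_2\}$. The counting step is $\#P=(b+1)b+1=b^2+b+1=s_0$: modulo $s_0$ one has $t_1s_1+t_2s_2\equiv-(b-1)(t_1+(b+1)t_2)$, and as $(t_1,t_2)$ ranges over $\{0,\dots,b\}\times\{0,\dots,b-1\}$ the integer $t_1+(b+1)t_2$ ranges bijectively over $\{0,1,\dots,b^2+b-1\}$, while $bs_2$ supplies the value $b^2+b$; since $\gcd(b-1,s_0)=\gcd(b-1,3)=1$ (again using $b\not\equiv1\pmod3$), the map $m\mapsto-(b-1)m$ permutes $\mathbb{Z}/s_0\mathbb{Z}$, so the elements of $P$ are pairwise incongruent and represent every class modulo $s_0$. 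As $P\subseteq T_{b'}(1)$ and $\#Ap(T_{b'}(1),s_0)=s_0$, it then remains only to prove that each $p\in P$ is the least element of $T_{b'}(1)$ in its residue class, i.e. $p-s_0\notin T_{b'}(1)$, which yields $P=Ap(T_{b'}(1),s_0)$.

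I expect this minimality step to be the main obstacle, and I would handle it as in Section~3: writing an arbitrary element of $T_{b'}(1)$ as $a_0s_0+a_1s_1+a_2s_2$ with $a_j\in\mathbb{N}$, assume $p-s_0=a_0s_0+a_1s_1+a_2s_2$; reducing modulo $s_0$ pins down $a_1+(b+1)a_2$ modulo $s_0$, and then the near-geometric growth $s_0<s_1<s_2$ (each roughly $b$ times the previous) forces the coefficients dictated by that congruence to overshoot $p-s_0$, a contradiction --- the mechanism behind Lemma~\ref{Lem_A}, Lemma~\ref{Lem_x-1} and Corollary~\ref{cor_n1_k2}. This bookkeeping must be carried through the boundary cases $t_1=b$, $t_2=b-1$ and the extra element $bs_2$ separately, much as in the proof of Lemma~\ref{Lem_Ap_k<n}. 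Once $P=Ap(T_{b'}(1),s_0)$ is in hand, the genus is a direct computation from Lemma~\ref{lem_F_g}(2): $\sum_{w\in P}w=b\sum_{k=1}^{b}ks_1+(b+1)\sum_{k=1}^{b-1}ks_2+bs_2=\tfrac12(b^2+b+1)(b^5+b^4+b^3+b)$, whence $g(T_{b'}(1))=\tfrac12(b^5+b^4+b^3+b)-\tfrac{b^2+b}{2}=\tfrac{b^5+b^4+b^3-b^2}{2}$.
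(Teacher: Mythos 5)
Your proposal is sound, and it is essentially the argument the paper has in mind: Section 4 states this corollary without proof (the only justification the paper offers is the complete-residue-system remark embedded in the statement itself, "in a similar way" to Section 3), and your write-up fills in exactly that outline. The $n=0$ case is complete and correct as you give it: $T_{b'}(0)=\langle b+2,\,b^2+b+1\rangle$, coprimality reduces to $\gcd(b+2,3)=1$, i.e.\ $b\not\equiv 1\pmod 3$, the two-generator Ap\'ery set is $\{0,s_1,\dots,(b+1)s_1\}$, and the genus computation matches (your version even corrects the paper's slip of listing $b+2$ instead of $0$ as the first element). For $n=1$ your congruence bookkeeping ($s_1\equiv-(b-1)$, $s_2\equiv-(b^2-1)$, $bs_2\equiv b-1\pmod{s_0}$, and the bijection $(t_1,t_2)\mapsto t_1+(b+1)t_2$ onto $\{0,\dots,b^2+b-1\}$) is correct and shows your candidate set $P$ is a complete residue system inside the semigroup of the right cardinality $s_0$.

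The one piece you leave unexecuted is the verification that each $p\in P$ is minimal in its class, i.e.\ $p-s_0\notin T_{b'}(1)$; this is genuinely needed (a complete residue system inside $S$ only gives $\sum_{w\in Ap}w\le\sum_{p\in P}p$, so the genus formula would otherwise be only an upper bound), but your sketched mechanism does close it with little extra work. Writing $p-s_0=a_0s_0+a_1s_1+a_2s_2$ and reducing mod $s_0$ gives $a_1+(b+1)a_2\equiv t_1+(b+1)t_2\pmod{s_0}$ (using $\gcd(b-1,s_0)=1$); since $t_1+(b+1)t_2\le b^2+b<s_0$, either the two sums are equal, in which case the inequality $s_2<(b+1)s_1$ shows the base-$(b+1)$ pair $(t_1,t_2)$ already minimizes $a_1s_1+a_2s_2$ among representations with that weighted sum, so $p-s_0\ge p$, a contradiction; or $a_1+(b+1)a_2\ge t_1+(b+1)t_2+s_0$, which forces $a_1s_1+a_2s_2$ to exceed $p$ outright. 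The element $bs_2$ (weighted sum $b^2+b$) is handled by the same dichotomy. So the proposal is correct; it simply stops short of this routine but necessary case check, which the paper omits altogether.
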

\begin{corollary}
We obtain the maximal element in the Apery set of Thabit numerical semigroup of the second kind base $b$ and the Frobenius number of this semigroup is obtained immediately as follows:
\begin{enumerate}
\item If $n = 0$, $Ap(T_{b'}(0),s_0) = \{t_1 s_1 | t_1 \in \{0,1,\cdots,b+1\}\} = \{b+2, b^2 + b + 1, \cdots, b^3 + 2b^2 + 2b + 1\}$ implies that $max(Ap(T_{b'}(0),s_0)) = (b+1)s_1 = b^3 + 2b^2 + 2b + 1$ and $F(T_{b'}(0)) = (b+1)s_1 - s_0 = b^3 + 2b^2 + b - 1$. 
\item If $n = 1$, $Ap(T_{b'}(1),s_0) = \{t_1 s_1 + t_2 s_2 | t_1 \in \{0,1,\cdots,b\}, t_2 \in \{0,1,\cdots,b-1\}\} \bigcup \{bs_2\}$ implies that $max(Ap(T_{b'}(1),s_0)) = bs_1 + (b-1)s_2 = b^5 + b^4 + 2b - 1$ and $F(T_{b'}(1)) = bs_1 + (b-1)s_2 - s_0 = b^5 + b^4 - b^2 + b - 2$.
\item If $n \geq 2$, $Ap(T_{b'}(n),s_0) = (\{\sum_{i=1}^{n+1} t_i s_i | (t_1,\cdots,t_{n+1}) \in R_{b'}(n)\}$ implies that $max(Ap(T_{b'}(n),s_0) = 2s_1 + (b-1)s_n + (b-1)s_{n+1}$ and $F(T_{b'}(n)) = 2s_1 + (b-1)s_n + (b-1)s_{n+1} - s_0 = b^{2n+3} + b^{2n+2} - b^{2n+1} - b^{2n} + 2b^{n+2} + 2b^{n+1} + 2b^2$. 
\end{enumerate} 
\end{corollary}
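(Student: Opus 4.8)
The proof of all three cases will rest on the identity $F(T_{b'}(n)) = \max\bigl(Ap(T_{b'}(n),s_0)\bigr) - s_0$ from Lemma~\ref{lem_F_g}(1): in each case it is enough to exhibit the largest element of the Apéry set and subtract $s_0 = (b+1)b^{n}+1$, after which the displayed polynomial in $b$ and $n$ drops out by expanding $s_0,s_1,s_n,s_{n+1}$, where throughout $s_i = (b+1)b^{n+i}+1$.

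For $n=0$ the preceding corollary gives $Ap(T_{b'}(0),s_0) = \{t_1 s_1 : 0\le t_1\le b+1\}$, a chain of nonnegative multiples of the positive integer $s_1$, so its maximum is $(b+1)s_1$. For $n=1$ that corollary gives $Ap(T_{b'}(1),s_0) = \{t_1 s_1 + t_2 s_2 : 0\le t_1\le b,\ 0\le t_2\le b-1\}\cup\{b s_2\}$; over the two‑parameter family the maximum is clearly $b s_1 + (b-1)s_2$, and since $s_2 = b s_1 - (b-1)$ we get $b s_1 + (b-1)s_2 - b s_2 = b-1 > 0$, so the lone extra element $b s_2$ is not maximal. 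Hence the maxima are as claimed, and subtracting $s_0$ produces the stated Frobenius numbers.

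For $n\ge 2$ the real work is to maximize $\phi(t) = \sum_{i=1}^{n+1} t_i s_i$ over $t=(t_1,\dots,t_{n+1})\in R_{b'}(n)$, using the description $Ap(T_{b'}(n),s_0) = \{\phi(t) : t\in R_{b'}(n)\}$ of Theorem~\ref{thm_Ap_explicit}. The plan is to write $\phi(t) = (b+1)b^{n}\,\sigma(t) + \tau(t)$ with $\sigma(t) = \sum_{i=1}^{n+1} t_i b^{i}$ and $\tau(t) = \sum_{i=1}^{n+1} t_i$, and observe that every $t\in R_{b'}(n)$ has at most one coordinate equal to $b$ (two would violate condition~(1)), so $\tau(t)\le (b-1)n + b$, which is strictly less than $(b+1)b^{n}$ for all $n\ge 2$; hence $\phi$ is strictly increasing in $\sigma$, and it suffices to maximize $\sigma$ over $R_{b'}(n)$. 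Then I would split on $(t_n,t_{n+1})$ as in the definition of $R_{b'}(n)$. If $t_{n+1}=t_n=b-1$, condition~(3) collapses $t$ to $(t_1,0,\dots,0,b-1,b-1)$ with $t_1\le 2$, so $\sigma(t) = t_1 b + b^{n+2} - b^{n} \le 2b + b^{n+2} - b^{n}$, with equality only for $t^{*} = (2,0,\dots,0,b-1,b-1)$. If $t_{n+1}=b-1$ and $t_n\le b-2$, condition~(1) forces $\sigma(t) \le (b-1)b^{n} + (b-1)b^{n+1} = b^{n+2} - b^{n}$. If $t_{n+1}\le b-2$, likewise $\sigma(t) \le b^{n+1} + (b-2)b^{n+1} = b^{n+2} - b^{n+1}$. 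Both bounds are strictly below $2b + b^{n+2} - b^{n}$, so $t^{*}$ is the unique maximizer of $\sigma$, hence of $\phi$; one checks directly that $t^{*}\in R_{b'}(n)$ (including the boundary case $b=2$, where $t_1=2=b$). Therefore $\max(Ap(T_{b'}(n),s_0)) = 2 s_1 + (b-1)s_n + (b-1)s_{n+1}$, and Lemma~\ref{lem_F_g}(1) yields the Frobenius number after expansion.

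The main obstacle is the case analysis of the last paragraph: one must confirm that the extra latitude in condition~(1) — promoting one coordinate to $b$ at the cost of zeroing all earlier ones — can never compensate for the deficit forced at the two top positions $s_n$ and $s_{n+1}$, and this requires estimating the geometric sums $\sum b^{i}$ carefully in each branch and checking that $t^{*}$ is genuinely admissible. A secondary, purely bookkeeping point is to pin down a bound on $\tau(t)$ uniform in $n\ge 2$ so that the reduction from maximizing $\phi$ to maximizing $\sigma$ is rigorous.
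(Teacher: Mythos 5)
Your argument is sound, and it actually supplies something the paper does not: Section~4 states this corollary (like all results of that section) without proof, so there is no proof in the paper to compare against. Your reduction is the natural one and it checks out: writing $\sum_{i=1}^{n+1} t_i s_i = (b+1)b^{n}\sigma(t)+\tau(t)$ with $\sigma(t)=\sum_{i=1}^{n+1}t_i b^{i}$, $\tau(t)=\sum_{i=1}^{n+1}t_i$, the bound $\tau(t)\le n(b-1)+b<(b+1)b^{n}$ (at most one coordinate can equal $b$ by condition~(1)) legitimately reduces the problem to maximizing $\sigma$ over $R_{b'}(n)$, and your three branches give the correct caps: $2b+b^{n+2}-b^{n}$ when $(t_n,t_{n+1})=(b-1,b-1)$, attained only at $(2,0,\dots,0,b-1,b-1)$, which is admissible (also for $b=2$); $b^{n+2}-b^{n}$ when $t_{n+1}=b-1$, $t_n\le b-2$; and $b^{n+2}-b^{n+1}$ when $t_{n+1}\le b-2$; both of the latter are strictly smaller. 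The $n=0$ and $n=1$ cases are handled correctly as well, including the comparison $bs_1+(b-1)s_2-bs_2=b-1>0$.

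The one genuine defect is your closing claim that "expansion produces the stated" formula in the $n\ge 2$ case: it does not. Expanding $2s_1+(b-1)s_n+(b-1)s_{n+1}-s_0$ with $s_i=(b+1)b^{n+i}+1$ gives $b^{2n+3}+b^{2n+2}-b^{2n+1}-b^{2n}+2b^{n+2}+b^{n+1}-b^{n}+2b-1$, not the displayed $b^{2n+3}+b^{2n+2}-b^{2n+1}-b^{2n}+2b^{n+2}+2b^{n+1}+2b^{2}$. For instance, with $b=2$, $n=2$ one has $T_{2'}(2)=\langle 13,25,49,97\rangle$, the maximal Ap\'ery element is $2s_1+s_2+s_3=196$, so $F=183$, whereas the printed polynomial evaluates to $200$ (and $196$ is easily confirmed against Theorem~\ref{thm_Ap_explicit}, e.g.\ via the genus formula, which does match). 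So item~3 of the corollary as printed contains an arithmetic slip; what your argument actually proves is $F(T_{b'}(n))=2s_1+(b-1)s_n+(b-1)s_{n+1}-s_0$ together with the corrected expansion above, and you should carry out that expansion and flag the discrepancy rather than assert agreement with the printed closed form.
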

Finally, we obtain the genus of Thabit numerical semigroups of the second kind base $b$ for $n \geq 2$.
\begin{theorem}
Let $n,b \in \mathbb{N}$ and $n,b \geq 2$. Then
\begin{gather*}
g(T_{b'}(n)) = 3b + \frac{b^{2n}(b^3 + b^2 - b - 1) + b^n\{b^2(n+1) - (n+3)\})}{2}.
\end{gather*}
\end{theorem}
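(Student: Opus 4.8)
The plan is to evaluate the sum of the Ap\'{e}ry set and feed it into the genus formula. Write $s_0:=(b+1)b^n+1$ for the least positive element of $T_{b'}(n)$ and, as in Section~4, $s_i:=(b+1)b^{n+i}+1$; by Lemma~\ref{lem_F_g}(2),
\[
g(T_{b'}(n)) \;=\; \frac{1}{s_0}\sum_{w\in Ap(T_{b'}(n),s_0)}w \;-\;\frac{s_0-1}{2},
\]
and by Theorem~\ref{thm_Ap_explicit} the sum on the right equals $\Sigma:=\sum_{(t_1,\dots,t_{n+1})\in R_{b'}(n)}\big(t_1s_1+\cdots+t_{n+1}s_{n+1}\big)$. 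So everything reduces to understanding $R_{b'}(n)$ combinatorially and pushing the coefficients against the $s_i$.

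The key structural step is to prove that, for $n\ge2$,
\[
R_{b'}(n)\;=\;R_b(n)\ \sqcup\ \big\{\,(1,0,\dots,0,b-1,b-1),\ (2,0,\dots,0,b-1,b-1)\,\big\},
\]
where $R_b(n)$ is the set from Section~3; that is, $R_{b'}(n)$ is $R_b(n)$ together with exactly the two sequences having $t_2=\cdots=t_{n-1}=0$, $t_n=t_{n+1}=b-1$ and $t_1\in\{1,2\}$. These two are excluded from $R_b(n)$ (whose clause for $t_n=t_{n+1}=b-1$ forces $t_1=\cdots=t_{n-1}=0$) but admitted by clause~(3) of $R_{b'}(n)$, which relaxes that to $t_1\le2$. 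I would prove this by a line-by-line comparison of the defining clauses; as a sanity check it yields $\#R_{b'}(n)=\#R_b(n)+2=\big((b+1)b^n-1\big)+2=s_0$, which is forced since $\#Ap(T_{b'}(n),s_0)=s_0$. This is precisely where $n\ge2$ enters: for $n\in\{0,1\}$ the set $R_{b'}$ has a genuinely different shape, and those cases were already disposed of in the preceding corollaries.

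Next I would use that the generators of $T_{b'}(n)$ are a uniform shift of those of $T_b(n)$: writing $\widetilde s_i:=(b+1)b^{n+i}-1$ for the $i$-th generator of $T_b(n)$, we have $s_i=\widetilde s_i+2$, so $\sum_i t_is_i=\sum_i t_i\widetilde s_i+2\sum_i t_i$ on every sequence. Combined with the decomposition above,
\[
\Sigma\;=\;\sum_{(t)\in R_b(n)}\sum_i t_i\widetilde s_i\;+\;2\sum_{(t)\in R_b(n)}\sum_i t_i\;+\;\big(s_1+(b-1)s_n+(b-1)s_{n+1}\big)\;+\;\big(2s_1+(b-1)s_n+(b-1)s_{n+1}\big),
\]
the last two terms being the contributions of the two adjoined sequences. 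The first sum is exactly the quantity evaluated inside the proof of the genus formula for $T_b(n)$, namely $\tfrac12\big((b+1)b^n-1\big)\big[(b^3+b^2-b-1)b^{2n}+\{b-1+(n-1)(b^2-1)\}b^n-2b+2\big]$. The second sum $\sum_{(t)\in R_b(n)}\sum_i t_i$ I would compute by summing the counting Lemmas~\ref{lem_Lem27}--\ref{lem_Lem28_5} weighted by $k$ and $i$, a geometric-series exercise yielding $\tfrac12 b^n\big(b^2(n+1)+b-n-2\big)-(b-1)$. The two adjoined terms collapse, using $s_1=b^{n+2}+b^{n+1}+1$, $s_n=b^{2n+1}+b^{2n}+1$, $s_{n+1}=b^{2n+2}+b^{2n+1}+1$, to $3s_1+2(b-1)(s_n+s_{n+1})=2(b^3+b^2-b-1)b^{2n}+3b^{n+2}+3b^{n+1}+4b-1$.

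Finally I would add the three pieces, divide by $s_0=(b+1)b^n+1$ (the total is divisible by $s_0$, as it must be since $g(T_{b'}(n))$ is an integer), subtract $\tfrac{s_0-1}{2}$, and regroup powers of $b$ to obtain $3b+\tfrac12\big[b^{2n}(b^3+b^2-b-1)+b^n\{b^2(n+1)-(n+3)\}\big]$. The obstacle is clerical rather than conceptual: keeping the long polynomial identity error-free, and — the one genuinely delicate point — identifying correctly the two sequences by which $R_{b'}(n)$ differs from $R_b(n)$ and watching the small-$b$ corner (for $b=2$ the second adjoined sequence has a coordinate equal to $b$, which must be treated consistently when appealing to the $R_b(n)$ counting lemmas). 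Alternatively one could bypass the shift trick and redo the Section~3 analysis from scratch with $R_{b'}(n)$ throughout, re-deriving the analogues of Lemmas~\ref{lem_Lem27}--\ref{lem_Lem28_5}. I would sanity-check the final closed form against a direct enumeration of the Ap\'{e}ry set for $n=2$, $b\in\{2,3\}$.
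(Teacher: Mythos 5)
Your proposal cannot be matched against the paper's argument for the simple reason that the paper gives none: as announced in the introduction, all of Section 4 (including this genus formula) is stated without proof. Judged on its own merits, your route is correct and is the natural way to supply the missing proof by recycling Section 3. I checked the key points. The set-theoretic decomposition is right: comparing the defining clauses shows $R_b(n)\subseteq R_{b'}(n)$ and that the only relaxation is $t_1\le 2$ (instead of $t_1=0$) when $(t_n,t_{n+1})=(b-1,b-1)$, so for $n\ge 2$ exactly the two tuples $(1,0,\dots,0,b-1,b-1)$ and $(2,0,\dots,0,b-1,b-1)$ are adjoined, giving $\#R_{b'}(n)=(b+1)b^n+1=s_0$ as required; your caveat about $b=2$ (where the second tuple has $t_1=b$) is handled by clause (1) being vacuous at $i=1$. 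Your weighted count $\sum_{(t)\in R_b(n)}\sum_i t_i=\tfrac12 b^n\bigl(b^2(n+1)+b-n-2\bigr)-(b-1)$ agrees with what Lemmas \ref{lem_Lem27}--\ref{lem_Lem28_5} give, the adjoined contribution $3s_1+2(b-1)(s_n+s_{n+1})=2(b^3+b^2-b-1)b^{2n}+3b^{n+2}+3b^{n+1}+4b-1$ is correct, and adding these to the Section 3 total $\tfrac12\bigl((b+1)b^n-1\bigr)\bigl[(b^3+b^2-b-1)b^{2n}+\{b-1+(n-1)(b^2-1)\}b^n-2b+2\bigr]$, dividing by $s_0$ and subtracting $\tfrac{s_0-1}{2}$ does reproduce the stated closed form (I also cross-checked numerically at $n=2$, $b=2$: both the direct Ap\'ery-set sum and your three-piece decomposition give $1274$, hence $g=92$, matching the theorem). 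Two small remarks: your argument inherits the paper's unproven Theorem \ref{thm_Ap_explicit} (that is unavoidable here, and legitimate), and the step replacing the sum over $Ap(T_{b'}(n),s_0)$ by the sum over tuples in $R_{b'}(n)$ silently uses that $t\mapsto\sum_i t_is_i$ is injective on $R_{b'}(n)$; this is automatic because the image is the Ap\'ery set, which has exactly $s_0=\#R_{b'}(n)$ elements, but it is worth saying explicitly.
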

\section{The Results of the semigroups $\big< \{b^{n+i} + 1 | i \in \mathbb{N}\}\big>$ for $2 | b$}
\begin{theorem}\label{Thm_minimal_3}
If $n,b \in \mathbb{N}$, $2 | b$ and $n \neq 0$, then $\big<\{b^{n+i} + 1| i \in \{0,1,\cdots,n\}\}\big>$ is a minimal system of generators.
\end{theorem}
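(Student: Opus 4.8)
The plan is to follow the proof of Theorem~\ref{Thm_minimal} almost verbatim. Abbreviate $c_i=b^{n+i}+1$ for $i\in\{0,1,\dots,n\}$, so that $c_0<c_1<\cdots<c_n$. I would first record the generating-set identity $SC^{+}(b,n)=\langle c_0,\dots,c_n\rangle$ — one of the results announced in the present section — and note that the hypotheses enter exactly here: $2\mid b$ is what forces $\gcd(c_0,c_1)=1$, so that $\langle c_0,\dots,c_n\rangle$ is a numerical semigroup at all, while $n\neq 0$ is what makes the index range $\{0,\dots,n\}$ the correct one. Granting this, the theorem reduces to showing that $\{c_0,\dots,c_n\}$ has no superfluous member; and since $c_0<c_1<\cdots<c_n$, a generator exceeding $c_i$ can never occur in a representation of $c_i$ as a sum of generators, so it suffices to prove that $c_i\notin\langle c_0,\dots,c_{i-1}\rangle$ for every $i\in\{1,\dots,n\}$ (the case $i=0$ being trivial, as $c_0>0$).

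So fix $i\in\{1,\dots,n\}$ and suppose, for a contradiction, that $c_i=\sum_{j=0}^{i-1}a_jc_j$ with $a_0,\dots,a_{i-1}\in\mathbb{N}$. The key point is the congruence $c_j=b^{n+j}+1\equiv 1\pmod{b^n}$, valid for all $j\ge 0$: reducing the relation $c_i=\sum_{j=0}^{i-1}a_jc_j$ modulo $b^n$ yields $\sum_{j=0}^{i-1}a_j\equiv 1\pmod{b^n}$, hence $\sum_{j=0}^{i-1}a_j=1+tb^n$ for some $t\in\mathbb{N}$. If $t=0$ then $\sum_{j=0}^{i-1}a_j=1$, which forces exactly one $a_j$ to equal $1$ and the rest to vanish; this gives $c_i=c_j$ with $j<i$, contradicting strict monotonicity. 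If $t\ge 1$ then $\sum_{j=0}^{i-1}a_j\ge 1+b^n$, whence
\[
c_i=\sum_{j=0}^{i-1}a_jc_j\ \ge\ \Big(\sum_{j=0}^{i-1}a_j\Big)c_0\ \ge\ (1+b^n)(b^n+1)\ =\ b^{2n}+2b^n+1\ >\ b^{2n}+1\ \ge\ b^{n+i}+1\ =\ c_i,
\]
where the last inequality uses $i\le n$. This contradiction finishes the proof, so $\{c_0,\dots,c_n\}$ is the minimal system of generators of $SC^{+}(b,n)$.

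I do not expect the redundancy step to cause any trouble: it is a routine congruence-and-size estimate, uniform in $i$, and in fact marginally cleaner than the analogous computation for $T_b(n)$, because here the generators are $\equiv 1$ (rather than $\equiv -1$) modulo the relevant power of $b$. The one place where real work is needed — if one wants to prove, rather than quote, the preliminary identity $SC^{+}(b,n)=\langle c_0,\dots,c_n\rangle$ — is establishing that $b^{n+i}+1\in\langle c_0,\dots,c_n\rangle$ for every $i>n$. The naive analogue of Lemma~\ref{Lem_2s} (``$bt+(b-1)\in S$ for all $t\in S\setminus\{0\}$'') is \emph{false} for $SC^{+}(b,n)$, so that route is closed; instead one must produce an explicit expansion of $b^{2n+1}+1$ in terms of $c_0,\dots,c_n$ and then induct upward. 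Managing the $+1$ in that expansion, as opposed to the $-1$ of the Thabit case, is the only genuinely new ingredient, and this is the step I would expect to demand the most care.
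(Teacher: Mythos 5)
Your non-redundancy argument is correct, and since the paper states Theorem~\ref{Thm_minimal_3} without proof (Section~5 is announced as proofless), the only benchmark is the proof of Theorem~\ref{Thm_minimal}; your computation is exactly that argument transplanted, and in fact more complete, since you verify $c_i\notin\langle c_0,\dots,c_{i-1}\rangle$ for every $i\in\{1,\dots,n\}$ rather than only for the top generator. The congruence $c_j\equiv 1\pmod{b^n}$, the dichotomy $t=0$ versus $t\ge 1$, and the estimate $(1+b^n)(b^n+1)>b^{n+i}+1$ for $i\le n$ all check out, and the reduction to representations using only smaller generators is sound.

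The genuine gap is the other half of the claim. You propose to ``record'' the identity $SC^{+}(b,n)=\langle c_0,\dots,c_n\rangle$ as an announced result, but no such separate statement exists in the paper: it is carried only by this theorem itself (and it is indispensable for the corollary $e(SC^{+}(b,n))=n+1$), so quoting it is circular, and your proof is incomplete precisely at the step you flag at the end. Moreover that step is not a formality. The right recursion is $b(b^{m}+1)-(b-1)=b^{m+1}+1$, so the analogue of Lemma~\ref{Lem_2a_2m_eq} with $bt-(b-1)$ in place of $bt+(b-1)$ does reduce everything to exhibiting $b^{2n+1}+1\in\langle c_0,\dots,c_n\rangle$; but the naive analogue of the Thabit expansion, i.e.\ a representation with coefficient sum $1+b^n$, is impossible for even $b\ge 4$: writing $b^{2n+1}+1=\sum_{j=0}^{n}a_jc_j$ with $\sum a_j=1+tb^n$ gives $\sum a_j(b^j-1)=b^{n+1}-t-1-tb^n\equiv-2t\pmod{b-1}$, while the left side is divisible by $b-1$, so $(b-1)\mid 2t$ and hence (as $b$ is even) $(b-1)\mid t$. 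One admissible choice is $t=b-1$: take $a_1=\frac{b^n-b}{b-1}$, $a_0=(b-1)b^n+1-a_1$, all other $a_j=0$; then $a_0(b^n+1)+a_1(b^{n+1}+1)=(b^{n+1}-b+1)b^n+(b-1)b^n+1=b^{2n+1}+1$. With that identity and the sign-adjusted Lemma~\ref{Lem_2a_2m_eq}, your outline closes; without it, the theorem as used in the paper (and the ensuing embedding-dimension corollary) is only half proved.
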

\begin{corollary}
Let $n,b \in \mathbb{N}$, $2 | b$ and let $SC^{+}(b,n)$ be a Cunningham numerical semigroup associated with $n$ and $b$. Then $e(SC^{+}(b,n)) = n + 1$.
\end{corollary}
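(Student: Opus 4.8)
The plan is to verify the two assertions packaged in the statement: that $G:=\{\,b^{n+i}+1\mid i\in\{0,1,\dots,n\}\,\}$ is a system of generators of $SC^{+}(b,n)$, and that no proper subset of $G$ generates it. Write $c_i=b^{n+i}+1$, so that $G=\{c_0,\dots,c_n\}$ and $SC^{+}(b,n)=\langle c_0,c_1,\dots\rangle$. The workhorse will be the elementary identity $bc_i-(b-1)=c_{i+1}$, valid for every $i$ (the Cunningham counterpart of $bs_i+(b-1)=s_{i+1}$ in \cite{Rosales2015}); note also that the hypothesis $2\mid b$ is exactly what makes $\gcd(c_0,c_1)=1$, hence $SC^{+}(b,n)$ a numerical semigroup, by a $\gcd$ computation of the same type as the one carried out for $T_b(n)$.

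For the generating direction I would first record the $-(b-1)$ analogue of Lemma~\ref{Lem_2a_2m_eq}: if $M=\langle A\rangle$ and $ba-(b-1)\in M$ for every $a\in A$, then $bm-(b-1)\in M$ for every $m\in M\setminus\{0\}$; the proof is the same as for Lemma~\ref{Lem_2a_2m_eq}, writing $m=a_1+\dots+a_k$ with $a_\ell\in A$ and noting that $bm-(b-1)=(ba_1-(b-1))+ba_2+\dots+ba_k$ is a sum of elements of $M$. Applying this with $A=G$ and $M=\langle G\rangle$, the required hypothesis $c_{i+1}=bc_i-(b-1)\in\langle G\rangle$ is automatic for $0\le i\le n-1$, so everything reduces to the single claim $c_{n+1}\in\langle G\rangle$, which I would obtain from the explicit identity
\[
c_{n+1}=(b-1)(b^{n}-1)\,c_0+b\,c_{n-1},
\]
checked by expanding both sides; here $n\ne 0$ is used so that $c_{n-1}\in G$, while $b\ge 2$ guarantees that the coefficients $(b-1)(b^{n}-1)$ and $b$ are nonnegative. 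Granting this, $\langle G\rangle$ is closed under $m\mapsto bm-(b-1)$, and since $c_{j+1}=bc_j-(b-1)$, an induction started from $c_{n+1}\in\langle G\rangle$ gives $c_j\in\langle G\rangle$ for all $j$, so $\langle G\rangle=SC^{+}(b,n)$.

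For minimality I would mimic the proof of Theorem~\ref{Thm_minimal}. Suppose $c_k\in\langle c_0,\dots,c_{k-1}\rangle$ for some $k\in\{1,\dots,n\}$, say $c_k=\sum_{j=0}^{k-1}a_jc_j$. Since every exponent $n+j$ is at least $n$, reducing this equality modulo $b^{n}$ forces $\sum_{j=0}^{k-1}a_j\equiv 1\pmod{b^{n}}$, so $\sum a_j\in\{1,1+b^{n},1+2b^{n},\dots\}$. If $\sum a_j=1$ then $c_k=c_j$ for some $j<k$, impossible because $(c_i)$ is strictly increasing; if $\sum a_j\ge 1+b^{n}$ then $c_k=\sum a_jc_j\ge c_0(1+b^{n})=(b^{n}+1)^2>b^{2n}+1\ge c_k$, again impossible. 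Hence no $c_k$ with $1\le k\le n$ lies in the monoid generated by the earlier $c_i$'s; since any redundant element of $G$ would have to be expressible through strictly smaller elements of $G$ (and $c_0$, being the smallest nonzero element, is not redundant), $G$ admits no proper generating subset. By the uniqueness of minimal generating systems recalled in the introduction, $G$ is the minimal system of generators of $SC^{+}(b,n)$, and the corollary $e(SC^{+}(b,n))=n+1$ follows at once.

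The one genuinely non-routine step is locating the identity $c_{n+1}=(b-1)(b^{n}-1)c_0+b\,c_{n-1}$: direct algebraic manipulation of $c_{n+1}$ keeps collapsing to the tautology $c_{n+1}=bc_n-(b-1)$, and the usable decomposition is found by pairing that relation with the observation $c_n+(b-1)=b\,c_{n-1}$. Once the displayed identity is in hand, the rest of the argument — and, presumably, the analogous statements quoted without proof for $T_{b'}(n)$ and $SF(b,n)$ in the later sections — runs in parallel with \cite{Rosales2015}.
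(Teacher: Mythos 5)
Your proposal is correct, and there is nothing in the paper to compare it against line by line: Section 5 states Theorem \ref{Thm_minimal_3} and this corollary explicitly ``without the proofs.'' What you have written is a faithful adaptation of the paper's Section 2 template to the Cunningham case: the sign-flipped analogue of Lemma \ref{Lem_2a_2m_eq} (closure of $\langle A\rangle$ under $m\mapsto bm-(b-1)$), reduction of the generating claim to the single membership $c_{n+1}\in\langle c_0,\dots,c_n\rangle$, and a congruence-plus-size estimate for minimality. Your key identity checks out: $(b-1)(b^{n}-1)(b^{n}+1)+b(b^{2n-1}+1)=(b-1)(b^{2n}-1)+b^{2n}+b=b^{2n+1}+1=c_{n+1}$, with nonnegative coefficients, and the minimality step is sound --- indeed it is more complete than the paper's own proof of Theorem \ref{Thm_minimal} for $T_b(n)$, since you rule out redundancy of every generator $c_k$ (reducing modulo $b^{n}$, then excluding $\sum a_j=1$ and $\sum a_j\geq 1+b^{n}$), together with the observation that a redundant generator could only be written in terms of strictly smaller ones. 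One caveat worth flagging: your argument, like Theorem \ref{Thm_minimal_3} itself, requires $n\neq 0$ (you need $c_{n-1}\in G$), whereas the corollary as printed omits that hypothesis; for $n=0$ the claim is false, since $SC^{+}(b,0)=\langle 2,\,b+1\rangle$ has embedding dimension $2$, not $1$. So the statement should be read with $n\geq 1$, under which your proof establishes it completely.
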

We define $RC_{b}(n)$ for $b,n \geq 2$ and $2 | b$, as follows:
\begin{definition}
Let $b,n \geq 2$ and $2 | b$. Then $RC_{b}(n) = \{(t_1,t_2,\cdots,t_{n}) | t_i \in \{0,1,\cdots,b\}\}$ is defined by
\begin{enumerate}
\item If $t_i = b$, $t_j = 0$ for all $1 \leq j < i$.
\item $t_{n} \leq b-1$.
\item If $t_{n} = b-1$, then $t_1 \leq 1$ and all $t_i = 0$ for $i \neq 1,n$.
\end{enumerate}
\end{definition}
Then we obtain the following theorem:
\begin{theorem} 
Let  $b,n \geq 2$ and $2 | b$. Then we obtain
\begin{gather*}
Ap(SC^{+}(b,n),s_0) = \{\sum_{i=1}^{n} t_i s_i | (t_1,\cdots,t_{n}) \in RC_{b}(n)\}.
\end{gather*}
\end{theorem}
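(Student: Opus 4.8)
The argument should mirror the treatment of $T_b(n)$ in Sections~2 and~3, with $SC^{+}(b,n)$ in the role of $T_b(n)$. Write $s_i=b^{n+i}+1$; by Theorem~\ref{Thm_minimal_3} the set $\{s_0,\dots,s_n\}$ is the minimal system of generators, so every element of $SC^{+}(b,n)$ is a nonnegative combination of $s_0,\dots,s_n$, and I will use that $\#Ap(SC^{+}(b,n),s_0)=s_0=b^n+1$. Since $b^n\equiv-1\pmod{s_0}$ one has $s_i\equiv 1-b^i\pmod{s_0}$, in particular $s_n\equiv 2$; and since $bs_{i-1}-s_i=b-1$, the analogue of Lemma~\ref{Lem_x-1} runs the other way, $x\in SC^{+}(b,n)$ with $x\not\equiv0\pmod{s_0}$ forcing $x+(b-1)\in SC^{+}(b,n)$ (replace a summand $a_is_i$ with $a_i\ge1$, $i\ge1$, by $(a_i-1)s_i+bs_{i-1}$). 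The plan is: (1) prove the push identities; (2) show every Ap\'ery element has coefficient vector in $RC_b(n)$; (3) count $RC_b(n)$; (4) compare cardinalities. For step~(1), as in Lemma~\ref{Lem_push_coefficients_1} a one-line computation gives $s_i+bs_j=bs_{i-1}+s_{j+1}$ for $0<i\le j<n$ (both sides equal $b^{n+i}+b^{n+j+1}+b+1$; the case $i=j=1$ reads $(b+1)s_1=bs_0+s_2$). For the top generator I would derive, exactly as in the proofs of Lemma~\ref{Lem_2s} and Lemma~\ref{Lem_rev_minimal_1}, an explicit identity expressing $bs_n$ (equivalently $s_{n+1}=b^{2n+1}+1\in\langle s_0,\dots,s_n\rangle$) as a nonnegative combination of $s_0,\dots,s_n$ with positive $s_0$-coefficient; in particular $bs_n-s_0\in SC^{+}(b,n)$. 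I would also record the identity
\[
2s_1+(b-1)s_n=\bigl[(b-1)(b^n-1)+2b\bigr]s_0
\]
(both sides equal $(b-1)b^{2n}+2b^{n+1}+b+1$) together with its analogues $s_i+(b-1)s_n=c_is_0+(\text{terms in }s_1,\dots,s_{i-1})$ with $c_i\ge1$ for $2\le i\le n-1$, obtained from $s_i+(b-1)s_n=bs_{i-1}+b^{2n}(b-1)$ by unwinding the decomposition of $b^{2n}(b-1)$.

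For step~(2), fix $w\in Ap(SC^{+}(b,n),s_0)$. Since $w-s_0\notin SC^{+}(b,n)$, every representation $w=\sum_{j=0}^na_js_j$ has $a_0=0$; as $w$ has only finitely many representations, pick one minimising $\Phi:=\sum_{i\ge1}i\,a_i$. If $a_j\ge b+1$ for some $1\le j\le n-1$, rewriting $(b+1)s_j=bs_{j-1}+s_{j+1}$ either strictly lowers $\Phi$ (when $j\ge2$) or forces $a_0\ge b\ge1$ (when $j=1$): both impossible, so $a_j\le b$. If $a_n\ge b$, then $w-s_0=(bs_n-s_0)+(a_n-b)s_n+\sum_{j<n}a_js_j\in SC^{+}(b,n)$, contradicting $w\in Ap$, so $a_n\le b-1$. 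If $a_j=b$ for some $1\le j\le n-1$ while $a_i\ge1$ for some $i<j$, rewriting $s_i+bs_j=bs_{i-1}+s_{j+1}$ lowers $\Phi$ (when $i\ge2$) or introduces $s_0$ (when $i=1$): again impossible. This is the analogue of Lemma~\ref{Lem_A}: $(a_1,\dots,a_n)$ satisfies conditions~1 and~2 of $RC_b(n)$. Finally, if $a_n=b-1$ and either $a_1\ge2$ or $a_i\ge1$ for some $2\le i\le n-1$, the identities of step~(1) rewrite $w$ with positive $s_0$-coefficient, once more contradicting $a_0=0$ in all representations of $w$. Hence $(a_1,\dots,a_n)\in RC_b(n)$, so $Ap(SC^{+}(b,n),s_0)\subseteq\{\sum_{i=1}^nt_is_i:(t_1,\dots,t_n)\in RC_b(n)\}$.

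For steps~(3)--(4): if $t_n\le b-2$ ($b-1$ choices), $(t_1,\dots,t_{n-1})$ ranges over the vectors in $\{0,\dots,b\}^{n-1}$ in which an entry equal to $b$ forces all earlier entries to vanish, of which there are $b^{\,n-1}+\frac{b^{\,n-1}-1}{b-1}$, contributing $(b-1)b^{\,n-1}+b^{\,n-1}-1=b^{\,n}-1$; and if $t_n=b-1$, condition~3 leaves exactly $(0,\dots,0,b-1)$ and $(1,0,\dots,0,b-1)$. So $\#RC_b(n)=(b^{\,n}-1)+2=b^{\,n}+1=s_0$. Therefore $\{\sum_{i=1}^nt_is_i:(t_1,\dots,t_n)\in RC_b(n)\}$ has at most $s_0$ elements and contains the $s_0$-element set $Ap(SC^{+}(b,n),s_0)$, whence the two coincide (and $(t_1,\dots,t_n)\mapsto\sum t_is_i$ restricts to a bijection $RC_b(n)\to Ap(SC^{+}(b,n),s_0)$), which is the assertion.

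The easy parts are steps~(3)--(4) and the first push identity. The main obstacle will be step~(1)'s explicit identities: the representation of $bs_n$ (equivalently of $s_{n+1}$) over $\{s_0,\dots,s_n\}$, and above all the family $s_i+(b-1)s_n=c_is_0+(\text{lower terms})$ with $c_i\ge1$ used in the refinement of step~(2). These are genuinely base-dependent, and producing them uniformly in $b\ge2$ and $n\ge2$ — plausibly by downward induction on $i$, carefully tracking how $b^{2n}(b-1)$ decomposes over the generators — is where the real work lies; the remainder is bookkeeping parallel to Lemmas~\ref{Lem_A} and~\ref{Lem_Ap_k<n}.
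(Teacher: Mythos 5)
The paper itself states this theorem without proof (the Section~5 results are announced ``without the proofs''), so the only benchmark is the Section~3 template, and your plan follows it faithfully: show $Ap(SC^{+}(b,n),s_0)\subseteq\{\sum_{i=1}^n t_is_i : (t_1,\dots,t_n)\in RC_b(n)\}$, count $\#RC_b(n)$, and conclude by comparing with $\#Ap(SC^{+}(b,n),s_0)=s_0$, exactly as in Lemma~\ref{Lem_A} and Lemma~\ref{Lem_Ap_k<n}. Your count $\#RC_b(n)=(b^n-1)+2=b^n+1=s_0$ is correct, the push identity $s_i+bs_j=bs_{i-1}+s_{j+1}$ and the identity $2s_1+(b-1)s_n=[(b-1)(b^n-1)+2b]s_0$ both check out, and the $\Phi$-minimal-representation bookkeeping (including the use of ``$a_0=0$ in every representation of an Ap\'ery element'') is sound.

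The gap is the one you flag yourself: the memberships $bs_n-s_0\in SC^{+}(b,n)$ and $s_i+(b-1)s_n-s_0\in SC^{+}(b,n)$ for $2\le i\le n-1$ are asserted, not proved, and without them neither the bound $a_n\le b-1$ nor condition~3 of $RC_b(n)$ is established, so the containment step is incomplete as written. Fortunately the missing identities exist uniformly and are short: writing $D=\frac{b^i-1}{b-1}=1+b+\cdots+b^{i-1}$, one checks (expand, using $s_1-s_0=(b-1)b^n$ and $s_0=b^n+1$) that for all $2\le i\le n$
\[
s_i+(b-1)s_n-s_0=\Bigl[(b-1)(b^n+1)+2-D\Bigr]s_0+\bigl[D-2\bigr]s_1,
\]
and both coefficients are nonnegative integers precisely because $i\ge2$ (so $D\ge 1+b\ge3$) and $D\le\frac{b^n-1}{b-1}<(b-1)(b^n+1)$. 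The case $i=n$ is exactly the statement $bs_n-s_0=s_n+(b-1)s_n-s_0\in SC^{+}(b,n)$ that you need for $a_n\le b-1$, and the cases $2\le i\le n-1$ give condition~3; for example, with $b=2$, $n=2$ this reads $2s_2-s_0=4s_0+s_1$ ($29=20+9$). The identity fails at $i=1$ (the $s_1$-coefficient would be $-1$), which is exactly why $RC_b(n)$ must allow $t_1=1$ when $t_n=b-1$, and why your separate relation $2s_1+(b-1)s_n=[(b-1)(b^n-1)+2b]s_0$ is the right substitute there. With this one display inserted, your argument is complete and is, as far as one can tell, the intended (omitted) proof of the paper.
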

\begin{corollary}
In a similar way, we obtain the explicit form of the Apery set of Cunningham numerical semigroups for $n = 0$ and $n = 1$.
\begin{enumerate}
\item If $n = 0$ and $2 | b$, $Ap(SC^{+}(b,0),s_0) = \{t_1 s_1 | t_1 \in \{0,1\}\} = \{2, b+1\}$ since $s_1 \equiv 1 \pmod{2}$ and $\{0,1\}$ is a complete system of residues modulo $2$. Hence, we obtain 
\begin{gather*}
\sum_{(t_1) \in RC_{b}(0) } t_1 s_1 = s_1 = b + 1
\end{gather*}
and $g(SC^{+}(b,0)) = \frac{1}{2}(b+1) - \frac{1}{2} = \frac{b}{2}$.
\item If $n = 1$ and $2 | b$, $Ap(SC^{+}(b,1),s_0) = \{t_1 s_1 | t_1 \in \{0,1,\cdots,b\} = \{0,s_1,\cdots, bs_1\} = \{0, b^2 + 1, \cdots, b^3 + b\}$ since $s_1 \equiv -(b-1) \pmod{b+1}$ and $\{0,-(b-1),\cdots,-b(b-1)\}$ is a complete system of residues modulo $b+1$. Hence, we obtain 
\begin{align*}
\sum_{(t_1) \in RC_{b}(1) } t_1 s_1 & = \sum_{k=1}^{b} ks_1 \\
& = (b+1)\frac{1}{2}b(b^2 + 1)
\end{align*}
and $g(C^{+}(b,1)) = \frac{1}{2}b(b^2 + 1) - \frac{b}{2} = \frac{b^3}{2}$.
\end{enumerate}
\end{corollary}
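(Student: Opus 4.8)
The plan is to treat $n=0$ and $n=1$ as base cases solved by hand, using the same three tools that drove the analysis of $T_b(n)$: first pin down the minimal system of generators, then read the Ap\'{e}ry set off the least-residue description of \cite{Rosales2009} (each $w(i)$ is the smallest element of the semigroup lying in the class $i$ modulo $s_0$), and finally plug the Ap\'{e}ry set into Lemma \ref{lem_F_g}(2) to get the genus. These two cases genuinely must be handled separately, because Theorem \ref{Thm_minimal_3} is stated only for $n\neq 0$ and the set $RC_b(n)$ defined above degenerates when $n=1$.

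For $n=0$ I would first reduce the generating set. Writing $s_i=b^{n+i}+1$, we have $s_0=2$ and $s_1=b+1$, and since $b$ is even every generator $b^{k}+1$ with $k\ge 1$ is odd and at least $b+1$, hence already belongs to $\langle 2,b+1\rangle$; therefore $SC^{+}(b,0)=\langle 2,b+1\rangle$, which is numerical because $\gcd(2,b+1)=1$. Then $Ap(SC^{+}(b,0),2)$ has exactly two elements, namely $0$ and the least odd element of the semigroup, and that element is $b+1=s_1$; this set is precisely $\{t_1 s_1 \mid (t_1)\in RC_b(0)\}$. Summing over the Ap\'{e}ry set gives $b+1$, so Lemma \ref{lem_F_g}(2) yields $g(SC^{+}(b,0))=\tfrac12(b+1)-\tfrac12=\tfrac b2$.

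For $n=1$, Theorem \ref{Thm_minimal_3} already supplies the minimal system $\{s_0,s_1\}=\{b+1,b^2+1\}$. The arithmetic core is the congruence $s_1=b^2+1\equiv-(b-1)\pmod{s_0}$ (from $b\equiv-1$), together with $\gcd(b-1,b+1)=1$ and $\gcd(b^2+1,b+1)=1$, both of which hold precisely because $b$ is even; these force $0,s_1,2s_1,\dots,bs_1$ to realise all $b+1$ residues modulo $s_0$. Next I would verify that $js_1$ is the least element of $SC^{+}(b,1)$ in its class: writing an arbitrary element as $a_0 s_0+a_1 s_1$, its class modulo $s_0$ equals that of $a_1 s_1$, and $\gcd(s_0,s_1)=1$ gives $a_1\equiv j\pmod{s_0}$, hence $a_1\ge j$ and the element is at least $js_1$. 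Thus $Ap(SC^{+}(b,1),s_0)=\{0,s_1,\dots,bs_1\}=\{t_1 s_1 \mid (t_1)\in RC_b(1)\}$, and since $\sum_{k=0}^{b}ks_1=\tfrac{b(b+1)}{2}s_1=(b+1)\cdot\tfrac12\,b(b^2+1)$, Lemma \ref{lem_F_g}(2) gives $g(SC^{+}(b,1))=\tfrac12\,b(b^2+1)-\tfrac b2=\tfrac{b^3}{2}$. The only step that is not mere bookkeeping is this ``least in its residue class'' claim, and it is exactly there that the hypothesis $2\mid b$ is essential: for odd $b$ the two gcd's above equal $2$, the list $0,s_1,\dots,bs_1$ repeats residues, and the clean description of the Ap\'{e}ry set breaks down.
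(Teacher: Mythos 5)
Your proposal is correct and follows essentially the route the paper sketches inside the corollary itself: identify the (minimal) generating set, use the complete-residue-system argument modulo $s_0$ to read off the Ap\'{e}ry set as least elements in each class, and apply Lemma \ref{lem_F_g}(2) for the genus; you merely fill in the steps the paper leaves unproved (the ``least in its residue class'' verification and the reduction $SC^{+}(b,0)=\langle 2,b+1\rangle$, which Theorem \ref{Thm_minimal_3} excludes by requiring $n\neq 0$). Note also that your $\{0,b+1\}$ is the correct reading of the paper's $\{2,b+1\}$, which is a typo, since an Ap\'{e}ry set always contains $0$ and never $s_0$.
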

\begin{corollary}
We obtain the maximal element in the Apery set of Cunningham numerical semigroups and the Frobenius number of this semigroup is obtained immediately as follows:
\begin{enumerate}
\item If $n = 0$, $max(Ap(SC^{+}(b,0),s_0)) = b+1$ and $F(SC^{+}(b,0)) = (b+1) - s_0 = b - 1$. 
\item If $n = 1$, $max(Ap(SC^{+}(b,1),s_0)) = b^3 + b$ and $F(SC^{+}(b,1)) = bs_1 - s_0 = b^3 - 1$.
\item If $n \geq 2$, $Ap(SC^{+}(b,n),s_0) = (\{\sum_{i=1}^{n} t_i s_i | (t_1,\cdots,t_{n}) \in RC_{b}(n)\}$ implies that 
\begin{align*}
max(Ap(SC^{+}(b,n),s_0) & = s_1 + (b-1)s_{n} \text{ and } \\
F(SC^{+}(b,n)) & = s_1 + (b-1)s_n - s_0 = b^{n+1} + 1 + (b-1)(b^{2n} + 1) - (b^n + 1) \\
& = (b-1)(b^{2n} + b^n + 1).
\end{align*} 
\end{enumerate} 
\end{corollary}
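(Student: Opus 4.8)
The plan is to find $\max(Ap(SC^{+}(b,n),s_0))$ in each of the three cases and then apply Lemma \ref{lem_F_g}(1), namely $F(S)=\max(Ap(S,s_0))-s_0$.

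For $n=0$ and $n=1$ the relevant Apéry sets are already displayed in the preceding corollary. When $n=0$ we have $s_0=2$ and $Ap(SC^{+}(b,0),s_0)=\{0,b+1\}$ (the least even element of $SC^{+}(b,0)$ is $0$, the least odd one is $b+1$), so $\max(Ap)=b+1$ and $F(SC^{+}(b,0))=b+1-2=b-1$. When $n=1$ the minimal generating set is $\{b+1,b^{2}+1\}$, so the embedding dimension is two, $\gcd(b+1,b^{2}+1)=1$ because $b$ is even, and therefore $Ap(SC^{+}(b,1),s_0)=\{0,s_1,2s_1,\dots,bs_1\}$; hence $\max(Ap)=bs_1=b^{3}+b$ and $F(SC^{+}(b,1))=bs_1-s_0=b^{3}-1$. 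This settles parts (1) and (2).

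For $n\geq 2$ I would use the theorem immediately preceding this corollary, which gives $Ap(SC^{+}(b,n),s_0)=\{\,\sum_{i=1}^{n}t_i s_i : (t_1,\dots,t_n)\in RC_b(n)\,\}$, and then maximise the functional $\sum_{i=1}^{n}t_i s_i$ over $RC_b(n)$; since $s_1<s_2<\dots<s_n$, this is a finite combinatorial optimisation. First I would note that condition (1) of $RC_b(n)$ permits at most one coordinate to equal $b$. Then I would split on condition (3). If $t_n=b-1$, the admissible tuples are exactly $(t_1,0,\dots,0,b-1)$ with $t_1\in\{0,1\}$, whose maximal value is $V_A:=s_1+(b-1)s_n$. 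If $t_n\leq b-2$, then either no coordinate equals $b$, the best such tuple being $(b-1,\dots,b-1,b-2)$, or exactly one coordinate $t_i$ with $i\leq n-1$ equals $b$; in the latter situation $i=1$ is optimal because $b s_i-s_{i+1}=b-1>0$, and the resulting tuple also beats the "no $b$" tuple by $s_1>0$. Hence the maximum in this branch equals $V_B:=b s_1+(b-1)(s_2+\dots+s_{n-1})+(b-2)s_n$.

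It remains to compare $V_A$ and $V_B$. Substituting $s_i=b^{n+i}+1$ and using $(b-1)\sum_{i=2}^{n-1}b^{n+i}=b^{2n}-b^{n+2}$, one obtains $V_A-V_B=b^{n+1}-b+2-(b-1)(n-2)$, which is positive for all $b\geq 2$ and $n\geq 2$ since $b^{n+1}$ dominates the linear-in-$n$ term. Consequently $\max(Ap(SC^{+}(b,n),s_0))=V_A=s_1+(b-1)s_n$, and Lemma \ref{lem_F_g}(1) gives $F(SC^{+}(b,n))=s_1+(b-1)s_n-s_0=(b^{n+1}+1)+(b-1)(b^{2n}+1)-(b^{n}+1)=(b-1)(b^{2n}+b^{n}+1)$. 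The main thing requiring care is the bookkeeping in the case analysis over $RC_b(n)$ — ensuring that no admissible tuple is missed and that $V_A>V_B$ holds also for the small cases $n=2,3$; everything else is routine arithmetic.
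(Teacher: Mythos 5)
Your argument is correct, but note that the paper itself offers no proof here: Section 5 states these results ``without the proofs,'' so the only comparison available is with the technique the paper actually carries out for $T_b(n)$ in Section~3. There the maximum of the Ap\'ery set is located by a residue argument (Lemma \ref{Lem_x-1} and Lemma \ref{Lem_maxAp_1}: the maximum is the element $w$ in a specific residue class modulo $s_0$, identified via $s_i \bmod s_0$), whereas you take the preceding theorem $Ap(SC^{+}(b,n),s_0)=\{\sum_{i=1}^n t_i s_i : (t_1,\dots,t_n)\in RC_b(n)\}$ at face value and run a direct combinatorial maximisation over $RC_b(n)$. Your case split is exhaustive (condition (3) of $RC_b(n)$ forces the $t_n=b-1$ branch to consist only of $(t_1,0,\dots,0,b-1)$ with $t_1\le 1$; in the $t_n\le b-2$ branch at most one coordinate can equal $b$, and the exchange inequality $bs_i-s_{i+1}=b-1>0$ correctly pushes the $b$ to position $1$), and your comparison $V_A-V_B=b^{n+1}+1-(b-1)(n-1)>0$ checks out for all $b\ge 2$, $n\ge 2$, including $n=2$ where the middle sum is empty. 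The $n=0,1$ cases via the two-generator structure ($\gcd(b+1,b^2+1)=1$ for even $b$, so $Ap=\{0,s_1,\dots,bs_1\}$, i.e.\ Sylvester's formula) are also fine, and in fact your $Ap(SC^{+}(b,0),s_0)=\{0,b+1\}$ quietly corrects the paper's $\{2,b+1\}$, which cannot be an Ap\'ery set since $2-s_0=0\in SC^{+}(b,0)$. What your route buys is a self-contained, elementary verification once the Ap\'ery-set theorem is granted; what the paper's Section-3 style buys is independence from a full enumeration of the coefficient set, since maximality follows from knowing which residue class modulo $s_0$ the maximum must lie in.
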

Finally, we obtain the genus of Cunningham numerical semigroups for $n \geq 2$.
\begin{theorem}
Let $n,b \in \mathbb{N}$ and $n,b \geq 2$ where $2 | b$. Then
\begin{gather*}
g(SC^{+}(b,n)) = b + \frac{b^{2n}(b-1) + b^n (bn - n -1)}{2}.
\end{gather*}
\end{theorem}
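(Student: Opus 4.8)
The plan is to apply part~(2) of Lemma~\ref{lem_F_g} with $x = s_0 = b^{n} + 1$, which gives
\[
g(SC^{+}(b,n)) = \frac{1}{s_0}\Big(\sum_{w \in Ap(SC^{+}(b,n),s_0)} w\Big) - \frac{s_0 - 1}{2},
\]
and to evaluate the Ap\'{e}ry sum through the explicit parametrization of $Ap(SC^{+}(b,n),s_0)$ by $RC_{b}(n)$ established above, where $s_i = b^{n+i}+1$. Since $\#Ap(SC^{+}(b,n),s_0) = s_0$ and $\#RC_{b}(n) = s_0$ (the latter being the $b^{n}-1$ tuples with $t_{n}\le b-2$ together with the $2$ tuples with $t_{n}=b-1$), this parametrization is a bijection, so
\[
\sum_{w \in Ap(SC^{+}(b,n),s_0)} w = \sum_{(t_1,\dots,t_n)\in RC_{b}(n)} \sum_{i=1}^{n} t_i s_i = \sum_{i=1}^{n} s_i C_i, \qquad C_i := \sum_{(t_1,\dots,t_n)\in RC_{b}(n)} t_i .
\]
Thus the problem reduces to computing the column sums $C_i$ and then simplifying $\sum_{i=1}^{n} s_i C_i$.

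To compute the $C_i$ I would first record two auxiliary facts, obtained by the same casework as in Lemmas~\ref{lem_Lem27}--\ref{lem_Lem28_5}: among the tuples $(u_1,\dots,u_m)\in\{0,1,\dots,b\}^{m}$ satisfying ``$u_j=b\Rightarrow u_\ell=0$ for $\ell<j$'', their number is $b^{m}+\frac{b^{m}-1}{b-1}$ and the $i$-th column sum equals $\frac{b^{m+1}+b^{m-i+1}}{2}$ (split off the configurations with $u_k=b$ for $k=i$, for $k<i$, and for $k>i$, and sum a geometric series). Next I would split $RC_{b}(n)$ according to the value of $t_{n}$: if $t_{n}\le b-2$ the prefix $(t_1,\dots,t_{n-1})$ ranges over all admissible tuples of length $n-1$, while if $t_{n}=b-1$ condition~(3) of the definition of $RC_b(n)$ forces $t_1\in\{0,1\}$ and $t_2=\dots=t_{n-1}=0$. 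Feeding the two auxiliary facts into this split yields
\[
C_{n}=\frac{(b-2)b^{n}+3b-2}{2},\qquad C_{1}=\frac{(b^{2}-1)b^{n-1}+2}{2},\qquad C_{i}=\frac{(b-1)(b^{n}+b^{n-i})}{2}\ \ (2\le i\le n-1).
\]

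Finally, substituting $s_i=b^{n+i}+1$ and the three expressions for $C_i$ into $\sum_{i=1}^{n}s_iC_i$, I would expand, collect the resulting geometric-type sums (of the shapes $\sum b^{2n+i}$, $\sum b^{2n}$, $\sum b^{n+i}$, $\sum b^{i}$) together with the two boundary terms $C_{1}s_{1}$ and $C_{n}s_{n}$, and show the total equals $(b^{n}+1)\big(b+\frac{(b-1)b^{n}(b^{n}+n)}{2}\big)$. Dividing by $s_0=b^{n}+1$ and subtracting $\frac{s_0-1}{2}=\frac{b^{n}}{2}$ then gives
\[
g(SC^{+}(b,n)) = b + \frac{(b-1)b^{n}(b^{n}+n)-b^{n}}{2} = b + \frac{b^{2n}(b-1)+b^{n}(bn-n-1)}{2},
\]
which is the assertion. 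I expect the main obstacle to be precisely this last algebraic step: extracting the common factor $b^{n}+1$ from the unwieldy polynomial produced by the three-part column-sum formula, while keeping straight the boundary contributions $i=1$ and $i=n$ (which deviate from the generic $2\le i\le n-1$ pattern) and the degenerate behaviour of the index ranges when $n=2$. By contrast, the counting arguments for the $C_i$ are routine adaptations of the $T_b(n)$ computations, and the initial reduction is immediate once the bijectivity of the $RC_b(n)$-parametrization is noted.
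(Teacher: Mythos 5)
Your proposal is correct, but note that the paper itself offers no proof to compare against: the genus formula for $SC^{+}(b,n)$ is stated in Section 5 ``without the proofs''. Your argument is the natural adaptation of the paper's Section 3 computation of $g(T_b(n))$: both apply part (2) of Lemma \ref{lem_F_g} with $x=s_0$ and evaluate the Ap\'ery sum coordinatewise over the parametrizing set; where the paper tabulates, for each value $k$, the counts $\#\{(t_1,\dots)\mid t_i=k\}$ (Lemmas \ref{lem_Lem27}--\ref{lem_Lem28_5}), you package the same information into column sums $C_i$ via the auxiliary count for ``condition (1)'' tuples, which is a mild streamlining rather than a different route. I checked the key steps: $\#RC_b(n)=(b^n-1)+2=s_0$, so the map onto $Ap(SC^{+}(b,n),s_0)$ is indeed a bijection (given the paper's unproved Ap\'ery-set theorem, which you are entitled to assume here); the auxiliary column-sum formula $\frac{b^{m+1}+b^{m-i+1}}{2}$ is right; the three values $C_1=\frac{(b^2-1)b^{n-1}+2}{2}$, $C_n=\frac{(b-2)b^n+3b-2}{2}$, $C_i=\frac{(b-1)(b^n+b^{n-i})}{2}$ ($2\le i\le n-1$) are correct; and expanding $\sum_i s_iC_i$ does give $\frac{(b-1)b^{3n}+(n+1)(b-1)b^{2n}+n(b-1)b^n+2b^{n+1}+2b}{2}=(b^n+1)\bigl(b+\frac{(b-1)b^n(b^n+n)}{2}\bigr)$, from which the stated formula follows after subtracting $\frac{b^n}{2}$ (e.g.\ for $b=n=2$ this yields $g=12$, matching a direct computation for $\langle 5,9,17\rangle$). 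The only point worth writing out explicitly in a final version is the empty-range convention for $2\le i\le n-1$ when $n=2$, which you already flag and which causes no trouble.
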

\section{The Results of the semigroups $\big< \{b^{b^{n+i}} + 1 | i \in \mathbb{N}\}\big>$ for $2 | b$}
\begin{theorem} \label{thm_fer_main}
Let $n,b \in \mathbb{N}$ and $2 | b$, then we have $SF(b,n) = \big< b^{b^{n+i}} + 1 | i \in \mathbb{N}\big> = \big< b^{b^{n}} + 1, b^{b^{n+1}} + 1\big>$.
\end{theorem}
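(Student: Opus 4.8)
The plan is to show that every generator $b^{b^{n+i}}+1$ with $i\ge 2$ already belongs to $\langle b^{b^{n}}+1,\ b^{b^{n+1}}+1\rangle$; since the inclusion $\langle b^{b^{n}}+1, b^{b^{n+1}}+1\rangle\subseteq SF(b,n)$ is immediate, this yields the asserted equality of semigroups. Write $p=b^{b^{n}}+1$ and $q=b^{b^{n+1}}+1$. First I would record that $\gcd(p,q)=1$: since $b$ is even, $b^{b^{n}}$ and $b^{b^{n+1}}$ are even, so $p$ and $q$ are odd; working modulo $p$ we have $b^{b^{n}}\equiv-1$, and because $b^{n+1}=b\cdot b^{n}$ with $b$ even, $b^{b^{n+1}}=(b^{b^{n}})^{b}\equiv(-1)^{b}=1\pmod p$, hence $q\equiv 2\pmod p$, so $\gcd(p,q)\mid\gcd(p,2)=1$. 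In particular $\langle p,q\rangle$ is a numerical semigroup, and by the classical two-generator formula \cite{Sylvester1883} its Frobenius number is $F(\langle p,q\rangle)=pq-p-q$, so every integer strictly larger than $pq-p-q$ lies in $\langle p,q\rangle$.

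Next I would reduce everything to a single inequality. Expanding the product,
\[
pq-p-q=(b^{b^{n}}+1)(b^{b^{n+1}}+1)-(b^{b^{n}}+1)-(b^{b^{n+1}}+1)=b^{\,b^{n}+b^{n+1}}-1 .
\]
The sequence $\big(b^{b^{n+i}}+1\big)_{i\ge0}$ is strictly increasing (because $b^{n+i}$ is, as $b\ge2$), so it suffices to prove $b^{b^{n+2}}+1>pq-p-q$, i.e. $b^{b^{n+2}}+1>b^{\,b^{n}+b^{n+1}}-1$. This follows by comparing exponents: $b^{n+2}=b^{n}\cdot b^{2}$ while $b^{n}+b^{n+1}=b^{n}(b+1)$, and $b^{2}>b+1$ for every $b\ge2$ (indeed $b^{2}-b-1=b(b-1)-1\ge1$), so $b^{n+2}>b^{n}+b^{n+1}$ and therefore $b^{b^{n+2}}+1>b^{\,b^{n}+b^{n+1}}+1>b^{\,b^{n}+b^{n+1}}-1$. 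Consequently $b^{b^{n+i}}+1\ge b^{b^{n+2}}+1>F(\langle p,q\rangle)$ for all $i\ge2$, so $b^{b^{n+i}}+1\in\langle p,q\rangle$ for all $i\ge2$, which finishes the argument.

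There is no genuinely hard step here: the only points requiring care are the coprimality $\gcd(p,q)=1$ (needed so that the two-generator Frobenius formula applies) and the elementary exponent estimate $b^{2}>b+1$, where the hypothesis that $b$ is even is comfortably more than enough. If one wanted in addition an explicit representation, I would note that the same computation modulo $q$ (symmetric to the one modulo $p$ above) gives $b^{b^{n+i}}+1\equiv2\pmod{q}$ for $i\ge2$, hence $b^{b^{n+i}}+1\equiv2\pmod{pq}$ and $b^{b^{n+i}}+1=2+k_i\,pq$ with $k_i\ge1$; but for the stated identity the size estimate already suffices.
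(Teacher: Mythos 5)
Your argument is correct. Note that the paper itself states this theorem without proof (the Section 6 results are announced in the introduction as being given ``without the proofs''), so there is no authorial argument to compare against; your proof fills that gap. The route you take is sound: the congruence $b^{b^{n+1}}=(b^{b^{n}})^{b}\equiv(-1)^{b}=1\pmod{p}$ (using $2\mid b$) gives $q\equiv 2\pmod p$ and hence $\gcd(p,q)=1$ since $p$ is odd, which both shows $\langle p,q\rangle$ is a numerical semigroup and licenses Sylvester's formula $F(\langle p,q\rangle)=pq-p-q=b^{\,b^{n}+b^{n+1}}-1$; the exponent comparison $b^{n+2}=b^{n}b^{2}>b^{n}(b+1)=b^{n}+b^{n+1}$ (valid for all $b\ge2$, which is the relevant range since $b$ is an even generator base) then puts every generator with $i\ge2$ strictly above the Frobenius number, hence inside $\langle p,q\rangle$, and the reverse inclusion is trivial. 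This is arguably the cleanest possible argument: it trades the explicit Ap\'ery-set/representation machinery used elsewhere in the paper (where the analogous reductions of generators are done by exhibiting identities among the $s_i$) for a pure size estimate, at the cost of not producing an explicit expression of $b^{b^{n+i}}+1$ in terms of $p$ and $q$ — which, as you note, is not needed for the stated identity. Your closing congruence remark ($\equiv 2\pmod{pq}$) is a correct observation but, as you say, only a congruence, not itself a representation, so it is right that you do not lean on it.
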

\begin{corollary}
Let $s_i = b^{b^{n+i}} + 1$. Then theorem \ref{thm_fer_main} implies that $\text{Ap}(SF(b,n), s_0) = \{0,s_1,2s_1,\cdots,(b^{b^{n}})s_1\}$. Hence $F(SF(b,n)) = b^{b^{n}} (b^{b^{n+1}} + 1) - (b^{b^{n}} + 1) = b^{(b+1)b^{n}} - 1$.  Also, $g(SF(b,n)) = \frac{(b^{b^{n}})(b^{b^{n}} + 1) \cdot (b^{b^{n+1}} + 1)}{2(b^{b^{n}} + 1)} - \frac{b^{b^{n}}}{2} = \frac{1}{2}b^{b^{n}} (b^{b^{n}} - 2) = \frac{1}{2} b^{b^{n}} b^{b^{n+1}} = \frac{1}{2} b^{(b+1)b^{n}}$.
\end{corollary}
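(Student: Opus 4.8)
All three assertions are consequences of Theorem~\ref{thm_fer_main}, so the substance lies in that theorem; once it is in hand, the rest is the two-generator theory applied verbatim. Write $m=b^{b^{n}}$, so that $s_{i}=b^{b^{n+i}}+1=m^{b^{i}}+1$; in particular $s_{0}=m+1$, $s_{1}=m^{b}+1$, and $s_{0}<s_{1}$. My plan for Theorem~\ref{thm_fer_main} has three steps: (i) show $\gcd(s_{0},s_{1})=1$ -- since $b$ is even, $m$ is even and $m+1$ is odd, while reducing modulo $m+1$ gives $s_{1}=m^{b}+1\equiv(-1)^{b}+1=2$, hence $\gcd(s_{0},s_{1})=\gcd(m+1,2)=1$ and $\langle s_{0},s_{1}\rangle$ is a numerical semigroup; (ii) compute, via the classical Sylvester formula $F(\langle a,c\rangle)=ac-a-c$ for coprime $a,c$, that $F(\langle s_{0},s_{1}\rangle)=(m+1)(m^{b}+1)-(m+1)-(m^{b}+1)=m^{b+1}-1$; (iii) observe that $b\geq 2$ forces $b^{i}\geq b^{2}>b+1$ for all $i\geq 2$, hence $s_{i}=m^{b^{i}}+1>m^{b+1}-1=F(\langle s_{0},s_{1}\rangle)$, so every generator $s_{i}$ with $i\geq 2$ already lies in $\langle s_{0},s_{1}\rangle$, giving $SF(b,n)=\langle s_{0},s_{1}\rangle$.

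Granting the theorem, $\mathrm{Ap}(SF(b,n),s_{0})$ is the standard Apéry set of a two-generated semigroup: the $s_{0}$ elements $0,s_{1},\dots,(s_{0}-1)s_{1}$ are pairwise incongruent modulo $s_{0}$ (because $s_{1}$ is a unit mod $s_{0}$) and each lies in $SF(b,n)$, while $ks_{1}-s_{0}\notin SF(b,n)$ for $0\leq k\leq s_{0}-1$: a representation $ks_{1}-s_{0}=\alpha s_{0}+\beta s_{1}$ with $\alpha,\beta\geq 0$ would give $s_{0}\mid k-\beta$ with $0\leq k-\beta<s_{0}$ (note $\beta\leq k$, since $\beta>k$ makes the left side negative), hence $\beta=k$ and $0=(\alpha+1)s_{0}$, a contradiction. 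As the Apéry set has cardinality $s_{0}$, this forces $\mathrm{Ap}(SF(b,n),s_{0})=\{0,s_{1},\dots,(s_{0}-1)s_{1}\}$, and $s_{0}-1=b^{b^{n}}$ puts it in the displayed form.

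Finally I would feed this into Lemma~\ref{lem_F_g}. The maximum of the Apéry set is $(s_{0}-1)s_{1}$, so
\[
F(SF(b,n))=(s_{0}-1)s_{1}-s_{0}=(s_{0}-1)(s_{1}-1)-1=m\cdot m^{b}-1=b^{(b+1)b^{n}}-1,
\]
using $b^{n}+b^{n+1}=(b+1)b^{n}$; and since $\sum_{w\in\mathrm{Ap}(SF(b,n),s_{0})}w=s_{1}\sum_{k=0}^{s_{0}-1}k=s_{1}\frac{(s_{0}-1)s_{0}}{2}$,
\[
g(SF(b,n))=\frac{1}{s_{0}}\cdot s_{1}\cdot\frac{(s_{0}-1)s_{0}}{2}-\frac{s_{0}-1}{2}=\frac{(s_{0}-1)(s_{1}-1)}{2}=\frac{b^{(b+1)b^{n}}}{2}.
\]
There is no genuine obstacle: the only points needing care are the parity argument in step~(i) and the elementary inequality $b^{i}>b+1$ for $i\geq 2$ in step~(iii), after which passing to the two-element generating set collapses the Fermat-base semigroup to Sylvester's classical situation and the formulas drop out.
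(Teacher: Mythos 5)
Your proposal is correct, and in fact it supplies more than the paper does: Section~6 of the paper states Theorem~\ref{thm_fer_main} and this corollary without any proofs, so there is no argument of the author's to compare against. Your route is the natural one and is surely what is implicitly intended: reduce to the two-generator case via Theorem~\ref{thm_fer_main} (your steps (i)--(iii) -- coprimality from $b$ even, Sylvester's formula, and $s_i>F(\langle s_0,s_1\rangle)$ for $i\geq 2$ since $b^i>b+1$ -- are a sound proof of that theorem, which the corollary is in any case allowed to cite), then use the standard fact that $\mathrm{Ap}(\langle s_0,s_1\rangle,s_0)=\{0,s_1,\dots,(s_0-1)s_1\}$ together with Lemma~\ref{lem_F_g}. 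Your verification that $ks_1\in S$ while $ks_1-s_0\notin S$, plus the cardinality count, is complete, and the resulting formulas $F=(s_0-1)s_1-s_0=b^{(b+1)b^n}-1$ and $g=\frac{(s_0-1)(s_1-1)}{2}=\frac{1}{2}b^{(b+1)b^n}$ agree with the corollary's final expressions. One small point worth recording: the intermediate expression $\frac{1}{2}b^{b^n}(b^{b^n}-2)$ printed in the corollary is a typo (it should be $\frac{1}{2}b^{b^n}\cdot b^{b^{n+1}}$, as the subsequent equality shows), and your computation implicitly corrects it.
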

\begin{example}
Let us consider the case of Fermat number. We know that $\big< \{2^{2^{n+i}} + 1 | i \in \mathbb{N}\}\big> = \big< \{2^{2^{n}} + 1, 2^{2^{n+1}} + 1\}\big>$ and it implies that $\text{Ap}(SF(2,n), s_0) = \{0,s_1,2s_1,\cdots,(2^{2^{n}})s_1\}$. Hence $F(SF(2,n)) = 2^{3\cdot 2^{n}} - 1$ and $g(SF(2,n)) = 2^{3\cdot 2^{n} - 1}$.
\end{example}

\end{document}